\pgfplotsset{width=9cm,compat=1.5.1}
\newtheorem{theorem}{Theorem}
\newtheorem{definition}[theorem]{Definition}
\newtheorem{lemma}[theorem]{Lemma}
\newtheorem{corollary}[theorem]{Corollary}
\newtheorem{remark}[theorem]{Remark}
\newtheorem{proposition}[theorem]{Proposition}
\newtheorem{example}[theorem]{Example}
\newcommand{\be}{\mathbf e}
\newcommand{\bu}{\mathbf u}
\newcommand{\bv}{\mathbf v}
\newcommand{\bw}{\mathbf w}
\newcommand{\zero}{\mathbf 0}
\newcommand{\supp}{{\Phi}}
\newcommand\spec{{\mathrm{spec}}}
\def\R{\mathbb{R}}
\def\Q{\mathbb{Q}}
\def\Z{\mathbb{Z}}
\begin{document}










\title{Sedentary quantum walks on bipartite graphs}

\author{Karen Meagher\textsuperscript{1}, Hermie Monterde\textsuperscript{2}}

\maketitle

\begin{abstract}
If a quantum walk starting on a vertex tends to stay at home, then that vertex is said to be sedentary. We prove that almost all planar graphs and almost all trees contain at least two sedentary vertices for any assignment of edge weights --- a result that suggests vertex sedentariness is a common phenomenon in trees and planar graphs. For weighted bipartite graphs, we show that a vertex is not sedentary whenever 0 does not belong to its eigenvalue support. Consequently, each vertex in a nonsingular weighted bipartite graph is not sedentary, a stark contrast to weighted trees and weighted planar graphs. A corollary of this result is that every vertex in a bipartite graph with a unique perfect matching is not sedentary for any assignment of edge weights. We also construct new families of weighted bipartite graphs with sedentary vertices using the bipartite double and subdivision operations. Finally, we show that unweighted paths and unweighted even cycles contain no sedentary vertices.
\end{abstract}

\noindent \textbf{Keywords:} graph spectra, quantum walk, bipartite graph, tree, sedentary vertex, adjacency matrix\\
	
\noindent \textbf{MSC2010 Classification:} 
81P45; 
05C50; 
15A18 

\addtocounter{footnote}{1}
\footnotetext{Department of Mathematics and Statistics, University of Regina, SK, Canada S4S 0A2, Karen.Meagher@uregina,ca}
\addtocounter{footnote}{1}
\footnotetext{Department of Mathematics and Statistics, University of Regina, SK, Canada S4S 0A2, Hermie.Monterde@uregina.ca}

\medskip

\section{Introduction}

Let $A$ be the adjacency matrix of a graph $X$. A \textit{continuous-time quantum walk} on $X$ is determined by the complex symmetric unitary matrix 
\begin{equation*}
    U(t)=e^{itA}\quad t\in\R,
\end{equation*}
called the \textit{transition matrix} of $X$. In \cite{Monterde2023}, the notion of a sedentary vertex in a quantum walk was formalized: vertex $u$ in $X$ is sedentary if $|U(t)_{u,u}|$ is bounded away from 0 for all $t>0$. Simply put, a vertex in a graph is sedentary if its associated quantum state tends to stay at that vertex. It was shown that Cartesian products preserve vertex sedentariness, vertices with at least two twins are sedentary, and there are strongly cospectral vertices that are sedentary. Building on these results, a subsequent work \cite{monterde2023new} by the second author characterized sedentary twin vertices, proved that direct products and joins preserve vertex sedentariness under mild conditions, and determined all complete bipartite and threshold graphs that are sedentary. It was also shown that a vertex with a twin is either sedentary or admits pretty good state transfer, which is an interesting observation owing to the fact that sedentariness and pretty good state transfer are mutually exclusive quantum phenomena. However, it turns out that this dichotomous property for twin vertices does not hold in general, as demonstrated by infinite families of twin-free graphs whose every vertex exhibits neither of these two properties. Furthermore, a characterization of vertex sedentariness was given in terms of pretty good group state transfer, a new type of quantum transport first introduced in \cite{monterde2023new} and generalizes the notion of group state transfer \cite{Brown2021}.

In this paper, we report further progress on the theory of vertex sedentariness, with emphasis on bipartite graphs, trees, and planar graphs. In Section \ref{Sec:st}, we review a few basic properties of quantum state transfer on graphs. In Section \ref{sec:bip} we show that a vertex in a bipartite graph is not sedentary whenever 0 is not in its eigenvalue support (Theorem \ref{thm:bipartite}). Consequently, a nonsingular bipartite graph has no sedentary vertex (Corollary \ref{cor:intbipgraphs}). A corollary of this result is that a weighted bipartite graph with a unique perfect matching has no sedentary vertices for any assignment of edge weights (Theorem \ref{thm:bipeveneval}). In Section \ref{sec:bip1}, we provide sufficient number-theoretic conditions such that a vertex in a bipartite graph is not sedentary whenever $0$ belongs to the eigenvalue support of that vertex. We deal with trees and planar graphs in Section \ref{Sec:tree}, where we prove that almost all connected planar graphs (resp., trees) contain at least two sedentary vertices for any assignment of edge weights (Corollary \ref{planar}) (resp., Corollary \ref{alltrees}). We complement this result by providing infinite families of trees and planar graphs that do not have sedentary vertices (see Corollaries \ref{subdstar} and \ref{c6}). We also provide new constructions of graphs with sedentary vertices using the bipartite double in Section \ref{Sec:bipdoub}, the subdivision operation in Section \ref{Sec:subd}, and the Cartesian and rooted products in Section \ref{Sec:cons1}. Section \ref{Sec:bip2} explores vertex sedentariness in bipartite graphs with few distinct eigenvalues. Furthermore, we show that unweighted paths and unweighted even cycles do not contain sedentary vertices in Section \ref{Sec:paths}. Finally, open questions are presented in Section \ref{Sec:open}. The remainder of the present section is allotted to basic definitions and notation relevant to the rest of the paper.

Unless otherwise stated, we assume that our graphs are simple (loopless), connected, weighted and undirected. Let $X$ be a graph with vertex set $V(X)$ and edge set $E(X)$. All edges in $X$ are assumed to have nonzero weights. We say that $X$ is \textit{unweighted} if all edge weights in $X$ are equal to one. For a given assignment of weights on $X$, $A=A(X)$ denotes the \textit{adjacency matrix} of $X$ defined entry-wise as $A_{u,v}=\omega_{u,v}$ if $u$ and $v$ are adjacent, and $A_{u,v}=0$ otherwise. Here, $\omega_{u,v}$ is the weight of the edge $\{u,v\}$. We say that $X$ is a \textit{nonsingular} graph if $A(X)$ is nonsingular. We denote the set of distinct eigenvalues of $A(X)$ by $\spec(X)$. Since $X$ is connected, $A$ has at least two distinct eigenvalues. Moreover, 
we may write $A$ in its spectral decomposition as
\begin{equation}
\label{specdec}
A=\sum_{\lambda\in\spec(X)}\lambda E_{\lambda},
\end{equation}
where $E_{\lambda}$ is the orthogonal projection matrix onto the eigenspace of $A$ associated with $\lambda$. Note that the matrices $E_\lambda$ are all real, idempotent, pairwise multiplicatively orthogonal, and sum to the identity matrix. Using (\ref{specdec}), we may write the transition matrix as
\begin{equation}
\label{specdecU}
U(t)=\sum_{\lambda\in\spec(X)}e^{it\lambda}E_{\lambda}.
\end{equation}
The \textit{eigenvalue support} of vertex $u$ in $X$, denoted $\supp_{\be_u}$, is the set $\supp_{\be_u}=\{\lambda\in\spec(X):E_{\lambda}\be_u\neq \textbf{0}\}.$ Two vertices $u$ and $v$ in $X$ are said to be \textit{cospectral} if $(E_{\lambda})_{u,u}=(E_{\lambda})_{v,v}$ for all $\lambda\in\supp_{\be_u}$. Note that $u$ and $v$ are cospectral if and only if $U(t)_{u,u}=U(t)_{v,v}$ for all $t\in\R$. 

For a simple unweighted graph $X$ with vertex $u$, let $N_X(u)$ denote the neighborhood of $u$ in $X$. Vertices $u$ and $v$ in $X$ are \textit{twins} if
\begin{equation*}
N_X(u)\backslash \{v\}=N_X(v)\backslash \{u\}.
\end{equation*}
A maximal subset $W$ of $V(X)$ is a \textit{twin set} in $X$ if every pair of vertices in $W$ are twins. In this case, the subgraph of $X$ induced by $W$ is either an empty graph or a clique. From \cite[Theorem 15]{monterde2023new}, 
\begin{equation}
\label{twins}
E_\lambda=\begin{bmatrix}
     I_{|W|}-\frac{1}{|W|}J_{|W|} & O\\
    O & O
     \end{bmatrix}+F
\end{equation}
is an orthogonal projection matrix for the eigenspace associated with $\lambda\in\spec(X)$, where $\lambda=\{0,-1\}$, $\lambda=-1$ if and only if the subgraph induced by $W$ is a clique, and $F$ is an orthogonal projection matrix such that $F(\be_a-\be_b)=0$ for all $a,b\in W$ and $F$ is absent if $\operatorname{rank}E_\lambda=|W|-1$. We now combine Corollary 12 and Theorem 15(2) in \cite{monterde2023new} into one lemma.

\begin{lemma}
\label{monterde}
Let $W$ be a twin set in $X$. Then the vertices in $W$ are either all sedentary, or all involved in pretty good state transfer with each other. Moreover, if $|W|\geq 3$, then each vertex in $W$ is sedentary.
\end{lemma}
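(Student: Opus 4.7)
My plan is to leverage a single structural identity: for any twins $a, b \in W$, the vector $\be_a - \be_b$ is an eigenvector of $A$ with eigenvalue $\mu$, where $\mu = 0$ if the induced subgraph on $W$ is empty and $\mu = -1$ if it is a clique. This is immediate from the twin condition $N_X(a)\setminus\{b\} = N_X(b)\setminus\{a\}$ together with whether the edge $\{a,b\}$ is present. Consequently $U(t)(\be_a - \be_b) = e^{it\mu}(\be_a - \be_b)$, so for every vertex $w$ and every pair $a, b \in W$,
\begin{equation*}
U(t)_{w,a} - U(t)_{w,b} = e^{it\mu}(\delta_{w,a} - \delta_{w,b}).
\end{equation*}
This single identity carries essentially all the information about twins I will need; in particular, taking $a = u$ and $b = v$ gives cospectrality of any two vertices of $W$, since $U(t)_{u,u} = U(t)_{v,v}$ then follows by also considering the entry at $w = v$.

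Fix $u \in W$. Taking $w = u$, $a = u$, $b = v \in W\setminus\{u\}$ yields $|U(t)_{u,u} - U(t)_{u,v}| = 1$ and hence $|U(t)_{u,u}| + |U(t)_{u,v}| \geq 1$ by the triangle inequality. Taking $w = u$ and $a, b \in W\setminus\{u\}$ distinct instead yields $U(t)_{u,a} = U(t)_{u,b}$, so the off-diagonal entries $U(t)_{u,v}$ for $v \in W\setminus\{u\}$ all share a common modulus $\beta_t$. Writing $\alpha_t := |U(t)_{u,u}|$, unitarity of $U(t)$ gives $\alpha_t^2 + (|W|-1)\beta_t^2 \leq 1$. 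Combining this with $\alpha_t + \beta_t \geq 1$ and eliminating $\beta_t$ produces a short algebraic inequality whose rearrangement yields the uniform lower bound
\begin{equation*}
|U(t)_{u,u}| \geq \frac{|W|-2}{|W|} \quad \text{for all } t \in \R,
\end{equation*}
which is strictly positive whenever $|W| \geq 3$. This establishes the second, stronger assertion of the lemma.

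For the first assertion only the case $|W| = 2$ remains, where the bound above is vacuous. Here a clean dichotomy applies directly: either $\inf_{t > 0}|U(t)_{u,u}| > 0$, so $u$ is sedentary, or there is a sequence $t_n > 0$ along which $|U(t_n)_{u,u}| \to 0$, in which case $|U(t_n)_{u,u} - U(t_n)_{u,v}| = 1$ forces $|U(t_n)_{u,v}| \to 1$, which is exactly pretty good state transfer between $u$ and $v$. Cospectrality of twins, observed above, guarantees that this dichotomy is consistent across both vertices of $W$, and in the $|W| \geq 3$ case it propagates sedentariness from $u$ to every vertex of $W$. The main technical point I expect to need care with is the algebra producing the $(|W|-2)/|W|$ bound and confirming that the inequalities are tight enough to rule out escape; everything else is a direct consequence of the single eigenvector identity in the first paragraph.
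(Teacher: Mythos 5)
Your proof is correct. Note that the paper does not actually prove this lemma; it imports it from \cite{monterde2023new} (Corollary 12 and Theorem 15(2)), where the argument goes through the explicit spectral projection recorded as equation (2) of this paper: for a twin set $W$ one has $(E_\mu)_{u,u}\geq 1-\frac{1}{|W|}$ for $\mu\in\{0,-1\}$, so for $|W|\geq 3$ Theorem 4(1) gives $|U(t)_{u,u}|\geq 2(E_\mu)_{u,u}-1\geq 1-\frac{2}{|W|}$, while the $|W|=2$ dichotomy is extracted from the identity $U(t)_{u,u}\pm U(t)_{u,v}=e^{it\mu}$ (the same identity this paper re-derives in the proof of Theorem 19(1b)). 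Your route reaches the same bound $\frac{|W|-2}{|W|}$ without ever invoking the projection matrices: you take the eigenvector $\be_a-\be_b$ as the sole input, deduce $U(t)_{u,u}-U(t)_{u,v}=e^{it\mu}$ and $U(t)_{u,a}=U(t)_{u,b}$ for $a,b\in W\setminus\{u\}$, and then close the argument with unitarity of row $u$; the elimination of $\beta_t$ from $\alpha_t+\beta_t\geq 1$ and $\alpha_t^2+(|W|-1)\beta_t^2\leq 1$ indeed yields $(|W|-1)(1-\alpha_t)\leq 1+\alpha_t$ and hence $\alpha_t\geq\frac{|W|-2}{|W|}$, which is sharp (it matches the known constant $1-\frac{2}{n}$ for $K_n$). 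The $|W|=2$ dichotomy and the use of cospectrality to make it consistent across $W$ are also fine; the only cosmetic point is that for $|W|\geq 3$ no propagation via cospectrality is needed, since your uniform bound already applies verbatim to every vertex of $W$. What your approach buys is a short, self-contained proof of a result the paper only cites; what it gives up is the finer information in (2) (the exact value of $(E_\mu)_{u,u}$ and the residual projection $F$), which the paper needs elsewhere, e.g.\ in Theorem 39(2).
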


We also state the following theorem due to Kronecker \cite[Chapter 3]{levitan1982almost}.

\begin{theorem}
\label{thm:kronecker}
Let $\lambda_1,\ldots,\lambda_r$ and $\zeta_1,\ldots,\zeta_r$ be real numbers. For each $\epsilon>0$, the system of inequalities
\begin{equation}
\label{ineq}
|\lambda_jy-\zeta_j|<\epsilon\quad \text{(mod $2\pi$)},\quad j=1,2,\ldots,r
\end{equation}
has a solution $y$ if and only if for all integers $\ell_1,\ldots,\ell_r$,
\begin{center}
$\displaystyle\sum_{j=1}^r\ell_j\lambda_j=0\quad $ implies that $\quad \displaystyle\sum_{j=1}^r\ell_j\zeta_j\equiv 0$ (mod $2\pi$).
\end{center}
\end{theorem}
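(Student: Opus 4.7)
The plan is to handle necessity and sufficiency separately. The forward (necessity) direction is a direct computation: given a solution $y$ and integers $k_1,\ldots,k_r$ with $|\lambda_j y - \zeta_j - 2\pi k_j| < \epsilon$ for each $j$, I multiply by $\ell_j$, sum over $j$, and invoke $\sum_j \ell_j \lambda_j = 0$ to obtain
\begin{equation*}
\Bigl|\sum_j \ell_j \zeta_j + 2\pi \sum_j \ell_j k_j\Bigr| < \epsilon \sum_j |\ell_j|.
\end{equation*}
Since $\sum_j \ell_j k_j \in \Z$ and $\sum_j |\ell_j|$ is fixed, letting $\epsilon \to 0$ forces $\sum_j \ell_j \zeta_j$ to be arbitrarily well approximated by integer multiples of $2\pi$, hence $\sum_j \ell_j \zeta_j \equiv 0 \pmod{2\pi}$.

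For the reverse (sufficiency) direction, I would pass to the torus $\mathbb{T}^r := (\R / 2\pi \Z)^r$ and study the one-parameter subgroup
\begin{equation*}
H_0 = \{(\lambda_1 y,\ldots,\lambda_r y) \bmod 2\pi : y \in \R\}.
\end{equation*}
Let $H = \overline{H_0}$, a closed (and connected) subgroup of $\mathbb{T}^r$. The entire task reduces to proving that the class of $(\zeta_1,\ldots,\zeta_r)$ in $\mathbb{T}^r$ lies in $H$, since any element of $H$ can be approximated to within $\epsilon$ in each coordinate by some $(\lambda_1 y, \ldots, \lambda_r y) \bmod 2\pi$. Using the character theory of the torus, whose dual group is canonically $\Z^r$, a character $\ell \in \Z^r$ annihilates $H$ if and only if $\sum_j \ell_j \lambda_j y \equiv 0 \pmod{2\pi}$ for every $y \in \R$, which holds if and only if $\sum_j \ell_j \lambda_j = 0$. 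The hypothesis then says exactly that $(\zeta_1,\ldots,\zeta_r)$ lies in the kernel of every such character, and the double-annihilator identity yields $(\zeta_1,\ldots,\zeta_r) \in H$.

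The main obstacle is the double-annihilator step: that a closed subgroup of $\mathbb{T}^r$ is recovered as the intersection of the kernels of the characters vanishing on it. In full generality this is Pontryagin duality for compact abelian groups, but on the torus one can argue more concretely from the structure theorem that classifies closed subgroups of $\mathbb{T}^r$ as finite disjoint unions of cosets of a sub-torus cut out by integer linear equations; once one has that description, matching these equations against the hypothesis is immediate. A second route avoids duality entirely and uses Weyl's equidistribution theorem for the flow $y \mapsto (\lambda_1 y,\ldots,\lambda_r y)$ on $\mathbb{T}^r$: the orbit closure is the support of the unique invariant probability measure arising from time-averaging, and a short Fourier calculation identifies this support through the same linear relations on the $\lambda_j$ that appear in the hypothesis. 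Either path ultimately rests on characters separating points on $\mathbb{T}^r$, but the duality framework keeps the bookkeeping cleanest.
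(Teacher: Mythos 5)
Your argument is correct, but note that the paper does not prove this statement at all: it is quoted as Kronecker's theorem with a citation to Levitan--Zhikov, so there is no in-paper proof to compare against. Your necessity direction is complete as written --- summing the inequalities $|\lambda_j y - \zeta_j - 2\pi k_j| < \epsilon$ against the weights $\ell_j$ and using $\sum_j \ell_j\lambda_j = 0$ shows the distance from $\sum_j \ell_j\zeta_j$ to $2\pi\Z$ is below $\epsilon\sum_j|\ell_j|$ for every $\epsilon$, hence zero. The sufficiency direction is the standard duality proof, and your reductions are all sound: the closure $H$ of the one-parameter subgroup is a closed connected subgroup of $\mathbb{T}^r$, its annihilator in $\Z^r$ is exactly $\{\ell : \sum_j \ell_j\lambda_j = 0\}$ (the only point needing a word is that a continuous real-valued function of $y$ taking values in $2\pi\Z$ and vanishing at $y=0$ is identically zero), and the hypothesis says precisely that $(\zeta_1,\ldots,\zeta_r)$ lies in the annihilator of that annihilator. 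You correctly isolate the one genuinely nontrivial input, $H^{\perp\perp} = H$ for closed subgroups of $\mathbb{T}^r$; since $H$ here is connected, you only need the case of subtori, for which the structure theorem (a subtorus is cut out by the integer linear forms vanishing on it) is the cleanest route, and your alternative via Weyl equidistribution also works once one knows the orbit closure is the support of the time-average measure. In a full write-up you would have to either prove or precisely cite that classification, but as a proof strategy nothing is missing or incorrect.
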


\begin{remark}
If the eigenvalues $\lambda_j$ are linearly independent over $\Q$ or each $\zeta_j\equiv 0$ (mod $2\pi$), then (\ref{ineq}) has a solution.
\end{remark}

\section{State transfer}\label{Sec:st}

In this section, we review a few basic properties of state transfer on graphs. Let $u$ and $v$ be two vertices in a graph $X$. We say that $u$ and $v$ admit \textit{perfect state transfer} (PST) in $X$ if $|U(t_0)_{u,v}|=1$ for some $t_0>0$. A relaxation of PST is \textit{pretty good state transfer} (PGST), which happens if there exists a sequence $\{\tau_k\}$ such that $\lim_{k\rightarrow\infty}|U(t_k)_{u,v}|=1$. We say that $u$ is \textit{periodic} in $X$ if $|U(t_0)_{u,u}|=1$ for some $t_0>0$. For a periodic vertex $u$, the smallest $t_0>0$ such that $|U(t_0)_{u,u}|=1$ is called its \textit{minimum period}, this value is usually denoted by $\rho$. A set $\mathcal{S}\subset \mathbb{R}$ with at least two elements satisfies the \textit{ratio condition} if $\frac{\lambda-\theta}{\mu-\eta}\in\Q$ for any $\lambda,\theta,\mu,\eta\in \mathcal{S}$ with $\mu\neq \eta$. Note that if $|\mathcal{S}|=2$, then $\mathcal{S}$ automatically satisfies the ratio condition. The following characterization of periodicity of a vertex due to Godsil \cite{Godsil2011}.

\begin{theorem}
\label{thm:rc}
Vertex $u$ is periodic in $X$ if and only if $\supp_{\be_u}$ satisfies the ratio condition. 
\end{theorem}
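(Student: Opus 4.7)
The plan is to analyze $U(t)_{u,u}$ through the spectral expression coming from (\ref{specdecU}): since the coefficients $(E_\lambda)_{u,u}=\|E_\lambda\be_u\|^2$ are strictly positive on $\supp_{\be_u}$, zero off it, and sum to $1$, we may write
\[U(t)_{u,u}=\sum_{\lambda\in\supp_{\be_u}} e^{it\lambda}(E_\lambda)_{u,u}\]
as a convex combination of unit-modulus complex numbers. Periodicity at $u$ then translates into a question of when such a convex combination itself attains modulus $1$.

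For the forward implication, suppose $|U(t_0)_{u,u}|=1$ with $t_0>0$. The equality case of the triangle inequality for a convex combination of unimodular numbers with strictly positive weights forces $e^{it_0\lambda}=e^{it_0\mu}$ for all $\lambda,\mu\in\supp_{\be_u}$, so every difference $t_0(\lambda-\mu)$ lies in $2\pi\Z$. Given any $\lambda,\theta,\mu,\eta\in\supp_{\be_u}$ with $\mu\neq\eta$, the quantity $t_0(\mu-\eta)$ is a \emph{nonzero} integer multiple of $2\pi$ (nonzero because $t_0>0$), while $t_0(\lambda-\theta)$ is another integer multiple of $2\pi$, so the ratio $(\lambda-\theta)/(\mu-\eta)$ is rational, yielding the ratio condition.

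For the reverse implication, I would first dispose of the degenerate case $|\supp_{\be_u}|=1$, where $U(t)_{u,u}=e^{it\lambda}$ is automatically unimodular, and then assume $|\supp_{\be_u}|\geq 2$. Fix distinct $\lambda_1,\lambda_2\in\supp_{\be_u}$; by the ratio condition, each $\lambda\in\supp_{\be_u}$ satisfies $(\lambda-\lambda_1)/(\lambda_2-\lambda_1)=p_\lambda/q_\lambda$ for some $p_\lambda\in\Z$ and $q_\lambda\in\Z_{>0}$. Setting $Q=\operatorname{lcm}\{q_\lambda:\lambda\in\supp_{\be_u}\}$ and $t_0=|2\pi Q/(\lambda_2-\lambda_1)|$, each $t_0(\lambda-\lambda_1)$ lies in $2\pi\Z$, so every $e^{it_0\lambda}$ coincides with $e^{it_0\lambda_1}$, and hence $U(t_0)_{u,u}=e^{it_0\lambda_1}$ has modulus $1$.

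The main obstacle is the forward direction, where the key is invoking the strict equality case of the triangle inequality to force distinct unimodular points to coincide. It is tempting to use Kronecker's theorem (Theorem \ref{thm:kronecker}) here, but Kronecker would only produce approximate periodicity from linear relations among eigenvalues, whereas exact periodicity requires genuine commensurability of the differences in $\supp_{\be_u}$; this is precisely what the rationality built into the ratio condition delivers, and it is what makes the LCM construction in the reverse direction succeed.
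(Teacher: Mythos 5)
The paper does not prove Theorem \ref{thm:rc}; it is quoted as a known result of Godsil. Your argument is correct and is essentially the standard proof of that result: writing $U(t)_{u,u}$ as a convex combination of unimodular numbers with strictly positive weights on $\supp_{\be_u}$, using the equality case of the triangle inequality to force $t_0(\lambda-\mu)\in 2\pi\Z$ in the forward direction, and the lcm-of-denominators construction in the reverse direction. No gaps.
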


Next, we state the formal definition of a sedentary vertex \cite{Monterde2023}.

\begin{definition}
\label{def}
Let $X$ be a graph and $u\in V(X)$. Vertex $u$ is \textit{$C$-sedentary} in $X$ if
\begin{equation}
\label{inf}
\inf_{t>0}\lvert U(t)_{u,u}\rvert\geq C
\end{equation}
for some constant $0<C\leq 1$. If the $C$ in (\ref{inf}) is attained by $\lvert U(t)_{u,u}\rvert$ at some time $t=t_0$, then vertex $u$ is $C$-sedentary in $X$ with equality at $t=t_0$. Vertex $u$ is \textit{not sedentary} in $X$  if $\inf_{t>0}\lvert U(t)_{u,u}\rvert=0$. Further, $X$ is $C$-sedentary (resp., not sedentary) if each vertex in $X$ is $C$-sedentary (resp., not sedentary).
\end{definition}

\begin{figure}[h!]
	\begin{center}
		\begin{tikzpicture}
		\tikzset{enclosed/.style={draw, circle, inner sep=0pt, minimum size=.2cm}}	   
	   \node[enclosed, label={left, yshift=0cm: $u$}] (w_1) at (1.5,3.7) {};
	    \node[enclosed, fill=black] (w_2) at (0,2.5) {};
		\node[enclosed, fill=black] (w_3) at (1.5,2.5) {};
		\node[enclosed, fill=black] (w_4) at (3,2.5) {};
		\node[enclosed, fill=black] (w_6) at (5,2.5) {};
		\draw (w_1) -- (w_3);
		\draw (w_2) -- (w_3);
		\draw (w_4) -- (w_3);
		\draw (w_4) -- node[above] {$\overbrace{}^{n-2\ \text{vertices}}$} (w_6)[dashed];
		\end{tikzpicture}
	\end{center}
	\caption{The graph $G_n$ with pendent vertex $u$}\label{yay1}
\end{figure}
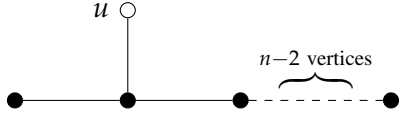

If $C$ is not important, then we simply say \textit{sedentary} in lieu of $C$-sedentary. Sedentariness of vertex $u$ implies that $\lvert U(t)_{u,u}\rvert$ is bounded away from $0$. Physically speaking, this means that the quantum state initially at vertex $u$ tends to stay at $u$. Consequently, vertex sedentariness and PGST are mutually exclusive types of quantum transport \cite[Prop 23]{Monterde2023}. The most common example of a sedentary graph is the complete graph $K_n$, which is $\big(1-\frac{2}{n}\big)$-sedentary for all $n\geq 3$ with equality at $t=\frac{\pi}{n}$. We also note that if $u$ and $v$ are cospectral vertices, then $u$ is sedentary if and only if $v$ is.

The concept of a sedentary vertex was first introduced by Godsil in \cite{SedQW}, although his notion of a $C$-sedentary vertex was defined relative to a family of graphs $\mathcal{F}$, where $C$ was required to be a function of the number of vertices of $G\in \mathcal{F}$ such that $C(n)\rightarrow 1$ as $n\rightarrow\infty$. One such example is the family of complete graphs $K_n$ for all $n\geq 3$ with $C(n)=\frac{n-1}{n}$. However, this definition was a bit restrictive, since there are families of graphs that contain vertices that are $C$-sedentary (in the sense of Definition \ref{def}), but $C(n)\rightarrow 0$ as $n\rightarrow\infty$. Such families are called \textit{quasi-sedentary} \cite{Monterde2023}. An example of a quasi-sedentary family is the family $G_n$ of graphs, where each $G_n$ is obtained from an odd path $P_n$ by a new pendent vertex (say vertex $u$) attached to a neighbor of an end vertex in $P_n$ (see Figure \ref{yay1}). In this family, vertex $u$ is $\frac{1}{n}$-sedentary in $G_n$ for all odd $n\geq 3$ \cite[Example 2]{monterde2023new}, in which case $C(n)=\frac{1}{n}\rightarrow 0$ as $n\rightarrow\infty$.

The following result is taken from Lemmas 9 and 12 \cite{Monterde2023}.

\begin{lemma}
\label{lem:persed}
Let $u$ be a periodic vertex in $X$. Then $u$ is $C$-sedentary if and only if $U(t)_{u,u}\neq 0$ for all $t\in[0,\rho]$. In this case, $C=\min_{t\in[0,\rho]} |U(t)_{u,u}|$ and $u$ is $C$-sedentary with equality at some $t_0\in [0,\rho]$ .
\end{lemma}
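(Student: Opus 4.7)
The plan is to exploit periodicity to reduce the infimum over the noncompact interval $(0,\infty)$ to a minimum over the compact interval $[0,\rho]$, after which both directions follow from elementary properties of continuous functions on compact sets.

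First I would establish the key periodicity of the diagonal map $t\mapsto |U(t)_{u,u}|$. Since $u$ is periodic with minimum period $\rho$, we have $|U(\rho)_{u,u}|=1$, which forces $U(\rho)\be_u=\gamma \be_u$ for some unimodular scalar $\gamma\in\C$ (as $U(\rho)$ is unitary, equality in $|\be_u^T U(\rho)\be_u|\le \|U(\rho)\be_u\|=1$ occurs only when $U(\rho)\be_u$ is a scalar multiple of $\be_u$). Combining this with $U(t+\rho)=U(t)U(\rho)$ gives
\begin{equation*}
U(t+\rho)_{u,u}=\be_u^T U(t)U(\rho)\be_u=\gamma\,U(t)_{u,u},
\end{equation*}
so $|U(t+\rho)_{u,u}|=|U(t)_{u,u}|$ for all $t\in\R$. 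Hence $|U(\,\cdot\,)_{u,u}|$ is $\rho$-periodic on $\R$.

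Using this periodicity and the fact that $|U(0)_{u,u}|=1$, I would then argue
\begin{equation*}
\inf_{t>0}|U(t)_{u,u}|=\inf_{t\in[0,\rho]}|U(t)_{u,u}|=\min_{t\in[0,\rho]}|U(t)_{u,u}|,
\end{equation*}
where the last equality holds because $|U(\,\cdot\,)_{u,u}|$ is continuous and $[0,\rho]$ is compact, so the infimum is attained at some $t_0\in[0,\rho]$. Call this minimum value $C$.

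With this in hand, both directions of the equivalence are immediate. If $U(t)_{u,u}\neq 0$ for all $t\in[0,\rho]$, then $C>0$ by continuity and compactness, so $\inf_{t>0}|U(t)_{u,u}|=C>0$ and $u$ is $C$-sedentary with equality at $t_0$. Conversely, if $u$ is $C$-sedentary for some $C>0$, then $|U(t)_{u,u}|\ge C>0$ for all $t>0$, and since also $|U(0)_{u,u}|=1$, we conclude $U(t)_{u,u}\neq 0$ throughout $[0,\rho]$. The only mild subtlety is the initial step of showing $U(\rho)\be_u=\gamma\be_u$ from periodicity of the vertex; once that is in place, the rest of the argument is routine.
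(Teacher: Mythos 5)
Your proof is correct; the paper itself gives no argument for this lemma (it is imported from Lemmas 9 and 12 of \cite{Monterde2023}), and your route --- deducing $U(\rho)\be_u=\gamma\be_u$ from equality in Cauchy--Schwarz, hence $\rho$-periodicity of $t\mapsto|U(t)_{u,u}|$, and then reducing the infimum to a minimum over the compact interval $[0,\rho]$ --- is exactly the standard argument used in that source. No gaps.
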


The above lemma implies that for a periodic vertex $u$, we can find the least $C$ such that $u$ is $C$-sedentary and the $C$ in (\ref{inf}) is attained by $\lvert U(t)_{u,u}\rvert$ at some time $t=t_0$. However, this is not necessarily the case for vertices that are not periodic. For example, vertex $u$ in the graph $G_5$ in Figure \ref{yay1} is not periodic because $\supp_{\be_u}=\left\{0,\pm\sqrt{(5\pm\sqrt{5})/2}\right\}$ does not satisfy the ratio condition in Theorem \ref{thm:rc}. Nonetheless, vertex $u$ is $C$-sedentary where $C=\frac{1}{5}$. While this value $C$ is not attained by $\lvert U(t)_{u,u}\rvert$ for any time $t$, the arguments in \cite[Example 2]{monterde2023new} imply that there is a sequence $\{t_k\}$ such that $\displaystyle\lim_{k\rightarrow\infty}\lvert U(t)_{u,u}\rvert=C$. Hence, we have that $\inf_{t>0}\lvert U(t)_{u,u}\rvert=C$.

Combining \cite[Corollary 13(1)]{Monterde2023} and \cite[Lemma 9]{monterde2023new}, we get
that vertex $u$ in $X$ is sedentary whenever the $u$th diagonal entry of a single orthogonal projection matrix is sufficiently large. Here we use the notation $\nu_2(a)$ for the largest power of $2$ that divides an integer $a$.

\begin{theorem}
\label{thm:sed}
Suppose $\theta\in \supp_{\be_u}$. The following hold.
\begin{enumerate}
\item If $(E_{\theta})_{u,u}>\frac{1}{2}$, 
then $u$ is sedentary with $|U(t)_{u,u}|\geq 2(E_{\theta})_{u,u}-1$. 
\item Let $(E_{\theta})_{u,u}=\frac{1}{2}$ 
Then $u$ is not sedentary if and only if for any tuple of integers $(\ell_\lambda)_{\lambda\in\supp_{\be_u}\backslash\{\theta\}}$,
\begin{equation}
\label{kr}
\displaystyle\sum_{\lambda\in\supp_{\be_u}\backslash\{\theta\}}\ell_\lambda(\lambda-\theta)=0\quad  \text{implies that} \quad \displaystyle\sum_{\lambda\in\supp_{\be_u}\backslash\{\theta\}}\ell_\lambda\quad \text{is even}.
\end{equation}
Additionally, if $u$ is periodic and $\supp_{\be_u}$ is closed under taking algebraic conjugates, then $u$ is not sedentary if and only if the values of $\nu_2(\frac{\lambda-\theta}{\sqrt{\Delta}})$ are equal for all $\lambda\in\supp_{\be_u}\backslash\{\theta\}$ (if $\supp_{\be_u}\subseteq\Z$ then $\Delta=1$, otherwise $\Delta$ is a square-free integer, greater than 1, that can be determined from $ \supp_{\be_u}$).

\end{enumerate}
\end{theorem}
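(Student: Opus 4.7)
The plan is to handle the two parts of Theorem~\ref{thm:sed} separately, using the spectral decomposition in (\ref{specdecU}) together with the fact that $\sum_{\lambda \in \spec(X)}(E_\lambda)_{u,u} = 1$ (since the idempotents sum to the identity).

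For Part 1, I would write
\begin{equation*}
e^{-it\theta}\,U(t)_{u,u} \;=\; (E_\theta)_{u,u} \;+\; \sum_{\lambda \in \supp_{\be_u} \setminus \{\theta\}} e^{it(\lambda-\theta)}(E_\lambda)_{u,u},
\end{equation*}
and apply the triangle inequality. Since $(E_\lambda)_{u,u} \geq 0$ for all $\lambda$ and $\sum_{\lambda\neq\theta}(E_\lambda)_{u,u} = 1 - (E_\theta)_{u,u}$, we obtain
\begin{equation*}
|U(t)_{u,u}| \;\geq\; (E_\theta)_{u,u} - (1-(E_\theta)_{u,u}) \;=\; 2(E_\theta)_{u,u}-1.
\end{equation*}
The hypothesis $(E_\theta)_{u,u} > 1/2$ then forces this lower bound to be positive for all $t$, giving sedentariness.

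For Part 2, when $(E_\theta)_{u,u}=1/2$ the bound above degenerates to $0$, so a more careful analysis is needed. The key observation is that $|U(t)_{u,u}|$ can be made arbitrarily small precisely when the sum $\sum_{\lambda\neq\theta} e^{it(\lambda-\theta)}(E_\lambda)_{u,u}$ can be pushed arbitrarily close to $-\tfrac12$. Because the coefficients $(E_\lambda)_{u,u}$ are nonnegative and sum to $\tfrac12$, this extremal value is attained only when every phase $e^{it(\lambda-\theta)}$ approaches $-1$, i.e.\ $t(\lambda-\theta) \equiv \pi \pmod{2\pi}$ simultaneously for all $\lambda \in \supp_{\be_u}\setminus\{\theta\}$. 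I would then invoke Theorem~\ref{thm:kronecker} with $\lambda_j = \lambda_j - \theta$ and $\zeta_j = \pi$: such a simultaneous approximation exists if and only if every integer relation $\sum_\lambda \ell_\lambda(\lambda-\theta) = 0$ forces $\pi\sum_\lambda \ell_\lambda \equiv 0 \pmod{2\pi}$, which is exactly the parity condition (\ref{kr}).

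For the final ``additionally'' statement, I would use that periodicity of $u$ gives the ratio condition on $\supp_{\be_u}$ (Theorem~\ref{thm:rc}), so that the differences $\lambda-\theta$ all lie in a common rank-one $\Q$-subspace after scaling by a fixed quadratic irrationality. Closure under algebraic conjugation pins down this irrationality as $\sqrt{\Delta}$ for a square-free $\Delta$ (or $\Delta=1$ in the rational case), so each $(\lambda-\theta)/\sqrt{\Delta}$ is a rational (in fact, an algebraic integer divided by a common denominator). Integer linear relations among these quotients can then be analyzed via $2$-adic valuations: the parity obstruction in (\ref{kr}) translates into the statement that no integer combination summing to an odd total can annihilate the differences, which is equivalent, by a standard $\nu_2$ argument, to all the $\nu_2\!\left(\frac{\lambda-\theta}{\sqrt{\Delta}}\right)$ being equal. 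The main obstacle, and the step requiring the most care, is this last translation: one must verify that the ratio condition plus Galois-invariance of the eigenvalue support really do license the reduction from Kronecker's criterion to the uniform $2$-adic valuation condition, which is where I would lean most heavily on the arguments of \cite[Lemma~9]{monterde2023new} rather than reproving them from scratch.
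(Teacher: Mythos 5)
The paper does not prove this theorem itself; it imports it by citation from \cite{Monterde2023} and \cite{monterde2023new}, so there is no in-paper proof to compare against. Your reconstruction is correct and is the standard argument: the reverse triangle inequality for part 1, and for part 2 the observation that $|U(t)_{u,u}|=\bigl|\tfrac12+\sum_{\lambda\neq\theta}e^{it(\lambda-\theta)}(E_\lambda)_{u,u}\bigr|$ can only vanish in the limit when every phase tends to $-1$ (since the coefficients are strictly positive and sum to $\tfrac12$), which Kronecker's theorem with $\zeta_j=\pi$ converts into exactly the parity condition (\ref{kr}); deferring the $\nu_2$ reformulation to the cited lemma is reasonable, as that equivalence is a separate number-theoretic fact about integer relations among quantities with equal or unequal $2$-adic valuations.
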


Let $m_\theta$ denote the multiplicity of $\theta\in\spec(X)$.

\begin{corollary}
\label{cor8} 
If $m_\theta>\frac{n}{2}$, or $m_\theta=\frac{n}{2}$ and $(E_\theta)_{u,u}<\frac{1}{2}$ for some $u\in V(X)$, then $X$ has a sedentary vertex.
\end{corollary}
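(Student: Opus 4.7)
The plan is a short pigeonhole argument combined with Theorem \ref{thm:sed}(1). The key observation is that $E_\theta$ is an orthogonal projection onto the $\theta$-eigenspace of dimension $m_\theta$, so
\begin{equation*}
\sum_{v\in V(X)}(E_\theta)_{v,v}=\tr(E_\theta)=m_\theta.
\end{equation*}
Theorem \ref{thm:sed}(1) tells us that it is enough to produce one vertex $v$ with $(E_\theta)_{v,v}>\tfrac{1}{2}$. So the whole task is to show that the trace condition together with the hypothesis forces such a vertex to exist.

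For the first case, assume $m_\theta>\tfrac{n}{2}$. If every diagonal entry $(E_\theta)_{v,v}$ were at most $\tfrac{1}{2}$, then summing over all $n$ vertices would give $m_\theta\leq \tfrac{n}{2}$, contradicting the hypothesis. Hence some $v\in V(X)$ satisfies $(E_\theta)_{v,v}>\tfrac{1}{2}$, and by Theorem \ref{thm:sed}(1) that vertex is sedentary. For the second case, assume $m_\theta=\tfrac{n}{2}$ and $(E_\theta)_{u,u}<\tfrac{1}{2}$ for some $u$. Then
\begin{equation*}
\sum_{v\neq u}(E_\theta)_{v,v}=m_\theta-(E_\theta)_{u,u}>\frac{n}{2}-\frac{1}{2}=\frac{n-1}{2}.
\end{equation*}
Since this is a sum of $n-1$ nonnegative terms whose average exceeds $\tfrac{1}{2}$, at least one of the terms must exceed $\tfrac{1}{2}$, and Theorem \ref{thm:sed}(1) again delivers a sedentary vertex.

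I don't anticipate any real obstacle: the argument is essentially averaging the diagonal of a projection matrix, and all the heavy lifting (the inequality $|U(t)_{u,u}|\geq 2(E_\theta)_{u,u}-1$) has already been done in Theorem \ref{thm:sed}(1). The only minor point to be careful about is the strictness of the inequalities in the hypotheses, which is exactly what guarantees the strict lower bound $(E_\theta)_{v,v}>\tfrac{1}{2}$ needed to invoke Theorem \ref{thm:sed}(1).
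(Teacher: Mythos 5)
Your proof is correct and follows essentially the same route as the paper: both use $\tr(E_\theta)=m_\theta$ and an averaging/pigeonhole argument on the nonnegative diagonal entries of the projection to produce a vertex $v$ with $(E_\theta)_{v,v}>\tfrac{1}{2}$, then invoke Theorem \ref{thm:sed}(1). No issues.
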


\begin{proof}
Let $\theta\in \supp_{\be_u}$. Since the trace of $E_{\theta}$ equals 
$m_\theta$, we get $\frac{1}{n}\sum_{u\in V(X)}(E_{\theta})_{u,u}=\frac{m_\theta}{n}$. So if $m_\theta>\frac{n}{2}$, then $\frac{1}{n}\sum_{u\in V(X)}(E_{\theta})_{u,u}>\frac{1}{2}$. In this case,  $(E_{\theta})_{v,v}>\frac{1}{2}$ for some $v\in V(X)$.
Now, if $m_\theta=\frac{n}{2}$, then $\frac{1}{n}\sum_{u\in V(X)}(E_{\theta})_{u,u}=\frac{1}{2}$. In this case, if $(E_\theta)_{u,u}<\frac{1}{2}$ for some $u\in V(X)$, then it must be that $(E_\theta)_{v,v}>\frac{1}{2}$ for some vertex $v\neq u$. Otherwise, $\frac{1}{n}\sum_{u\in V(X)}(E_{\theta})_{u,u}<\frac{1}{2}$, a contradiction. In both cases,  $(E_{\theta})_{v,v}>\frac{1}{2}$ for some $v\in V(X)$. Applying Theorem \ref{thm:sed}(1) yields the desired result.
\end{proof}

\section{Nonsingular bipartite graphs}
\label{sec:bip}


A weighted bipartite graph $X$ has adjacency matrix of the form
\begin{equation*}
A=\begin{bmatrix}
     O & B\\
    B^T & O
     \end{bmatrix},
\end{equation*}
for some rectangular real matrix $B$. It is known that $\lambda$ is an eigenvalue of $A$ with eigenvector $\begin{bmatrix} \bu \\  \bv\end{bmatrix}$ if and only if $-\lambda$ an eigenvalue of $A$ with eigenvector $\begin{bmatrix} \bu \\  -\bv\end{bmatrix}$. The following is immediate.

\begin{lemma}
\label{lem:bipesupp}
For any vertex $u$ of a weighted bipartite $X$, $\supp_{\be_u}$ is closed under multiplication by $-1$ and $(E_{\theta})_{u,u}=(E_{-\theta})_{u,u}$ for all $\theta\in\spec(X)\backslash\{0\}$.
\end{lemma}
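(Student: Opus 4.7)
The plan is to formalize the eigenvector observation stated just before the lemma by introducing the natural sign-flip involution of a bipartite graph. Let $P$ be the diagonal $\pm 1$ matrix that takes the value $+1$ on the vertices in one part of the bipartition of $X$ and $-1$ on the vertices in the other part, so that
$$P=\begin{bmatrix} I & O \\ O & -I \end{bmatrix}.$$
Then $P^{2}=I$, and a direct block computation shows $PAP=-A$. This is precisely the algebraic content of the preceding paragraph, rewritten in a form suitable for use at the level of spectral idempotents.

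First I would translate the identity $PAP=-A$ to the level of projectors. Since $P$ is an orthogonal involution, it carries any orthonormal basis of the $\theta$-eigenspace of $A$ to an orthonormal basis of the $(-\theta)$-eigenspace; equivalently,
$$E_{-\theta}=P\microspace E_{\theta}\microspace P \qquad \text{for every } \theta\in\spec(X).$$
In particular, $\theta\in\spec(X)$ if and only if $-\theta\in\spec(X)$.

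Next, because each standard basis vector $\be_u$ is supported on a single part of the bipartition, we have $P\be_u=\varepsilon_u\be_u$ for some $\varepsilon_u\in\{+1,-1\}$. Combining this with the projector identity,
$$E_{-\theta}\be_u \;=\; P\microspace E_{\theta}\microspace P\be_u \;=\; \varepsilon_u\, P\microspace E_{\theta}\be_u.$$
Since $P$ is nonsingular, $E_{-\theta}\be_u=\zero$ if and only if $E_{\theta}\be_u=\zero$, which gives the closure of $\supp_{\be_u}$ under multiplication by $-1$. Likewise,
$$(E_{-\theta})_{u,u}\;=\;\be_u^{T}E_{-\theta}\be_u\;=\;\varepsilon_u^{2}\,\be_u^{T}E_{\theta}\be_u\;=\;(E_{\theta})_{u,u},$$
which yields the equality of diagonal entries; excluding $\theta=0$ is only cosmetic, since in that case $-\theta=\theta$.

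The lemma is genuinely immediate; there is no real obstacle. The only small design choice is whether to argue vertex by vertex using the eigenvector correspondence from the preceding paragraph, or to package that correspondence once and for all as the conjugation $E_{-\theta}=P\microspace E_{\theta}\microspace P$. I would prefer the latter, since it handles both conclusions uniformly and avoids any case analysis on which side of the bipartition $u$ lies.
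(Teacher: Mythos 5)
Your proof is correct and is essentially the paper's argument made explicit: the paper declares the lemma immediate from the eigenvector correspondence $\begin{bmatrix}\bu\\ \bv\end{bmatrix}\mapsto\begin{bmatrix}\bu\\ -\bv\end{bmatrix}$, and your signature matrix $P$ with $PAP=-A$ and $E_{-\theta}=PE_{\theta}P$ is just a clean packaging of that same observation.
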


\begin{proposition}
\label{prop11}
Let $X$ be a weighted bipartite graph. 
If $\theta\in \supp_{\be_u}\backslash\{0\}$, then $(E_\theta)_{u,u}\leq \frac{1}{2}$. Additionally, if $0\in\supp_{\be_u}$, then $(E_\theta)_{u,u}<\frac{1}{2}$.
\end{proposition}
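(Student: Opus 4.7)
The plan is to use the standard resolution of the identity $\sum_{\lambda\in\spec(X)} E_\lambda = I$ combined with the bipartite symmetry recorded in Lemma \ref{lem:bipesupp}. Taking the $(u,u)$-entry of this identity yields
\begin{equation*}
1 = \sum_{\lambda\in\spec(X)} (E_\lambda)_{u,u} = \sum_{\lambda\in\supp_{\be_u}} (E_\lambda)_{u,u},
\end{equation*}
since $(E_\lambda)_{u,u} = \|E_\lambda \be_u\|^2 \geq 0$, with strict inequality precisely when $\lambda \in \supp_{\be_u}$.

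For the first claim, fix $\theta \in \supp_{\be_u}\setminus\{0\}$. By Lemma \ref{lem:bipesupp}, $-\theta$ also lies in $\supp_{\be_u}$, and since $\theta \neq 0$, the eigenvalues $\theta$ and $-\theta$ are distinct, so they contribute separately to the sum above. Using $(E_\theta)_{u,u} = (E_{-\theta})_{u,u}$ from the same lemma, nonnegativity of the remaining summands gives
\begin{equation*}
1 \geq (E_\theta)_{u,u} + (E_{-\theta})_{u,u} = 2(E_\theta)_{u,u},
\end{equation*}
which yields $(E_\theta)_{u,u} \leq \tfrac{1}{2}$.

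For the additional claim, suppose further that $0 \in \supp_{\be_u}$. Then $(E_0)_{u,u} > 0$ because $(E_0)_{u,u} = \|E_0 \be_u\|^2$ and $E_0 \be_u \neq \mathbf{0}$. Adding this strictly positive term into the bound above gives
\begin{equation*}
1 \geq 2(E_\theta)_{u,u} + (E_0)_{u,u} > 2(E_\theta)_{u,u},
\end{equation*}
which produces the strict inequality $(E_\theta)_{u,u} < \tfrac{1}{2}$.

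There is no real obstacle here; the proof is a direct bookkeeping exercise once the bipartite pairing $\theta \leftrightarrow -\theta$ and the equality of the corresponding diagonal spectral idempotent entries are in hand. The only point requiring a small sanity check is that $\theta$ and $-\theta$ are genuinely different eigenvalues (ensured by $\theta \neq 0$) so that they are not double-counted in the resolution of the identity.
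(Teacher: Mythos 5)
Your proof is correct and follows essentially the same route as the paper's: both rest on the identity $\sum_{\lambda}(E_\lambda)_{u,u}=1$ together with the pairing $(E_\theta)_{u,u}=(E_{-\theta})_{u,u}$ from Lemma~\ref{lem:bipesupp}, with the positivity of $(E_0)_{u,u}$ forcing strictness. The paper merely phrases the same bookkeeping as a proof by contradiction.
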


\begin{proof}
Let $\theta\in \supp_{\be_u}\backslash\{0\}$. If $(E_\theta)_{u,u}>\frac{1}{2}$, then $(E_\theta)_{u,u}+(E_{-\theta})_{u,u}>1$ by Lemma \ref{lem:bipesupp}, a contradiction. So $(E_\theta)_{u,u}\leq \frac{1}{2}$. If we further assume that $0\in\supp_{\be_u}$, then $(E_{0})_{u,u}>0$. Since $(E_{0})_{u,u}+(E_\theta)_{u,u}+(E_{-\theta})_{u,u}\leq 1$, Lemma \ref{lem:bipesupp} implies that $(E_{0})_{u,u}=0$ whenever $(E_\theta)_{u,u}=\frac{1}{2}$, a contradiction. Thus, $(E_\theta)_{u,u}<\frac{1}{2}$.
\end{proof}

From the above proposition, the only eigenvalue $\theta\in\spec(X)$ for which $(E_\theta)_{u,u}>\frac{1}{2}$ is possible is $\theta=0$. For this reason, we focus our analysis on the zero eigenvalue of bipartite graphs.

Lemma \ref{lem:bipesupp} and (\ref{specdecU}) give us the diagonal entries of $U(t)$ whenever $X$ is bipartite.

\begin{lemma}
\label{lem:bipartite}
Let $X$ be a weighted bipartite graph with vertex $u$ and let $f(t)=\sum_{0<\lambda\in \supp_{\be_u}}\cos(\lambda t)(E_{\lambda})_{u,u}$. If $\theta=0\in\supp_{\be_u}$, then $U(t)_{u,u}=(E_0)_{u,u}+2f(t)$. Otherwise, $U(t)_{u,u}=2f(t)$.
\end{lemma}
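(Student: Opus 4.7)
The plan is to compute the diagonal entry $U(t)_{u,u}$ directly from the spectral decomposition (\ref{specdecU}) and then use the symmetry of the spectrum of a bipartite graph supplied by Lemma \ref{lem:bipesupp} to pair terms.

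First I would start from
\begin{equation*}
U(t)_{u,u}=\sum_{\lambda\in\supp_{\be_u}}e^{it\lambda}(E_\lambda)_{u,u},
\end{equation*}
noting that eigenvalues outside $\supp_{\be_u}$ contribute $0$. Next, I would split the index set into $\{0\}$ (if it lies in $\supp_{\be_u}$) and the nonzero part. By Lemma \ref{lem:bipesupp}, the nonzero part is closed under negation, so I can group each $\lambda>0$ with $-\lambda$, obtaining
\begin{equation*}
e^{it\lambda}(E_\lambda)_{u,u}+e^{-it\lambda}(E_{-\lambda})_{u,u}=\bigl(e^{it\lambda}+e^{-it\lambda}\bigr)(E_\lambda)_{u,u}=2\cos(\lambda t)(E_\lambda)_{u,u},
\end{equation*}
again by Lemma \ref{lem:bipesupp} which gives $(E_{-\lambda})_{u,u}=(E_\lambda)_{u,u}$. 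Summing over $\lambda\in\supp_{\be_u}$ with $\lambda>0$ yields exactly $2f(t)$.

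Finally, I would handle the two cases. If $0\in\supp_{\be_u}$, the zero-eigenvalue term contributes $e^{it\cdot 0}(E_0)_{u,u}=(E_0)_{u,u}$, giving $U(t)_{u,u}=(E_0)_{u,u}+2f(t)$. Otherwise, only the paired nonzero terms survive, giving $U(t)_{u,u}=2f(t)$. There is essentially no obstacle here; the only thing to be careful about is confirming that the positive/negative pairing is a bijection on $\supp_{\be_u}\setminus\{0\}$, which is exactly the content of Lemma \ref{lem:bipesupp}, and that the equality $(E_{-\lambda})_{u,u}=(E_\lambda)_{u,u}$ is applied correctly so that each pair collapses to a single cosine term with a common coefficient.
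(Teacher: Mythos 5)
Your proof is correct and follows exactly the route the paper intends: the paper states the lemma as an immediate consequence of Lemma \ref{lem:bipesupp} and the spectral decomposition (\ref{specdecU}), and your pairing of $\lambda$ with $-\lambda$ together with $(E_{-\lambda})_{u,u}=(E_{\lambda})_{u,u}$ is precisely that argument made explicit.
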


Thus, for a bipartite graph $X$, each diagonal entry of $U(t)$ is a continuous real-valued function. 

By virtue of Proposition \ref{prop11}, Theorem \ref{thm:sed}(1) is not useful for a bipartite graph with vertex $u$ such that $0\notin\supp_{\be_u}$. It turns out that such vertices are in fact not sedentary as our next result shows. 

\begin{theorem}
\label{thm:bipartite}
If $X$ is a weighted bipartite graph with vertex $u$ such that $0\notin\supp_{\be_u}$, then $u$ is not sedentary.
\end{theorem}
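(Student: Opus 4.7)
The plan is to prove that $U(t)_{u,u}$ attains a negative value at some $y > 0$. Since Lemma~\ref{lem:bipartite} ensures that $U(t)_{u,u}$ is a continuous real-valued function of $t$ with $U(0)_{u,u} = 1$, the intermediate value theorem will then produce $t_0 \in (0,y)$ with $U(t_0)_{u,u} = 0$, forcing $\inf_{t>0}|U(t)_{u,u}|=0$ and thus that $u$ is not sedentary.

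Enumerate the positive eigenvalues in $\supp_{\be_u}$ as $\lambda_1,\dots,\lambda_r$ and set $p_j := (E_{\lambda_j})_{u,u} > 0$; by Lemma~\ref{lem:bipesupp} together with $0 \notin \supp_{\be_u}$, the values $p_j$ sum to $1/2$ and
\begin{equation*}
U(t)_{u,u} \;=\; 2\sum_{j=1}^{r} p_j \cos(\lambda_j t).
\end{equation*}
To apply Theorem~\ref{thm:kronecker}, pick a $\Q$-basis $\mu_1,\dots,\mu_d$ of $\mathrm{span}_{\Q}(\lambda_1,\dots,\lambda_r)$ (with $d \ge 1$), and, after clearing denominators, write $\lambda_j = \sum_{k=1}^d m_{jk}\mu_k$ with $m_{jk}\in\Z$. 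Introduce the continuous function
\begin{equation*}
G(\zeta_1,\dots,\zeta_d) \;:=\; \sum_{j=1}^{r} p_j \cos\!\Big(\textstyle\sum_{k=1}^d m_{jk}\zeta_k\Big)
\end{equation*}
on the torus $(\R/2\pi\Z)^d$, so that $G(0) = 1/2$.

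The heart of the argument is to exhibit $\zeta^\ast$ with $G(\zeta^\ast) < 0$. Since each $\lambda_j \ne 0$, the integer vector $(m_{j1},\dots,m_{jd})$ is nonzero, so every summand $\cos(\sum_k m_{jk}\zeta_k)$ integrates to $0$ against normalized Haar measure on $(\R/2\pi\Z)^d$, giving $\int G = 0$. A continuous non-negative function with zero integral on a compact space must vanish identically; since $G(0) \ne 0$, $G$ is therefore strictly negative at some $\zeta^\ast$. Theorem~\ref{thm:kronecker}, applied to the $\Q$-independent $\mu_1,\dots,\mu_d$, then produces $y > 0$ with $\mu_k y \equiv \zeta^\ast_k \pmod{2\pi}$ to arbitrary accuracy. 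The integer relations yield $\lambda_j y \equiv \sum_k m_{jk}\zeta^\ast_k \pmod{2\pi}$ with error controlled by $\max_{j,k}|m_{jk}|$, so uniform continuity of cosine makes $U(y)_{u,u}$ as close to $2G(\zeta^\ast)<0$ as desired, and IVT on $[0,y]$ closes the argument.

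The main obstacle is the rational-dependence step: without first passing to the $\Q$-independent frequencies $\mu_k$, Kronecker's compatibility condition cannot be verified directly. One cannot in general take the naive target angles $\zeta_j = \pi$ (which would give $U(y)_{u,u} \approx -1$), because the integer relations among the $\lambda_j$ may impose an incompatible parity constraint. Routing through the lower-dimensional torus $(\R/2\pi\Z)^d$ sidesteps this: the existence of a suitable $\zeta^\ast$ becomes automatic from the \emph{continuous, non-negative, zero-integral $\Rightarrow$ identically zero} dichotomy, and Kronecker can then be applied cleanly to the $\mu_k$.
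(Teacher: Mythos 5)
Your proof is correct, and it takes a genuinely different route from the paper's. Both arguments start from $U(t)_{u,u}=2\sum_{0<\lambda\in\supp_{\be_u}}\cos(\lambda t)(E_\lambda)_{u,u}$ and both finish with continuity of $U(t)_{u,u}$ plus the intermediate value theorem, but the core mechanisms differ. The paper uses a \emph{time} average: applying Theorem~\ref{thm:kronecker} with all target angles equal to $0$ produces times $t_\epsilon$ at which each $\tfrac{1}{\lambda}\sin(\lambda t_\epsilon)$, and hence $\int_0^{|t_\epsilon|}U(t)_{u,u}\,dt$, is below $\epsilon$, which forces $U(t)_{u,u}$ to dip arbitrarily close to $0$. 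You use a \emph{space} average: after rescaling a $\Q$-basis of $\mathrm{span}_{\Q}(\lambda_1,\dots,\lambda_r)$ so that the frequencies have integer coordinates $(m_{j1},\dots,m_{jd})\neq 0$, the trigonometric polynomial $G$ has zero mean over the torus, so (being continuous and not identically zero, since $G(0)=\tfrac12$) it is strictly negative somewhere, and Kronecker's theorem --- whose compatibility condition is vacuous for $\Q$-independent $\mu_k$ --- realizes that point in time. Your route buys a strictly stronger conclusion, namely $U(y)_{u,u}<0$ at an actual time and hence an exact zero of $U(t)_{u,u}$, in the spirit of the final claim of Theorem~\ref{bip0}; it also sidesteps the (fixable) point in the paper's argument that the times $t_\epsilon$ must be kept bounded away from $0$ for the small integral to force $U$ near zero. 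The only detail to add on your side is the sign of $y$: Kronecker returns a real solution that need not be positive, but since $U(t)_{u,u}$ is an even function of $t$ you may replace $y$ by $|y|$, and $y\neq 0$ for small $\epsilon$ because $\zeta^\ast\not\equiv 0$.
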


\begin{proof}
Let $X$ be a bipartite graph with vertex $u$ such that $0\notin \supp_{\be_u}$. From Lemma \ref{lem:bipartite}, we have $$U(t)_{u,u}=2\sum_{0<\lambda\in\supp_{\be_u}}\cos(\lambda t)(E_{\lambda})_{u,u}.$$ 
Let $\Lambda=\min_{0<\lambda\in\supp_{\be_u}}\lambda$. 
For each sufficiently small $\epsilon>0$, consider the system of inequalities $|\lambda t|<\Lambda\epsilon$ (mod $2\pi$), for all $0<\lambda\in\supp_{\be_u}$. Applying Theorem \ref{thm:kronecker} with $\zeta_j=0$ for each $j$ yields a solution $t_\epsilon$ to this system of inequalities. Since $\epsilon$ is small, we have $\sin(|\lambda  t_\epsilon|)<\sin(\Lambda\epsilon)<\Lambda\epsilon$. Now, because $\cos(\lambda t)$ is continuous over $\mathbb{R}$ for each $0<\lambda\in\supp_{\be_u}$, we get
$$\displaystyle\int_{0}^{|t_\epsilon|}\cos(\lambda t)dt=\frac{1}{\lambda}\sin(\lambda t)\\ \bigg|_{t=0}^{t=|t_\epsilon|}
=\frac{1}{\lambda}\sin(|\lambda t_\epsilon|) <\frac{1}{\lambda}\sin(\Lambda\epsilon)<\frac{\Lambda\epsilon}{\lambda}\leq \epsilon.$$
Consequently,
$$\int_{0}^{|t_\epsilon|} U(t)_{u,u} dt= 2\sum_{0<\lambda\in\supp_{\be_u}}\left(\int_{0}^{|t_\epsilon|}\cos(\lambda t)dt\right)(E_{\lambda_j})_{u,u}< 2\epsilon\sum_{0<\lambda\in\supp_{\be_u}}(E_{\lambda_j})_{u,u}<\epsilon.$$ 
Equivalently, there exists a sequence of times $\{t_k\}\subseteq\R^+$ such that $\displaystyle\lim_{k\rightarrow\infty}\int_{0}^{t_k} U(t)_{u,u} dt=0$. Finally, since $U(t)_{u,u}$ is continuous over $\mathbb{R}$ and $U(0)_{u,u}=1$, we may invoke the intermediate value theorem to obtain a sequence $\{t_k'\}$ such that $\lim_{k\rightarrow\infty}U(t_k')_{u,u}=0$. Thus, $u$ is not sedentary.
\end{proof}

From Theorem \ref{thm:bipartite}, the following result is immediate.

\begin{corollary}
\label{cor:intbipgraphs}
If $X$ is a nonsingular weighted bipartite graph, then $X$ is not sedentary.
\end{corollary}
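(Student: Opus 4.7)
The plan is essentially to feed Theorem \ref{thm:bipartite} with the hypothesis of nonsingularity and read off the conclusion vertex by vertex. Since $X$ is nonsingular, $A(X)$ has no zero eigenvalue, so $0 \notin \spec(X)$. In particular, for every vertex $u \in V(X)$, the eigenvalue support $\supp_{\be_u} \subseteq \spec(X)$ cannot contain $0$.

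Fix an arbitrary $u \in V(X)$. Since $X$ is bipartite, weighted, and $0 \notin \supp_{\be_u}$, Theorem \ref{thm:bipartite} applies and yields that $u$ is not sedentary, i.e., $\inf_{t>0} |U(t)_{u,u}| = 0$. Because $u$ was arbitrary, every vertex of $X$ is not sedentary, which by Definition \ref{def} means that $X$ itself is not sedentary.

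There is no real obstacle here: the work has been done in Theorem \ref{thm:bipartite}, and the corollary is just the observation that nonsingularity of the adjacency matrix is exactly the global statement that $0$ lies in no vertex's eigenvalue support, so the pointwise hypothesis of Theorem \ref{thm:bipartite} holds uniformly in $u$.
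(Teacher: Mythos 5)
Your proof is correct and matches the paper's intent exactly: the paper states the corollary is immediate from Theorem \ref{thm:bipartite}, and your argument — nonsingularity means $0\notin\spec(X)$, hence $0\notin\supp_{\be_u}$ for every vertex $u$, so each vertex is not sedentary — is precisely that deduction spelled out.
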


Note that $X$ is a nonsingular weighted bipartite graph if and only if $A(X)$ has an even number of distinct eigenvalues. Hence, we may restate Corollary \ref{cor:intbipgraphs} as follows.

\begin{corollary}
\label{evenevals}
If $X$ is a weighted bipartite graph with an even number of distinct eigenvalues, then $X$ is not sedentary.
\end{corollary}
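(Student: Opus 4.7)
The plan is to reduce this corollary directly to Corollary \ref{cor:intbipgraphs} by verifying the equivalence claimed in the sentence immediately preceding the statement, namely that for a weighted bipartite graph, having an even number of distinct eigenvalues is the same as being nonsingular.

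First I would recall the spectral symmetry of bipartite graphs already noted in the paragraph introducing Lemma \ref{lem:bipesupp}: if $A(X) = \begin{bmatrix} O & B \\ B^T & O \end{bmatrix}$, then $\lambda$ is an eigenvalue of $A(X)$ if and only if $-\lambda$ is. Hence the nonzero elements of $\spec(X)$ can be partitioned into pairs $\{\lambda, -\lambda\}$. In particular, the number of nonzero distinct eigenvalues is always even.

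Next I would use this to turn the parity of $|\spec(X)|$ into a statement about $0 \in \spec(X)$. Since the nonzero distinct eigenvalues contribute an even count, $|\spec(X)|$ is even precisely when $0 \notin \spec(X)$, which is precisely the condition that $A(X)$ is nonsingular. Thus the hypothesis that $X$ has an even number of distinct eigenvalues is equivalent to the hypothesis of Corollary \ref{cor:intbipgraphs}, and the conclusion follows by direct appeal to that corollary.

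There is no real obstacle here; the entire content is the observation that the symmetry $\lambda \leftrightarrow -\lambda$ forces the only possible ``unpaired'' eigenvalue to be $0$ itself, so the parity count exactly detects singularity. The proof is essentially a one-line translation of hypotheses followed by invocation of the previously established corollary.
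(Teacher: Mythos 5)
Your proof is correct and follows exactly the paper's route: the paper also deduces this corollary by observing that, for a weighted bipartite graph, an even number of distinct eigenvalues is equivalent to nonsingularity and then invoking Corollary \ref{cor:intbipgraphs}. Your write-up simply makes explicit the pairing $\lambda \leftrightarrow -\lambda$ that justifies the equivalence, which the paper leaves implicit.
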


\section{Singular bipartite graphs}\label{sec:bip1}

Next, we consider the case $0\in \supp_{\be_u}$. Since Theorem \ref{thm:sed} handles the case $(E_{0})_{u,u}\geq \frac{1}{2}$, our investigation in this section will focus on what happens when $(E_{0})_{u,u}<\frac{1}{2}$.

Suppose $u$ is a vertex in a bipartite graph such that $0\in \supp_{\be_u}$ and $(E_{0})_{u,u}<\frac{1}{2}$. Let $S$ be a set such that
\begin{equation}
\label{S}
\text{$S$ is a nonempty subset of $\supp_{\be_u}\cap\R^+$ such that $\alpha:=\sum_{\lambda\in S}(E_{\lambda})_{u,u}\geq\frac{1}{4}$.}
\end{equation}
Since $(E_{0})_{u,u}<\frac{1}{2}$, such a set $S$ is guaranteed to exist. Given this assumption, we now establish number-theoretic conditions that rule out sedentariness of vertex $u$.

\begin{theorem}
\label{bip0}
Let $X$ be a weighted bipartite graph with vertex $u$ such that $0\in \supp_{\be_u}$. Consider the set $S$ in (\ref{S}) and define $\zeta:=\arccos (-\frac{1}{4\alpha})\in\left(\frac{\pi}{2},\pi\right].$ Given a tuple of integers $(\ell_\lambda)_{\lambda\in S}$ and a subset $S'$ of $S$ (possibly empty), let $D_{S'}:=\sum_{\lambda\in S\backslash S'}\ell_{\lambda}-\sum_{\lambda\in S'}\ell_{\lambda}$, where $\sum_{\lambda\in S'}\ell_{\lambda}$ is absent whenever $S'=\varnothing$.
\begin{enumerate}
\item Let $\zeta\notin\Q\pi$. If  for each tuple of integers $(\ell_\lambda)$, 
\begin{equation*}
\displaystyle\sum_{\lambda\in S}\lambda\ell_{\lambda}=0\quad \text{implies that}  
\quad D_{S'}=0
\end{equation*}
for some subset $S'$ of $S$, then $u$ is not sedentary.
\item Let $\zeta=\frac{a\pi}{b}$ for some integers $a$ and $b$ such that $\operatorname{gcd}(a,b)=1$. If for each tuple of integers $(\ell_\lambda)_{\lambda\in S}$,  
\begin{center}
$\displaystyle\sum_{\lambda\in S}\lambda\ell_{\lambda}=0\quad $ implies that $\quad b$ divides $D_{S'}$ and $\nu_2\left( D_{S'}\right)>\nu_2(b)-\nu_2(a)$,
\end{center}
for some subset $S'$ of $S$, then $u$ is not sedentary.
\end{enumerate}
In both cases, $U(t)_{u,u}\leq 0$ for some $t\approx\frac{\zeta}{\lambda}$ for any $\lambda\in S$.
\end{theorem}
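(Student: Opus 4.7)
The plan is to invoke Kronecker's theorem (Theorem~\ref{thm:kronecker}) to produce a positive time $t_\epsilon$ at which $\cos(\lambda t_\epsilon)$ is simultaneously close to $\cos(\zeta)=-\frac{1}{4\alpha}$ for every $\lambda\in S$, and then to bound $U(t_\epsilon)_{u,u}$ from above by $\frac{1}{2}-2\alpha+o(1)\le o(1)$. Since $U(\cdot)_{u,u}$ is a real continuous function with $U(0)_{u,u}=1$ by Lemma~\ref{lem:bipartite}, the intermediate value theorem will then either exhibit a zero of $U(\cdot)_{u,u}$ on $(0,t_\epsilon)$, or directly produce a sequence along which $U(\cdot)_{u,u}\to 0^+$; in either case $\inf_{t>0}\lvert U(t)_{u,u}\rvert=0$, which is what non-sedentariness of $u$ requires.

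To set up Kronecker, I would fix the subset $S'\subseteq S$ witnessing the hypothesis and take targets $\zeta_\lambda=\zeta$ for $\lambda\in S\setminus S'$ and $\zeta_\lambda=-\zeta$ for $\lambda\in S'$, so that $\cos(\zeta_\lambda)=-\frac{1}{4\alpha}$ throughout. The required compatibility condition reads: whenever $\sum_{\lambda\in S}\ell_\lambda\lambda=0$, we must have $\sum_{\lambda\in S}\ell_\lambda\zeta_\lambda=\zeta\cdot D_{S'}\equiv 0\pmod{2\pi}$. In case~(1), $\zeta\notin \Q\pi$ forces $D_{S'}=0$, matching the hypothesis directly. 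In case~(2), $\zeta=a\pi/b$ with $\gcd(a,b)=1$, and $\zeta D_{S'}\equiv 0\pmod{2\pi}$ translates into $2b\mid aD_{S'}$; since $\gcd(a,b)=1$, this is equivalent to $b\mid D_{S'}$ together with $\nu_2(a)+\nu_2(D_{S'})\ge 1+\nu_2(b)$, i.e.\ $\nu_2(D_{S'})>\nu_2(b)-\nu_2(a)$ (all $\nu_2$ values being integer), again matching the hypothesis. Theorem~\ref{thm:kronecker} then supplies, for every $\epsilon>0$, a positive $t_\epsilon$ with $\lvert\lambda t_\epsilon-\zeta_\lambda\rvert<\epsilon\pmod{2\pi}$ for every $\lambda\in S$, whence $\cos(\lambda t_\epsilon)\to -\frac{1}{4\alpha}$ uniformly on $S$ as $\epsilon\to 0^+$.

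For the estimate, I would split Lemma~\ref{lem:bipartite}'s expression into the $(E_0)_{u,u}$ term, the sum over $S$, and the sum over $(\supp_{\be_u}\cap\R^+)\setminus S$, bound $\cos(\mu t_\epsilon)\le 1$ on the complement, and use the normalization $(E_0)_{u,u}+2\alpha+2\sum_{\mu\in(\supp_{\be_u}\cap\R^+)\setminus S}(E_\mu)_{u,u}=1$ (which follows from $\sum_{\lambda\in\spec(X)}(E_\lambda)_{u,u}=1$ together with Lemma~\ref{lem:bipesupp}) to get
$$U(t_\epsilon)_{u,u}\le (E_0)_{u,u}-\frac{1}{2}+o(1)+\bigl(1-(E_0)_{u,u}-2\alpha\bigr)=\frac{1}{2}-2\alpha+o(1)\le o(1).$$
Combined with continuity and $U(0)_{u,u}=1$, this forces $\inf_{t>0}\lvert U(t)_{u,u}\rvert=0$, so $u$ is not sedentary; the final quantitative claim that $U(t)_{u,u}\le 0$ for some $t\approx\zeta/\lambda$ is just this bound read off at $t_\epsilon$. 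The main technical obstacle will be the case~(2) bookkeeping --- factoring $2b\mid aD_{S'}$ through $\gcd(a,b)=1$ cleanly into the stated $b$-divisibility and $2$-adic conditions --- together with quantifier discipline in the hypothesis, since Kronecker demands a partition $S'$ that is fixed in advance of the integer tuples $(\ell_\lambda)$.
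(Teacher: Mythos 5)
Your proposal follows essentially the same route as the paper's proof: fix $S'$, apply Kronecker's theorem with targets $\pm\zeta$ so that $\cos(\lambda t_\epsilon)\to-\frac{1}{4\alpha}$ on $S$, bound the remaining spectral terms trivially to get $U(t_\epsilon)_{u,u}\le\frac{1}{2}-2\alpha+o(1)\le o(1)$, and finish by continuity and the intermediate value theorem. Your explicit check that the stated divisibility conditions are exactly the Kronecker compatibility condition $\zeta D_{S'}\equiv 0\pmod{2\pi}$ is in fact more detailed than the paper's, and your closing remark about fixing $S'$ before quantifying over the integer tuples correctly flags a quantifier subtlety that the paper also resolves by fixing $S'$ at the outset.
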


\begin{proof}
Consider the set $S$ in our assumption and let $S'$ be a subset  of $S$ (possibly empty). If for each $\epsilon>0$, we can find a solution to the system of inequalities 
\begin{equation}
\label{sol}
|\lambda t+\zeta|<\epsilon\ \text{(mod $2\pi$)}\quad \text{for each $\lambda\in S\backslash S'$}\quad  \text{and} \quad |\lambda t-\zeta|<\epsilon\ \text{(mod $2\pi$)}\quad \text{for each $\lambda\in S'$}
\end{equation}
then there exists a sequence $\{t_k\}$ such that $\cos(\lambda t_k)\rightarrow-\frac{1}{4\alpha}$ for each $\lambda\in S$. As $\alpha=\sum_{\lambda\in S}(E_{\lambda})_{u,u}$, we get
\begin{equation*}
\lim_{k\rightarrow\infty}\sum_{\lambda\in S}\cos(\lambda t_k)(E_\lambda)_{u,u}=\left(\lim_{k\rightarrow\infty}\cos(\lambda t_k)\right)\sum_{\lambda\in S}(E_\lambda)_{u,u}=-\frac{1}{4\alpha}\sum_{\lambda\in S}(E_\lambda)_{u,u}=-\frac{1}{4}.
\end{equation*}
Consequently, by Lemma \ref{lem:bipartite}, we have
\begin{equation*}
\begin{split}
\lim_{k\rightarrow\infty}U(t_k)_{u,u} &=(E_{0})_{u,u}+2\lim_{k\rightarrow\infty}\sum_{\lambda\notin S}\cos(\lambda t_k)(E_{\lambda})_{u,u}+2\lim_{k\rightarrow\infty}\sum_{\lambda\in S}\cos(\lambda t_k)(E_{\lambda})_{u,u} \\
&=(E_{0})_{u,u}+2\lim_{k\rightarrow\infty}\sum_{\lambda\notin S}\cos(\lambda t_k)(E_{\lambda})_{u,u}-\frac{1}{2}\leq (E_{0})_{u,u}+2\sum_{\lambda\notin S}(E_{\lambda})_{u,u}-\frac{1}{2}\leq 0,
\end{split}
\end{equation*}
where the last inequality follows from the fact that $\alpha\geq \frac{1}{4}$. Equivalently, $u$ is not sedentary. Applying Theorem \ref{thm:kronecker} yields a solution $t_\epsilon$ to (\ref{sol}) for each $\epsilon>0$.
\end{proof}

\begin{remark}
If $S$ contains two integers $\lambda_1$ and $\lambda_2$, then Theorem \ref{bip0}(1) does not apply. To see this, let $\ell_1=\lambda_2$, $\ell_2=-\lambda_1$, and $\ell_\lambda=0$ for any $\lambda\in S\backslash\{\lambda_1,\lambda_2\}$. Then $\sum_{\lambda\in S}\lambda\ell_{\lambda}=0=\lambda_1\ell_1+\lambda_2\ell_2=0$ and $D_{S'} \in\{\pm(\lambda_1-\lambda_2),\pm(\lambda_1+\lambda_2)\}$ depending on whether $\lambda_1$ or $\lambda_2$ belongs to $S'$. Hence, $D\neq 0$.
\end{remark}

\begin{corollary}
\label{cor17}
Let $X$ be a bipartite graph with vertex $u$ such that $0\in \supp_{\be_u}$. If $S$ in Theorem \ref{bip0} is either (i) a linearly independent set over $\Q$, or (ii) consists of integers with equal powers of two in their factorizations, then $u$ is not sedentary.
\end{corollary}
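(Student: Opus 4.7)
The plan is to verify, for each of the two situations, the hypothesis needed to apply Theorem~\ref{bip0}, with a minor adaptation in situation (ii) so that the target angle is $\pi$ rather than $\arccos(-\frac{1}{4\alpha})$.

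For situation (i), I exploit the $\Q$-linear independence of $S$: the only integer tuple $(\ell_\lambda)_{\lambda\in S}$ satisfying $\sum_{\lambda\in S}\lambda\ell_\lambda=0$ is the zero tuple, so $D_{S'}=0$ for every $S'\subseteq S$. This fulfils both sufficient conditions in Theorem~\ref{bip0}: part~(1) because $D_{S'}=0$, and part~(2) because $b\mid 0$ and $\nu_2(0)=\infty>\nu_2(b)-\nu_2(a)$. Theorem~\ref{bip0} then yields directly that $u$ is not sedentary.

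For situation (ii), I will write each $\lambda\in S$ as $\lambda=2^{k}m_\lambda$, where $k=\nu_2(\lambda)$ is the common $2$-adic valuation and $m_\lambda$ is odd. Given $\sum_{\lambda\in S}\lambda\ell_\lambda=0$ for integers $\ell_\lambda$, dividing by $2^{k}$ gives $\sum_{\lambda\in S}m_\lambda\ell_\lambda=0$, and reducing modulo $2$ shows that $\sum_{\lambda\in S}\ell_\lambda$ is even, since every $m_\lambda$ is odd. Taking $S'=\varnothing$, this means $D_{\varnothing}=\sum_{\lambda\in S}\ell_\lambda\in 2\Z$.

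The main obstacle is that this parity condition fits Theorem~\ref{bip0}(2) only when $\zeta=a\pi/b=\pi$ (i.e.\ $\alpha=\tfrac14$), whereas we only know $\alpha\geq\tfrac14$ and $\zeta=\arccos(-\frac{1}{4\alpha})$ need not even be a rational multiple of $\pi$. I will sidestep this by re-running the argument behind Theorem~\ref{bip0} with the target angle $\zeta=\pi$ in place of $\arccos(-\frac{1}{4\alpha})$; the substitution is legitimate because $\cos(\pi)=-1\leq -\frac{1}{4\alpha}$, so the same upper estimate $\limsup_{k\to\infty}U(t_k)_{u,u}\leq 1-4\alpha\leq 0$ still goes through. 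With $\zeta=\pi$, the hypothesis of Theorem~\ref{thm:kronecker} simplifies to demanding that $\pi\sum_{\lambda\in S}\ell_\lambda\equiv 0\pmod{2\pi}$ whenever $\sum_{\lambda\in S}\lambda\ell_\lambda=0$, which is exactly the evenness established in the previous paragraph. Continuity of $U(t)_{u,u}$ via Lemma~\ref{lem:bipartite}, combined with $U(0)_{u,u}=1$ and the intermediate value theorem, then yield $\inf_{t>0}|U(t)_{u,u}|=0$, completing the argument.
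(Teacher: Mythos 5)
Your proof is correct and follows the same overall strategy as the paper's (reduce both cases to the number-theoretic hypotheses of Theorem~\ref{bip0}), but it is more careful on two points worth noting. For case (i) you observe that the zero tuple satisfies the hypotheses of \emph{both} parts of Theorem~\ref{bip0}, which covers the possibility $\zeta\in\Q\pi$; the paper simply invokes part (1), which as stated requires $\zeta\notin\Q\pi$. For case (ii) you correctly identify that Theorem~\ref{bip0}(2) pins $\zeta$ to $\arccos(-\frac{1}{4\alpha})$, which equals $\pi$ only when $\alpha=\tfrac14$, whereas the paper applies part (2) ``with $\zeta=\pi$'' without comment. Your repair --- rerunning the proof of Theorem~\ref{bip0} with target angle $\pi$, using $\cos\pi=-1\le-\frac{1}{4\alpha}$ so that $\limsup_k U(t_k)_{u,u}\le 1-4\alpha\le 0$, and feeding the evenness of $\sum_{\lambda\in S}\ell_\lambda$ into Kronecker's theorem with all $\zeta_j=\pi$ --- is exactly the right way to close that gap, and your parity computation (dividing by the common power of two and reducing mod $2$) matches the paper's. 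In short: same route, but your version makes explicit two small justifications the paper elides.
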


\begin{proof}
If (i)  holds, then $\sum_{\lambda\in S}\lambda\ell_{\lambda}=0$ implies that each $\ell_\lambda=0$, and so the premise of Theorem \ref{bip0}(1) always holds. Thus, $u$ is not sedentary. Now, suppose (ii)  holds. We apply Theorem \ref{bip0}(2) with $\zeta=\pi$ (i.e., $a=b=1$) and $S'=\varnothing$. Suppose $\sum_{\lambda\in S}\lambda\ell_{\lambda}=0$. Since $S$ consists of integers with equal powers of two in their factorizations, say $\eta$, we may write the preceding equation as $\sum_{\lambda\in S}(\lambda/2^{\eta})\ell_{\lambda}=0$. As it is odd, we can assume  $\lambda/2^{\eta} = 2k_\lambda+1$ and thus $0 = \sum_{\lambda\in S} \lambda/2^{\eta} \ell_{\lambda} = 2 \sum_{\lambda\in S} k_\lambda +   \sum_{\lambda\in S} \ell_{\lambda},$ and so $D_{S'}= \sum_{\lambda\in S} \ell_{\lambda} $ is even.
Invoking Theorem \ref{bip0}(2), we conclude that $u$ is not sedentary.
\end{proof}

\begin{corollary}
\label{cor18}
Let $X$ be a weighted bipartite graph with vertex $u$ such that $0\in\supp_{\be_u}$ and $(E_{0})_{u,u}<\frac{1}{2}$. If one the following conditions hold, then $u$ is not sedentary.
\begin{enumerate}
    \item $|\supp_{\be_u}\cap\mathbb{R}^+|=1$.
    \item $|\supp_{\be_u}\cap\mathbb{R}^+|\geq 2$ and either (i) $\supp_{\be_u}\cap\mathbb{R}^+$ consists of integers with equal powers of two in their factorizations, or (ii) $\supp_{\be_u}\cap\mathbb{R}^+$ is a linearly independent set over $\mathbb{Q}$.
\end{enumerate}
\end{corollary}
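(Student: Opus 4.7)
The plan is to derive each case from Corollary \ref{cor17} by taking $S:=\supp_{\be_u}\cap\R^+$. The key preliminary observation is that this choice of $S$ automatically satisfies the size hypothesis $\alpha\geq\tfrac14$ in condition (\ref{S}) as a direct consequence of the standing assumption $(E_0)_{u,u}<\tfrac12$.

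First I would verify this size estimate. Because $\sum_{\lambda\in\spec(X)}E_\lambda=I$, the diagonal entries satisfy $\sum_{\lambda\in\supp_{\be_u}}(E_\lambda)_{u,u}=1$. The bipartite symmetry from Lemma \ref{lem:bipesupp} gives $(E_\lambda)_{u,u}=(E_{-\lambda})_{u,u}$ for each $\lambda\in S$, so
\begin{equation*}
\alpha=\sum_{\lambda\in S}(E_\lambda)_{u,u}=\tfrac{1}{2}\bigl(1-(E_0)_{u,u}\bigr)>\tfrac{1}{4},
\end{equation*}
which means $S$ satisfies (\ref{S}) and therefore Corollary \ref{cor17} is applicable.

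Next I would match each subcase to the appropriate branch of Corollary \ref{cor17}. For (1), $|S|=1$, so $S$ is trivially linearly independent over $\Q$ and Corollary \ref{cor17}(i) gives the conclusion. For (2)(i), $S$ consists of integers with equal $2$-adic valuations, which is exactly the hypothesis of Corollary \ref{cor17}(ii). For (2)(ii), $S$ is linearly independent over $\Q$ by assumption, matching Corollary \ref{cor17}(i). In each case we conclude that $u$ is not sedentary.

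There is no real obstacle: the entire content of the argument is the observation that $(E_0)_{u,u}<\tfrac12$ in a bipartite graph forces $\alpha>\tfrac14$ when $S$ is chosen to be the full positive part of the eigenvalue support, and once this is recorded the result is a direct repackaging of Corollary \ref{cor17}. The only mildly subtle point worth noting is that case (1) is a degenerate instance of (2)(ii), but isolating it emphasizes that neither the $2$-adic hypothesis nor linear independence over $\Q$ is needed when the positive part of the support is a singleton.
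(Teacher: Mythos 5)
Your proof is correct and matches the paper's approach: both take $S=\supp_{\be_u}\cap\mathbb{R}^+$ and invoke Corollary \ref{cor17}, and your explicit check that $\alpha=\tfrac{1}{2}(1-(E_0)_{u,u})>\tfrac14$ just makes precise what the paper leaves implicit in condition (\ref{S}). The only (harmless) divergence is in case (1), where the paper evaluates $U(\pi/\lambda)_{u,u}=2(E_0)_{u,u}-1<0$ directly instead of treating the singleton as a degenerate instance of linear independence over $\Q$.
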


\begin{proof}
Let $\supp_{\be_u}\cap\mathbb{R}^+=\{\lambda\}$. By Lemma \ref{lem:bipartite}, $U(t)_{u,u}=(E_{0})_{u,u}+2\cos(\lambda t)(1-(E_{0})_{u,u})$, and because $(E_{0})_{u,u}<-\frac{1}{2}$, we get that $U(\pi/\lambda)_{u,u}<-\frac{1}{2}$. Thus, the condition in 1 implies that $u$ is not sedentary.
Moreover, by taking $S=\supp_{\be_u}\cap\mathbb{R}^+$, the conditions in 2 imply that $u$ is not sedentary by Corollary \ref{cor17}.
\end{proof}

Using our results, we show that every vertex in a subdivided star is not sedentary.

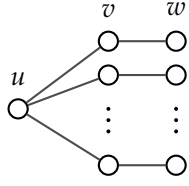
\begin{figure}
\begin{center}
\begin{tikzpicture}[scale=.3,auto=left]
                       \tikzstyle{every node}=[circle, thick, fill=white, scale=0.7]

		        \node[draw,label={above, yshift=0cm: {\Large$u$}}] (1) at (1,0) {};
                \node[draw,label={above, yshift=0cm: {\Large $v$}}] (2) at (5,3) {};
                \node[draw,label={above, yshift=0cm: {\Large $w$}}] (3) at (8,3) {};
                \node[draw] (4) at (5,1.5) {};
                \node[draw] (5) at (8,1.5) {};
                \node[draw] (8) at (5,-2.5) {};
                \node[draw] (9) at (8,-2.5) {};

				\draw [thick, black!70] (1)--(4)--(5);
                \draw [thick, black!70] (1)--(2)--(3);
                \draw [thick, black!70] (1)--(8)--(9);
				
                \foreach \x in {-1,-0.5,0} {\fill (8,\x) circle (2.5pt); }
                \foreach \x in {-1,-0.5,0} {\fill (5,\x) circle (2.5pt); }

		        	\end{tikzpicture}
				
\end{center}	
\caption{\label{fig7} The subdivided star $G(m)$ with vertices $u$ of degree $m$, $v$ of degree two and $w$ of degree one.}
\end{figure}

\begin{corollary}
\label{subdstar}
For all $m\geq 2$, the unweighted subdivided star $G(m)$ is not sedentary.
\end{corollary}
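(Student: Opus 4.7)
My plan is to use the $S_m$-symmetry of $G(m)$ to read off the spectrum and the eigenvalue supports, and then to dispatch each of the three vertex orbits (the center $u$, a middle vertex $v$, a pendant $w$) by applying the appropriate earlier result.

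\textbf{Spectrum via symmetry.} I decompose the eigenvectors into an $S_m$-symmetric part (a value $a$ on $u$, a constant $b$ across the middle layer, a constant $c$ across the pendant layer) and an antisymmetric part (vanishing at $u$ and summing to zero across the middle layer). The symmetric part reduces to the $3\times 3$ system $\lambda a = mb$, $\lambda b = a+c$, $\lambda c = b$, whose characteristic equation is $\lambda(\lambda^2-(m+1))=0$; the antisymmetric part decouples into $m-1$ independent length-two path problems and contributes eigenvalues $\pm 1$. Hence $\spec(G(m))=\{0,\pm 1,\pm\sqrt{m+1}\}$ with multiplicities $1,m-1,m-1,1,1$. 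Extracting the eigenvectors, I obtain $\supp_{\be_u}=\{0,\pm\sqrt{m+1}\}$ with $(E_0)_{u,u}=\frac{1}{m+1}$; I see $0\notin\supp_{\be_v}$ since the one-dimensional $0$-eigenspace is spanned by the symmetric vector with $b=0$; and $\supp_{\be_w}=\{0,\pm 1,\pm\sqrt{m+1}\}$ with $(E_0)_{w,w}=\frac{1}{m+1}$ and $(E_1)_{w,w}=\frac{m-1}{2m}$ (the latter obtained by projecting $\be_w$ onto the antisymmetric $1$-eigenspace via the idempotent $I-\frac{1}{m}J$ on the sum-zero subspace of $\mathbb{R}^m$).

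\textbf{Dispatching each orbit.} For $v$, since $0\notin\supp_{\be_v}$, Theorem~\ref{thm:bipartite} immediately yields that $v$ is not sedentary. For $u$, since $|\supp_{\be_u}\cap\mathbb{R}^+|=1$ and $(E_0)_{u,u}=\frac{1}{m+1}<\frac{1}{2}$ for $m\geq 2$, Corollary~\ref{cor18}(1) gives that $u$ is not sedentary. For $w$, I apply Theorem~\ref{bip0} with the singleton $S=\{1\}$: the hypothesis $\alpha=(E_1)_{w,w}=\frac{m-1}{2m}\geq\frac{1}{4}$ holds for all $m\geq 2$, so $S$ satisfies (\ref{S}). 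The only $\mathbb{Z}$-linear relation on $S$ is $\ell_1\cdot 1=0$ with $\ell_1=0$, which trivially forces $D_\varnothing=\ell_1=0$, so the hypothesis of both parts of Theorem~\ref{bip0} is satisfied. By Niven's theorem, $\zeta=\arccos\bigl(-\frac{m}{2(m-1)}\bigr)$ is a rational multiple of $\pi$ only when $m=2$ (where $\zeta=\pi$); hence part (1) applies for $m\geq 3$ and part (2) for $m=2$, and in either case $w$ is not sedentary.

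The main bookkeeping is the spectral decomposition itself: verifying that the $\pm 1$-eigenvectors vanish at $u$ and that the $0$-eigenvector vanishes on the middle layer (so that Theorem~\ref{thm:bipartite} applies to $v$ and Corollary~\ref{cor18}(1) applies to $u$), and computing $(E_1)_{w,w}$ explicitly so that the trivial singleton $S=\{1\}$ meets (\ref{S}) for every $m\geq 2$. Once these are in place, each of the three orbits is closed off by a single invocation of the corresponding result.
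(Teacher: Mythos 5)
Your proposal is correct, and its treatment of the centre $u$ and the degree-two vertices $v$ coincides with the paper's (Corollary~\ref{cor18}(1) and Theorem~\ref{thm:bipartite}, respectively, after the same support computation $\supp_{\be_u}=\{0,\pm\sqrt{m+1}\}$ and $\supp_{\be_v}=\{\pm 1,\pm\sqrt{m+1}\}$). Where you genuinely diverge is at the pendant vertex $w$. The paper works with the full positive support $\{1,\sqrt{m+1}\}$ and must split into three arithmetic cases --- $m+1$ not a perfect square (linear independence over $\Q$, Corollary~\ref{cor18}(2ii)), $\sqrt{m+1}$ an odd integer (equal $2$-adic valuations, Corollary~\ref{cor18}(2i)), and $\sqrt{m+1}$ an even integer (a direct evaluation of $U(\pi)_{w,w}$). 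You instead feed the singleton $S=\{1\}$ into Theorem~\ref{bip0}, having computed $(E_1)_{w,w}=\frac{m-1}{2m}\geq\frac{1}{4}$ for $m\geq 2$, so condition~(\ref{S}) holds and the only integer relation on $S$ is the trivial one $\ell_1=0$, making the hypotheses of either part of Theorem~\ref{bip0} vacuous. This buys a uniform, case-free argument: for a singleton $S$ one does not even need Kronecker's theorem, since $t=\zeta$ already gives $\cos(t)=-\frac{1}{4\alpha}$ exactly and hence $U(\zeta)_{w,w}\leq\frac{1}{2}-2\alpha\leq 0$. The Niven's-theorem digression deciding which part of Theorem~\ref{bip0} applies is harmless but unnecessary for the same reason. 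The one spot worth stating explicitly is the verification $(E_0)_{w,w}=\frac{1}{m+1}<\frac{1}{2}$, which is needed for the standing hypothesis of Section~\ref{sec:bip1} under which the set $S$ in~(\ref{S}) is introduced; your spectral decomposition supplies it, and your value $(E_1)_{w,w}=\frac{m-1}{2m}$ checks out against the projection $\frac{1}{2}\bigl(I-\frac{1}{m}J\bigr)$ on the antisymmetric eigenspace.
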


\begin{proof}
Let $m\geq 2$ and consider the graph $G(m)$ in Figure \ref{fig7} with vertex $u$ of degree $m$, vertex $v$ of degree two and leaf $w$. It can be checked that $\spec(G(m))=\left\{\sqrt{m+1},1^{(m-1)},0,-1^{(m-1)},-\sqrt{m+1}\right\}$, where the exponents denote the corresponding eigenvalue multiplicities. Moreover, we have
\begin{equation*}
\supp_{\be_u}=\{0,\pm \sqrt{m+1}\},\quad \supp_{\be_v}=\{\pm \sqrt{m+1},\pm 1\},\quad \text{and}\quad \supp_{\be_w}=\spec(G(m)).
\end{equation*}
Since $\supp_{\be_u}$ only has one positive eigenvalue, vertex $u$ is not sedentary by Corollary \ref{cor18}(1). Now, since $0\notin\supp_{\be_v}$, Theorem \ref{thm:bipartite} implies that vertex $v$ is not sedentary. Finally, we deal with vertex $w$. Note that $\supp_{\be_w}\cap\mathbb{R}^+=\{1,\sqrt{m+1}\}$. If $m+1$ is not a perfect square, then $\supp_{\be_w}\cap\mathbb{R}^+$ is a linearly independent set over $\mathbb{Q}$. Thus, $w$ is not sedentary by Corollary \ref{cor18}(2ii). On the other hand, if $\sqrt{m+1}$ is an odd integer, then $\supp_{\be_u}\cap\mathbb{R}^+$ consists of integers with equal powers of two in their factorizations. Thus, $w$ is not sedentary by Corollary \ref{cor18}(2i). Lastly, if $\sqrt{m+1}$ is an even integer, then one checks that $U(\pi)_{u,u}<0$, and so vertex $w$ is not sedentary. Combining these observations, we get that no vertex of $G(m)$ is sedentary.
\end{proof}

We will deal more with the subdivision operation in Section \ref{Sec:subd}.

\section{Trees}\label{Sec:tree}

In \cite[Lemma 3.1]{godsil1985inverses}, Godsil showed that a bipartite graph with a unique perfect matching is nonsingular. The proof of this result can be slightly modified to hold for weighted bipartite graphs. Combining this fact with Corollary \ref{cor:intbipgraphs} yields the following result.

\begin{theorem}
\label{thm:bipeveneval}
If $X$ is a bipartite graph with a unique perfect matching, then $X$ is not sedentary for any assignment of edge weights.
\end{theorem}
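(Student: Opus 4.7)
The plan is to reduce to Corollary \ref{cor:intbipgraphs}, which already asserts that any nonsingular weighted bipartite graph fails to be sedentary. So it suffices to prove the weighted analogue of Godsil's lemma cited in the excerpt: if $X$ is a weighted bipartite graph with a unique perfect matching, then $A(X)$ is nonsingular for every nonzero edge-weighting.

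To establish the nonsingularity, write the bipartition as $(U,V)$; since $X$ has a perfect matching $M$, we must have $|U|=|V|=n$, and the adjacency matrix takes the block form
\[
A(X) = \begin{bmatrix} O & B \\ B^T & O \end{bmatrix},
\]
where $B$ is the $n \times n$ weighted bi-adjacency matrix with entries $B_{i,j}=\omega_{u_i,v_j}$ when $u_iv_j\in E(X)$ and $0$ otherwise. The standard determinantal identity $\det A(X) = (-1)^n (\det B)^2$ for anti-block-diagonal matrices of this shape reduces the problem to showing $\det B \neq 0$. The central step is the Leibniz expansion
\[
\det B = \sum_{\sigma} \operatorname{sgn}(\sigma) \prod_{i=1}^n B_{i,\sigma(i)},
\]
in which the nonzero summands are in bijection with the perfect matchings of $X$: a permutation $\sigma$ contributes exactly when every $u_i v_{\sigma(i)}$ is an edge. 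Under the uniqueness hypothesis only one term survives, so $\det B = \pm \prod_{e \in M} \omega_e$, which is nonzero since every edge weight is nonzero by assumption.

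I do not anticipate any real obstacle. The only way the weighted case could fail where the unweighted one succeeds would be through nontrivial cancellation among distinct matching contributions in the Leibniz sum, but uniqueness of $M$ leaves a single surviving term and immediately obviates this concern. Once nonsingularity is in hand, invoking Corollary \ref{cor:intbipgraphs} completes the argument.
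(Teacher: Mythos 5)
Your proposal is correct and follows essentially the same route as the paper: reduce to Corollary \ref{cor:intbipgraphs} by showing that a unique perfect matching forces the weighted adjacency matrix to be nonsingular. The paper simply cites Godsil's lemma and remarks that its proof adapts to the weighted setting, whereas you write out that adaptation explicitly via the Leibniz expansion of $\det B$ (in which the single surviving term $\pm\prod_{e\in M}\omega_e$ is nonzero); this is exactly the intended argument.
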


Since a weighted tree $T$ is bipartite and has at most one perfect matching, we arrive at the next result.

\begin{corollary}
\label{matching}
If $T$ is a tree with a perfect matching, then $T$ is not sedentary for any assignment of edge weights.
\end{corollary}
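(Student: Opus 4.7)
The plan is to derive this corollary as an immediate consequence of Theorem \ref{thm:bipeveneval}, which has already been established for weighted bipartite graphs with a unique perfect matching. So I only need to verify that a weighted tree $T$ fits the hypotheses of that theorem whenever it admits a perfect matching at all.

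First, I would observe that every tree is bipartite; this is standard and requires no further comment. The second ingredient is the classical fact that a tree has \emph{at most} one perfect matching. I would justify this briefly by a leaf-stripping argument: any leaf $\ell$ must be matched in any perfect matching to its unique neighbor $v$, so the edge $\{\ell,v\}$ is forced to lie in every perfect matching of $T$. Removing $\ell$ and $v$ yields a forest (a disjoint union of trees) that must itself possess a perfect matching, and the same argument applies recursively. Hence, if a perfect matching of $T$ exists, it is uniquely determined by this process, independent of the assignment of (nonzero) edge weights.

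Combining these two observations, any weighted tree $T$ with a perfect matching is a weighted bipartite graph with a unique perfect matching. Theorem \ref{thm:bipeveneval} then applies directly and yields that $T$ is not sedentary, regardless of how the nonzero weights are assigned to the edges.

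There is no real obstacle here; the only thing to double-check is that the uniqueness of the perfect matching in a tree is a purely combinatorial (unweighted) statement, so it transfers verbatim to the weighted setting as required by Theorem \ref{thm:bipeveneval}. Consequently, the corollary follows in one line once the two structural facts above are recorded.
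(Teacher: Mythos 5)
Your proposal is correct and follows exactly the paper's route: the paper likewise deduces the corollary from Theorem \ref{thm:bipeveneval} using the facts that a tree is bipartite and has at most one perfect matching. Your leaf-stripping justification of uniqueness is a fine (if slightly more detailed) rendering of the standard fact the paper invokes without proof.
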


If $T$ is a tree without a perfect matching, then the conclusion in Corollary \ref{matching} need not hold. We demonstrate this by finding a weighting of an odd path that produces a sedentary end vertex.

\begin{proposition}
\label{oddpath}
Let $n$ be odd and $u$ be an end vertex of $P_n$ incident to an edge of weight $1/\alpha$ for some $\alpha\neq 0$. If $|\alpha|>\sqrt{(n-1)/2}$ and all other edges weights are one, then vertex $u$ is sedentary.  
\end{proposition}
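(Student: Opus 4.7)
The plan is to apply Theorem \ref{thm:sed}(1) by computing $(E_0)_{u,u}$ explicitly and showing that the hypothesis $|\alpha|>\sqrt{(n-1)/2}$ is exactly the threshold that forces $(E_0)_{u,u}>\tfrac{1}{2}$. Since $n$ is odd the weighted path $P_n$ has no perfect matching, so $A$ is singular (and one can verify directly that the chosen weighting does not change this); in particular $0\in\spec(P_n)$.

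Label the vertices $v_1,v_2,\ldots,v_n$ along the path with $u=v_1$, so the edge $\{v_1,v_2\}$ has weight $1/\alpha$ and all remaining edges have weight one. First I would solve $A\bx=\zero$ by reading off the equations one row at a time. Row $1$ forces $x_2=0$; row $2$ gives $x_3=-x_1/\alpha$; for $3\leq i\leq n-1$ the equation reads $x_{i+1}=-x_{i-1}$; and row $n$ (using that $n$ is odd) becomes $x_{n-1}=0$, which is automatic. This pins down the nullspace as one-dimensional, spanned by the vector $\bx$ with $x_1=1$, $x_{2k}=0$ for all $k\geq 1$, and $x_{2k+1}=(-1)^k/\alpha$ for $1\leq k\leq (n-1)/2$. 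In particular $E_0\be_u\neq\zero$, so $0\in\supp_{\be_u}$.

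Since the nullspace is one-dimensional, $E_0=\bx\bx^T/\|\bx\|^2$, and a direct count of the $(n-1)/2$ nonzero odd-indexed coordinates beyond $x_1$ gives
\[
\|\bx\|^2 \;=\; 1+\frac{n-1}{2\alpha^2}, \qquad
(E_0)_{u,u}\;=\;\frac{x_1^2}{\|\bx\|^2}\;=\;\frac{2\alpha^2}{2\alpha^2+(n-1)}.
\]
The inequality $(E_0)_{u,u}>\tfrac{1}{2}$ simplifies to $2\alpha^2>n-1$, which is exactly the hypothesis $|\alpha|>\sqrt{(n-1)/2}$. Theorem \ref{thm:sed}(1) then yields that $u$ is sedentary, with the explicit quantitative bound
\[
|U(t)_{u,u}|\;\geq\; 2(E_0)_{u,u}-1 \;=\;\frac{2\alpha^2-(n-1)}{2\alpha^2+(n-1)}>0.
\]

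There is no real obstacle here beyond bookkeeping: the entire argument reduces to solving a short linear recurrence in the nullspace and one rational inequality. The only subtlety worth flagging is verifying that the pendant weighting does not enlarge the nullspace or shift $x_1$ off the support, but both follow from the one-dimensionality of $\ker A$ and the explicit form of $\bx$.
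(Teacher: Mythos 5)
Your proof is correct and follows essentially the same route as the paper: both identify the one-dimensional nullspace of the weighted path explicitly (your null vector is just a scalar multiple of the paper's $[\alpha,0,-1,0,1,\ldots]^T$), compute $(E_0)_{u,u}=\tfrac{2\alpha^2}{2\alpha^2+n-1}$, and apply Theorem \ref{thm:sed}(1) via the threshold $(E_0)_{u,u}>\tfrac{1}{2}\iff|\alpha|>\sqrt{(n-1)/2}$. The only difference is that you derive the null vector from the row recurrence rather than stating it outright, which is a harmless elaboration.
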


\begin{proof}
Let $V(P_n)=\{u_1,\ldots , u_n\}$ and $E(X)=\{\{u_j,u_{j+1}\}:j=1,\ldots,n-1\}$. Suppose $u=u_1$ so that $\{u_1,u_2\}$ has weight $1/\alpha$, where $|\alpha|>\sqrt{(n-1)/2}$, and all other edge weights are 1. As $n$ is odd, $\theta=0$ is a simple eigenvalue of $A(P_n)$ with eigenvector $[\alpha,0,-1,0,1,\ldots,0,(-1)^{\frac{n-1}{2}}]^T$. Thus $(E_{0})_{u,u}=\frac{\alpha^2}{\alpha^2+\frac{n-1}{2}}$, and so $(E_{0})_{u,u}>\frac{1}{2}$ if and only if $|\alpha|>\sqrt{(n-1)/2}$. Applying Theorem \ref{thm:sed}(1) yields the desired result.
\end{proof}

Note that if $n=2p-1$ for prime $p$ or $n=2^m-1$ for any $m\geq 2$, then the unweighted $P_n$ ($\alpha=1$) admits PGST between end vertices \cite{Godsil2012}. Thus, there are infinite families of odd paths that yield either a sedentary end vertex or PGST between end vertices, depending on the assigned weights. \\

\begin{theorem}
\label{arbG}
Let $X$ be a weighted graph.
\begin{enumerate}
\item Suppose $X$ has exactly two pendent vertices $u$ and $v$ that share a common vertex. Let $1$ and $\alpha$ be the weights of the edges incident to $u$ and $v$, respectively, for some $\alpha\neq 0$.
\begin{enumerate}
    \item If $|\alpha|\neq 1$, then $u$ or $v$ is sedentary.
    \item Suppose $|\alpha|=1$. Then either $u$ and $v$ are sedentary, or they admit pretty good state transfer. Additionally, if $0$ is an eigenvalue of $A$ with multiplicity at least two and there is a vector $\bv\notin\operatorname{span}\{\bw\}$ in the nullspace of $A$, where $\bw=\be_u-\be_v$ if $\alpha=1$ and $\bw=\be_u+\be_v$ otherwise, such that $\bv^T\be_u\neq 0$, then $u$ and $v$ are sedentary. 
\end{enumerate}
\item If $X$ has at least three pendent vertices that share a common vertex, then $X$ contains at least two sedentary vertices for any assignment of edge weights. In particular, if $X$ is unweighted, then all pendent vertices are sedentary.
\end{enumerate}
\end{theorem}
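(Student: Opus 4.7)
The plan is to exhibit explicit null vectors of $A$ whose rank-one or rank-two subprojections of $E_0$ force $(E_0)_{u,u}>\tfrac12$, at which point Theorem \ref{thm:sed}(1) delivers sedentariness. For part 1(a), let $x$ be the common neighbor of $u$ and $v$; then $A\be_u=\be_x$ and $A\be_v=\alpha\be_x$, so $\alpha\be_u-\be_v\in\ker A$. The rank-one projection onto this vector is dominated by $E_0$ and contributes $\tfrac{\alpha^2}{\alpha^2+1}$ to $(E_0)_{u,u}$ and $\tfrac{1}{\alpha^2+1}$ to $(E_0)_{v,v}$. The former exceeds $\tfrac12$ when $|\alpha|>1$ (yielding $u$ sedentary), while the latter does when $|\alpha|<1$ (yielding $v$ sedentary).

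For part 1(b) with $\alpha=1$, the set $\{u,v\}$ is a twin set of size two, and Lemma \ref{monterde} immediately gives the dichotomy. The case $\alpha=-1$ reduces to $\alpha=1$ via conjugation by the diagonal matrix $D=I-2\be_v\be_v^T$: indeed, $DAD$ is the adjacency matrix of the graph $X'$ obtained from $X$ by flipping the weight of $\{v,x\}$ from $-1$ to $+1$, in which $u$ and $v$ become twins, and the identity $U(t)=D\,U_{X'}(t)\,D$ preserves the moduli $|U(t)_{u,u}|$, $|U(t)_{v,v}|$, and $|U(t)_{u,v}|$. For the additional claim, I would use the two-dimensional subspace $\operatorname{span}\{\bw,\bv\}\subseteq\ker A$ to construct a rank-two subprojection of $E_0$; writing $\bv_\perp$ for the component of $\bv$ orthogonal to $\bw$, this subprojection contributes $\tfrac12+\tfrac{(\bv_\perp^T\be_u)^2}{\|\bv_\perp\|^2}$ to $(E_0)_{u,u}$, which strictly exceeds $\tfrac12$, and Theorem \ref{thm:sed}(1) concludes.

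For part 2, let $u_1,\ldots,u_k$ with $k\geq 3$ be the common-neighbor pendents and $\alpha_1,\ldots,\alpha_k$ their edge weights. The space $N=\{\sum_j c_j\be_{u_j}:\sum_j c_j\alpha_j=0\}$ is a $(k-1)$-dimensional subspace of $\ker A$ contained in $W:=\operatorname{span}\{\be_{u_1},\ldots,\be_{u_k}\}$. Dominating $E_0$ by the projection onto $N$ gives $\sum_{j=1}^k(E_0)_{u_j,u_j}\geq\dim N=k-1$, and since $\be_{u_j}\notin\ker A$ each diagonal entry is strictly less than one. A pigeonhole argument then forces at least two of the $(E_0)_{u_j,u_j}$ to exceed $\tfrac12$: if only one did, for $k=3$ the sum would be strictly below $1+\tfrac12+\tfrac12=2=k-1$, with the contradiction sharpening for larger $k$. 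Theorem \ref{thm:sed}(1) produces two sedentary pendents. When $X$ is unweighted, $\{u_1,\ldots,u_k\}$ is a twin set of size at least three, so Lemma \ref{monterde} renders every member sedentary.

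The chief technical difficulty lies in the additional claim in part 1(b): one must verify that $\bv_\perp^T\be_u\neq 0$ starting from the hypothesis $\bv^T\be_u\neq 0$. Decomposing $\bv$ into components symmetric and antisymmetric under the $u$-$v$ swap (which commutes with $A$ after the $\alpha=-1$ reduction) reveals that $\bv_\perp^T\be_u=\tfrac12(\bv^T\be_u+\bv^T\be_v)$, so in exceptional configurations where this symmetric average cancels, one has $(E_0)_{u,u}=\tfrac12$ exactly and Theorem \ref{thm:sed}(1) alone is inadequate; there one would need to fall back on Theorem \ref{thm:sed}(2) and a Kronecker-type analysis of $\supp_{\be_u}\setminus\{0\}$ to secure sedentariness.
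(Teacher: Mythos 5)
Your proposal follows the same skeleton as the paper's proof---exhibit null vectors supported on the pendants, bound $(E_0)_{u,u}$ from below by the corresponding subprojection, and invoke Theorem~\ref{thm:sed}(1)---but several of your sub-arguments are genuinely different and, in places, cleaner. For part~2 the paper writes down $\be_u-\alpha\be_v$ and $\be_u-\beta\be_w$ for three chosen pendants, Gram--Schmidts them explicitly, and runs a case analysis on $|\alpha|,|\beta|$ versus $1$; your trace-plus-pigeonhole argument ($\sum_j(E_0)_{u_j,u_j}\geq k-1$ with each entry strictly below $1$, hence at least two entries exceed $\tfrac12$) reaches the same conclusion uniformly for all $k\geq3$ with no computation, and is the nicer proof. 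For the dichotomy in 1(b) the paper argues directly that $\be_u\pm\be_v$ is fixed by $U(t)$, so $|U(t)_{u,u}|+|U(t)_{u,v}|\geq1$; your route via Lemma~\ref{monterde} plus the signature switch $D=I-2\be_v\be_v^T$ for $\alpha=-1$ is equally valid, with the caveat that this paper only defines twins for unweighted graphs, so you are implicitly using the weighted form of that lemma from the cited source. Part~1(a) is the same computation in both (the paper's proof uses edge weight $1/\alpha$ rather than the $\alpha$ of the statement, so the roles of $u$ and $v$ are swapped, but nothing changes).

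The substantive point is the additional claim in 1(b), where you have correctly identified a gap that the paper itself glosses over: its entire justification is the sentence ``the assumption implies $(E_\theta)_{u,u}>\tfrac12$.'' Since $E_0\be_u=\tfrac12\bw+\bz$ with $\bz$ the projection of $\be_u$ onto $\bw^\perp\cap\ker A$, one has $(E_0)_{u,u}=\tfrac12+\|\bz\|^2$, and the hypothesis only supplies a null vector $\bv\notin\operatorname{span}\{\bw\}$ with $\bv^T\be_u\neq0$, whose orthogonalization against $\bw$ satisfies $\bv_\perp^T\be_u=\tfrac12\bigl(\bv^T\be_u+\bv^T\be_v\bigr)$ when $\alpha=1$ (and $\tfrac12(\bv^T\be_u-\bv^T\be_v)$ when $\alpha=-1$), exactly as you compute. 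Taking $\bv=\bw+\bz'$ with $\bz'$ any null vector vanishing on $\{u,v\}$ satisfies the stated hypothesis while giving $\bv_\perp^T\be_u=0$, so the hypothesis as written does not force $(E_0)_{u,u}>\tfrac12$. The clean repair is to strengthen the hypothesis to $\bv_\perp^T\be_u\neq0$ (equivalently $\bv^T(\be_u+\be_v)\neq0$ for $\alpha=1$, $\bv^T(\be_u-\be_v)\neq0$ for $\alpha=-1$); your proposed fallback through Theorem~\ref{thm:sed}(2) and Kronecker's theorem is not carried out and is not needed once the hypothesis is stated correctly. Everything else in your write-up is complete and correct.
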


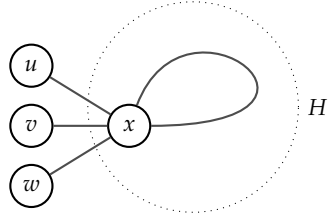
\begin{figure}
\begin{center}
\begin{tikzpicture}[scale=.5,auto=left]
                       \tikzstyle{every node}=[circle, thick, fill=white, scale=0.8]

                \node[draw,minimum size=0.7cm, inner sep=0 pt] (1) at (1.5, 0) {$x$};
		        \node[draw,minimum size=0.7cm, inner sep=0 pt] (3) at (-1.1, 1.6) {$u$};
		        \node[draw,minimum size=0.7cm, inner sep=0 pt] (5) at (-1.1, -1.6) {$w$};
                 \node[draw,minimum size=0.7cm, inner sep=0 pt] (6) at (-1.1, 0) {$v$};
				\node at (6.5,0.5) {$H$};
				\draw[dotted] (3.2,0.5) circle (2.8 cm);
								
				\draw [thick, black!70] (5)--(1)--(3);
                \draw [thick, black!70] (1)--(6);

				\draw[thick, black!70] (1)..controls (3,4) and (8,0)..(1);

				\end{tikzpicture}		
\end{center}
\caption{\label{fig1} A graph $X$ with pendent vertices $u,v$ and $w$ sharing a common neighbor $x$.}
\end{figure}

\begin{proof}
Suppose $P\subseteq V(X)$ is a set of $m\geq 2$ pendent vertices of $X$ that share a common vertex $x$. We may view $X$ as a resulting graph after identifying the central vertex of a star $K_{1,m}$ with a vertex of $H$, where $H$ is the subgraph of $X$ induced by $V(X)\backslash P$, see Figure \ref{fig1} for an illustration when $m=3$. In this case, $x\in V(X)\backslash P$. Without loss of generality, let $1$ and $1/\alpha$ be the weights of the edges $\{u,x\}$ and $\{v,x\}$, respectively for some $\alpha\neq 0$. Since $u$ and $v$ share a common neighbor, $\bv_1=\be_u-\alpha\be_v$ is an eigenvector for $A$ associated with the eigenvalue $\theta=0$.

To prove 1, suppose $m=2$. Then $(E_{0})_{u,u}=\frac{1}{1+\alpha^2}$ and $(E_{0})_{u,u}=\frac{\alpha^2}{1+\alpha^2}$. If $|\alpha|<1$, then $(E_{\theta})_{u,u}>\frac{1}{2}$. But if $|\alpha|>1$, then $(E_{\theta})_{v,v}>\frac{1}{2}$. In either case, Theorem \ref{thm:sed}(1) implies that either $u$ or $v$ is sedentary. This proves 1a. However, if $|\alpha|=1$, then either $\be_u+\be_v$ or $\be_u-\be_v$ is an eigenvector for $A$, and hence must also be an eigenvector for $U(t)$ for all $t$. Thus, $U(t)(\be_u\pm\be_v)=e^{i\theta}(\be_u\pm\be_v)$, from which it follows that $U(t)_{u,u}\pm U(t)_{u,v}=e^{i\theta}$. Hence, $|U(t)_{u,u}|+|U(t)_{u,v}|\geq 1$, and so if PGST does not occur between $u$ and $v$, then $u$ (and $v$) must be sedentary. This proves the first statement in 1b. The assumption in the second statement of 1b implies that $(E_{\theta})_{u,u}>\frac{1}{2}$, and so  $u$ and $v$ are sedentary by Theorem \ref{thm:sed}(1).

To prove 2, let $n\geq 3$. Let $w$ be pendent vertex distinct from $u$ and $v$, and $1/\beta$ be the weight of the edge $\{w,x\}$ for some $\beta\neq 0$. Then $\bv_2=\be_u-\beta\be_w$ is an eigenvector for $A$ associated with the eigenvalue $0$ that is linearly independent with $\bv_2$. If $V=\{\bv_1,\bv_2,\ldots\}$ is a basis for the nullspace of $A$, then applying the Gram-Schmidt process to $V$ yields an orthonormal basis $W=\{\bw_1,\bw_2,\ldots\}$ for the nullspace of $A$, where 
$\bw_1=\frac{1}{\sqrt{1+\alpha^2}}\bv_1$ 
and 
$\bw_2=\frac{1}{\sqrt{\alpha^2+\alpha^2\beta^2+\beta^2}}\left(\frac{\alpha^2}{\sqrt{1+\alpha^2}}\be_u+\frac{\alpha}{\sqrt{1+\alpha^2}}\be_v-  \frac{ \beta (1+\alpha^2)}{ \sqrt{1+\alpha^2}} \be_w\right)$. 
Then
\begin{center}
$(E_0)_{u,u}\geq (\bw_1\bw_1^T)_{u,u}+(\bw_2\bw_2^T)_{u,u}=\frac{1}{1+\alpha^2}+\frac{\alpha^4}{(1+\alpha^2)(\alpha^2+\alpha^2\beta^2+\beta^2)}=\frac{1}{1+\alpha^2}\left(1+\frac{\alpha^4}{\alpha^2+\alpha^2\beta^2+\beta^2}\right)$.
\end{center}
Similarly, $(E_\theta)_{v,v}\geq \frac{\alpha^2}{1+\alpha^2}\left(1+\frac{1}{\alpha^2+\alpha^2\beta^2+\beta^2}\right)$ and $(E_\theta)_{w,w}\geq \frac{\beta^2(1+\alpha^2)}{\alpha^2+\alpha^2\beta^2+\beta^2}.$ We have two cases.
\begin{itemize}
\item Suppose one of $|\alpha|$ or $|\beta|$ is at most 1. Then $|\alpha|\leq 1$ implies that $\frac{1}{1+\alpha^2}\geq \frac{1}{2}$. As $\frac{\alpha^4}{\alpha^2+\alpha^2\beta^2+\beta^2}>0$, we get $(E_\theta)_{u,u}>\frac{1}{2}$. Also, the fact that $|\beta|\leq 1$ yields $(E_\theta)_{u,u}\geq \frac{1+\frac{1}{\alpha^2}}{2+\frac{1}{\alpha^2}}> \frac{1}{2}$. In both cases, $(E_\theta)_{u,u}>\frac{1}{2}$.
\item Suppose one of $|\alpha|$ or $|\beta|$ is at least 1. If $|\alpha|\geq 1$, then $\frac{\alpha^2}{1+\alpha^2}\geq \frac{1}{2}$, and so $(E_\theta)_{v,v}>\frac{1}{2}$. Similarly, if $|\beta|\geq 1$, then $(E_\theta)_{w,w}>\frac{1}{2}$.
\end{itemize}
In either case, Theorem \ref{thm:sed}(1) implies that at least two of $u,v,w$ are sedentary regardless of the values of $\alpha,\beta$. This proves 2.
\end{proof}

By Theorem \ref{arbG}, $\{K_{1,n}:n\geq 3\}$ is another infinite family of trees, with no perfect matching, containing sedentary vertices relative to some weighting. In fact, we can say more for this family:

\begin{corollary}
\label{lem20}
The following hold for the weighted star $K_{1,n}$ for all $n\geq 2$.
\begin{enumerate}
\item Suppose $n=2$. If the two edge weights in $K_{1,2}$ are not equal, then one of its leaves is sedentary. Otherwise, perfect state transfer occurs between the leaves of $K_{1,2}$.
\item Suppose $n\geq 3$. Then $K_{1,n}$ yields at least two sedentary vertices for any assignment of edge weights. 
\end{enumerate}   
\end{corollary}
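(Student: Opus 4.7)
Both items reduce quickly to Theorem \ref{arbG}, with a short direct calculation needed only in the equal-weight subcase of item 1.

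For item 1, rescale so that the two edge weights of $K_{1,2}$ become $1$ and $\alpha$ for some $\alpha\neq 0$. The two leaves $u,v$ are then the pendent vertices sharing the central vertex as common neighbor, so Theorem \ref{arbG}(1) applies. If the two weights differ in magnitude, i.e.\ $|\alpha|\neq 1$, Theorem \ref{arbG}(1a) immediately yields that one of $u,v$ is sedentary. If $|\alpha|=1$, Theorem \ref{arbG}(1b) rules out the scenario in which $u,v$ are simultaneously non-sedentary and not involved in PGST, but we still need to upgrade PGST to PST. My plan here is to observe that up to a diagonal sign-conjugation (to absorb $\alpha=-1$) and a scalar rescaling of time, the weighted matrix $A(K_{1,2})$ is unitarily equivalent to $A(P_3)$. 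PST between the end vertices of $P_3$ at $t=\pi/\sqrt{2}$ is classical, and in our setting is verified by writing down the three $3\times 3$ spectral idempotents associated with the eigenvalues $\{0,\pm\sqrt{2}\}$ (using that $\alpha\be_u-\be_v$ spans the nullspace) and checking that $|U(\pi/\sqrt{2})_{u,v}|=1$. This transfers back to give PST between $u$ and $v$.

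For item 2, $K_{1,n}$ with $n\geq 3$ has $n$ pendent vertices all sharing the central vertex as common neighbor, so Theorem \ref{arbG}(2) applies verbatim and guarantees at least two sedentary vertices for any assignment of edge weights. (In the unweighted case one can alternatively note that the leaves form a twin set of size $\geq 3$ and invoke Lemma \ref{monterde}.)

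The only real obstacle is verifying PST (rather than merely PGST) in the $|\alpha|=1$ subcase of item 1, but because $K_{1,2}$ is essentially the classical PST graph $P_3$ up to a diagonal sign and a scalar, the computation is a one-line spectral check and presents no serious difficulty.
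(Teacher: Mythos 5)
Your proof is correct and follows the paper's intended route: the corollary is stated as an immediate consequence of Theorem \ref{arbG} (item 2 from part (2), item 1 from parts (1a) and (1b)), and your explicit check that the equal-weight $K_{1,2}$ is, up to a diagonal $\pm 1$ conjugation and a time rescaling, just $P_3$ with its classical PST at $t=\pi/\sqrt{2}$ supplies exactly the upgrade from ``sedentary or PGST'' to PST that the paper leaves implicit. One small remark: you read ``not equal'' as ``not equal in magnitude,'' and that is the reading under which the statement is actually true --- for weights $1$ and $-1$ the literal hypothesis of item 1 is satisfied, yet your own sign-conjugation argument shows PST occurs (so neither leaf is sedentary), meaning your version corrects a minor imprecision in the corollary as written.
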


There are several infinite families of unweighted bipartite graphs that are previously known to be sedentary. For example, all leaves of $K_{1,n}$ are sedentary for all $n\geq 3$ and $K_{n,n}$ is sedentary for all $n\geq 3$ by \cite{Monterde2023}. Nevertheless, Corollary \ref{lem20}(2) implies that some leaves in $K_{1,n}$ for $n\geq 3$ are sedentary for any assignment of edge weights on $K_{1,n}$.

Next, we present the following asymptotic results that imply that vertex sedentariness is a common phenomenon in trees and planar graphs. This complements a result of Godsil on the rarity of PST between vertices \cite[Corollary 10.2]{Godsil2012a}. 

\begin{corollary}
\label{alltrees}
The following hold.
\begin{enumerate}
\item Almost all trees contain at least two sedentary vertices for any assignment of edge weights.
\item For a fixed integer $k\geq 3$, almost all unweighted trees contain $k$ sedentary vertices.
\end{enumerate}
\end{corollary}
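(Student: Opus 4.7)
The plan is to reduce both parts to Theorem \ref{arbG}(2), which guarantees that as soon as a graph has at least three pendent vertices sharing a common neighbor, it contains at least two sedentary vertices for any weighting, and in the unweighted case every such pendent vertex in that shared cluster is sedentary. For a tree $T$ let $L_v(T)$ be the number of leaf-neighbors of $v$, let $N_3(T)=|\{v:L_v(T)\geq 3\}|$, and let $M(T)=\sum_{v:L_v(T)\geq 3}L_v(T)$. I would work in the uniform model on labeled trees on $[n]=\{1,\dots,n\}$ (via Cayley's formula); both statements then reduce to the probabilistic claims $\Pr[N_3(T)\geq 1]\to 1$ (for part~1) and, for each fixed $k\geq 3$, $\Pr[M(T)\geq k]\to 1$ (for part~2).

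The workhorse is a classical Cayley count: the number of labeled trees on $[n]$ in which specified vertices $u_1,\dots,u_j$ are all leaves adjacent to a fixed vertex $v$ equals $(n-j)^{n-j-2}$, because deleting those leaves leaves an arbitrary tree on the remaining $n-j$ vertices (which still contains $v$). Summing over $j$-subsets of $[n]\setminus\{v\}$ and dividing by $n^{n-2}$ gives, for fixed $j$, the factorial-moment asymptotic $\bb{E}\!\left[\binom{L_v}{j}\right]=\binom{n-1}{j}(n-j)^{n-j-2}/n^{n-2}\to e^{-j}/j!$, so $L_v$ converges in distribution to a Poisson random variable with mean $1/e$ and $p_3:=\lim_{n\to\infty}\Pr[L_v\geq 3]$ is a strictly positive constant. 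By linearity $\bb{E}[N_3(T)]\sim p_3\tinyspace n$. An analogous two-centre Cayley-style enumeration shows $\Pr[L_u\geq 3,\,L_v\geq 3]\to p_3^{\tinyspace 2}$ for distinct $u,v$, so the covariances between the indicators are $o(1)$, $\operatorname{Var}(N_3(T))=o(n^2)$, and Chebyshev yields $N_3(T)/n\to p_3$ in probability.

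Feeding this back into the graph-theoretic statement: part~1 is immediate, since $\Pr[N_3(T)\geq 1]\to 1$ and Theorem \ref{arbG}(2) supplies two sedentary vertices for any edge weighting whenever a triple-leaf cluster exists. For part~2, use the trivial bound $M(T)\geq 3 N_3(T)$ to conclude $\Pr[M(T)\geq k]\to 1$; the unweighted clause of Theorem \ref{arbG}(2) then says every leaf counted by $M(T)$ is sedentary, so with probability $1-o(1)$ the tree carries at least $k$ sedentary vertices. The main obstacle I anticipate is the covariance step: two-centre Cayley enumeration with two disjoint leaf-clusters requires a careful simultaneous contraction, after which the count takes the form $(n-j_1-j_2)^{n-j_1-j_2-2}$ times elementary factors that one must verify produce the factorizing limit $e^{-(j_1+j_2)}/(j_1!\tinyspace j_2!)$; everything else is routine asymptotic bookkeeping built on Cayley's formula and Chebyshev.
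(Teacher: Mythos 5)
Your reduction to Theorem \ref{arbG}(2) is exactly the paper's, but you supply the combinatorial input yourself, whereas the paper simply adapts \cite[Theorem 9]{godsil2025quantum} to conclude that almost all trees have $K_{1,3}$ (resp.\ $K_{1,k}$) as a limb and stops there. Your replacement is a clean, self-contained second-moment argument for uniformly random \emph{labeled} trees: the contraction bijection giving $(n-j)^{n-j-2}$ trees with $j$ prescribed leaves at a fixed vertex is correct, the factorial moments $\binom{n-1}{j}(n-j)^{n-j-2}/n^{n-2}\to e^{-j}/j!$ do identify the limit law of $L_v$ as Poisson with mean $1/e$, and the two-centre computation factorizes as you predict, so Chebyshev gives $N_3(T)/n\to p_3>0$ in probability. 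Your part~2 is also a genuine simplification: rather than invoking a $K_{1,k}$-limb for each $k$ as the paper does, you get $k$ sedentary leaves for free from $N_3(T)\to\infty$ together with the unweighted clause of Theorem \ref{arbG}(2) (correctly read as ``all pendent vertices \emph{in the cluster} are sedentary''). Two caveats. First, ``almost all trees'' in the Schwenk/Godsil limb literature usually refers to \emph{unlabeled} trees; if that is the intended model, your Cayley-based count proves a different (easier) statement and you would need Schwenk's limb theorem, which is essentially what the paper's citation delivers. Second, the gain you buy is independence from the external reference at the cost of a page of routine but nontrivial asymptotics (in particular the disjointness bookkeeping in the two-centre count, where the degenerate configurations with $v\in S_1$ must be checked to be lower order); the paper's one-line citation is shorter but opaque. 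Both proofs are valid.
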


\begin{proof}
Adapting the proof of \cite[Theorem 9]{godsil2025quantum}, we get that almost all trees $T$ have $K_{1,3}$ as a limb. That is, almost all trees have the same form as the graph $X$ in Figure \ref{fig1} where $H$ is a tree. Hence, the first statement in Theorem \ref{arbG}(2) yields 1. Next, we apply the same argument but now with the unweighted $K_{1,k}$ as a limb of in lieu of $K_{1,3}$. In this case, applying the second statement in Theorem \ref{arbG}(2) yields 2.
\end{proof}

Corollary \ref{alltrees} strengthens a previous observation about trees, which is that there are infinitely many unweighted trees (with maximum degree three) that contain sedentary vertices \cite{monterde2023new}. Corollary \ref{alltrees} also complements two fundamental results in the literature on trees. The first is that all unweighted trees on $n\geq 4$ vertices do not admit PST \cite[Theorem 9]{cou7}, and the second is that almost all unweighted trees have a property analogous to a sedentary vertex, namely, they contain a sedentary `quantum state' of the form $\frac{1}{\sqrt{2}}(\be_u+\be_v)$ \cite[Theorem 14]{godsil2025quantum}.

We also note that while almost all unweighted trees contain sedentary vertices, there are infinite families of trees where no vertex is sedentary, such as paths admitting PGST between end vertices, as well as subdivided stars as in Corollary \ref{subdstar}.

We further strengthen Corollary \ref{alltrees} with the following result.\\

\begin{corollary}
\label{planar}
The following hold.
\begin{enumerate}
\item Almost all connected planar graphs contain at least two sedentary vertices for any assignment of edge weights.
\item For a fixed integer $k\geq 3$, almost all connected unweighted planar graphs contain $k$ sedentary vertices.
\end{enumerate}
\end{corollary}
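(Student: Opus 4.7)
The plan is to mirror the proof of Corollary \ref{alltrees} for trees, swapping the underlying asymptotic combinatorial statement for its planar analogue and then invoking Theorem \ref{arbG}(2) verbatim.

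Recall the notion of a \emph{pendant appearance}: a rooted connected graph $(H,r)$ appears as a pendant copy in $G$ if there is an induced subgraph $H'\subseteq G$ with $H'\cong H$ such that $V(H')$ is joined to $V(G)\setminus V(H')$ by exactly one edge, and this edge is incident to the image of $r$. The theorem of McDiarmid, Steger, and Welsh on random planar graphs (stated more generally for bridge-addable graph classes) asserts that for every fixed rooted connected planar graph $(H,r)$, a uniformly random labeled connected planar graph on $n$ vertices contains $\Theta(n)$ pendant copies of $(H,r)$ with probability tending to $1$ as $n\to\infty$.

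Applying this with $(H,r)=(K_{1,3},\text{center})$, we obtain that almost all connected planar graphs $G_n$ contain a vertex $x$ having three pendent vertices $u,v,w$ of $G_n$ as neighbors, while $x$ has exactly one additional neighbor in $V(G_n)\setminus\{u,v,w\}$. This is precisely the configuration in Figure \ref{fig1} and meets the hypothesis of Theorem \ref{arbG}(2), yielding at least two sedentary vertices for any assignment of edge weights. This proves part 1. For part 2, rerun the same argument with $(H,r)=(K_{1,k},\text{center})$ to produce $k$ pendent vertices sharing a common neighbor in $G_n$; since $G_n$ is unweighted, the ``in particular'' clause of Theorem \ref{arbG}(2) then guarantees that all $k$ of these pendent vertices are sedentary.

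The single nontrivial step is citing the pendant appearance theorem in the correct formulation — one must specify the uniform labeled connected planar model and verify that the theorem, as stated in the random graph literature, is formally applicable to this class (it is, via bridge-addability). Beyond locating that citation, no new spectral or combinatorial work is required: the conclusion is an immediate corollary of Theorem \ref{arbG}(2), in direct parallel with the proof of Corollary \ref{alltrees}.
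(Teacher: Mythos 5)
Your argument is correct and follows essentially the same route as the paper: both reduce the statement to the asymptotic fact that almost all connected planar graphs contain a pendant copy of $K_{1,3}$ (resp.\ $K_{1,k}$) rooted at its centre --- the paper by adapting \cite[Theorem 8]{godsil2025quantum}, you by citing the McDiarmid--Steger--Welsh pendant-appearance theorem directly --- and then apply Theorem \ref{arbG}(2) exactly as in Corollary \ref{alltrees}. No gap; the two proofs differ only in which reference supplies the combinatorial input.
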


\begin{proof}
Adapting the proof of \cite[Theorem 8]{godsil2025quantum}, we get that almost all connected planar graphs have the same form as the graph $X$ in Figure \ref{fig1}, where $H$ is a connected planar graph. Thus, the first statement in Theorem \ref{arbG}(2) yields 1. The same argument in the proof of Corollary \ref{alltrees}(2) yields 2.
\end{proof}

\section{Bipartite double}\label{Sec:bipdoub}

Let $X$ be a weighted graph. The \textit{bipartite double} of $X$, denoted $K_2\times X$, is the weighted graph with vertex set $\{0,1\} \times V(X)$ and adjacency matrix $A(K_2)\otimes A(X)$. We say that vertices $(0,u)$ and $(1,u)$ of $K_2\times X$ are the two copies of vertex $u$ in $X$. Note that $K_2\times X$ is always a bipartite graph. Moreover, $K_2\times X$ is connected if and only if $X$ is not bipartite. It is known that 
\begin{equation}
\label{spectimes}
\spec(K_2\times X)=\{\pm\lambda:\lambda\in\spec(X)\}.
\end{equation}
Thus, if 0 is an eigenvalue of $X$ with multiplicity $m$, then it has multiplicity $2m$ in  $K_2\times X$. In particular, if $\supp_{\be_u}(X)$ and $\supp_{\be_{(0,u)}}(K_2\times X)$ denote the eigenvalue supports of $u$ in $X$ and its copy $(0,u)$ in $K_2\times X$ respectively, then for any vertex $u$ of $X$, we have
\begin{equation}
\label{spectimes1}
\supp_{\be_{(0,u)}}(K_2\times X)=\{\pm\lambda:\lambda\in\supp_{\be_u}(X)\}.
\end{equation}
Moreover, if $E_\lambda$ is an orthogonal projection matrix for $A(X)$, then $(I_2-\frac{1}{2}J)\otimes E_0$ and $\frac{1}{2}J\otimes E_0$ are orthogonal projection matrices associated with the eigenvalues $-\lambda$ and $\lambda$ of $A(K_2\times X)$, respectively.

If $0\in\supp_{\be_u}(X)$, then the following theorem demonstrates that the sedentariness of $u$ in $X$ and its two copies in $K_2\times X$ are equivalent whenever $(E_0)_{u,u}\geq \frac{1}{2}$.

\begin{theorem}
\label{0times}
Let $X$ be a weighted nonbipartite graph with vertex $u$ such that $0\in\supp_{\be_u}(X)$. If $(E_0)_{u,u}>\frac{1}{2}$, then vertex $u$ and its two copies are sedentary in $X$ and $K_2\times X$, respectively. Moreover, if $(E_0)_{u,u}=\frac{1}{2}$, then vertex $u$ is sedentary in $X$ if and only if its two copies are also sedentary in $K_2\times X$.
\end{theorem}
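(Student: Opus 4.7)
The plan is to compute the diagonal entries of the spectral idempotents of $K_2\times X$ at the vertex $(0,u)$, observe that the entry at eigenvalue $0$ equals $(E_0)_{u,u}$, and then invoke Theorem~\ref{thm:sed} in each case. By the projection formulas stated immediately before the theorem, the idempotent of $A(K_2\times X)$ at $\theta=0$ is $F_0=\tfrac{1}{2}J\otimes E_0+(I_2-\tfrac{1}{2}J)\otimes E_0=I_2\otimes E_0$ (both $\pm1$ eigenspaces of $A(K_2)$ contribute through $E_0$), so $(F_0)_{(0,u),(0,u)}=(F_0)_{(1,u),(1,u)}=(E_0)_{u,u}$; a similar calculation shows that for $\theta\neq 0$, $(F_\theta)_{(0,u),(0,u)}=\tfrac{1}{2}\bigl[(E_\theta)_{u,u}+(E_{-\theta})_{u,u}\bigr]$, under the convention $(E_\lambda)_{u,u}=0$ when $\lambda\notin\spec(X)$. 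Consequently, if $(E_0)_{u,u}>\tfrac{1}{2}$, the same inequality holds for the diagonal entry of $F_0$ at each of $(0,u)$ and $(1,u)$, and Theorem~\ref{thm:sed}(1) applied at $\theta=0$ immediately yields sedentariness of $u$ in $X$ and of both copies of $u$ in $K_2\times X$.

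For the second statement, $(E_0)_{u,u}=\tfrac{1}{2}=(F_0)_{(0,u),(0,u)}$, and Theorem~\ref{thm:sed}(2) characterizes sedentariness of $u$ and of $(0,u)$ by a Kronecker-type integer condition on the nonzero part of the eigenvalue support. Writing $\Lambda=\supp_{\be_u}(X)\setminus\{0\}$, we have $\supp_{\be_{(0,u)}}(K_2\times X)\setminus\{0\}=\Lambda\cup(-\Lambda)$, and the statement reduces to proving the equivalence of:
\begin{itemize}
\item[(i)] there exist integers $(\ell_\lambda)_{\lambda\in\Lambda}$ with $\sum_\lambda\ell_\lambda\lambda=0$ and $\sum_\lambda\ell_\lambda$ odd;
\item[(ii)] there exist integers $(m_\theta)_{\theta\in\Lambda\cup(-\Lambda)}$ with $\sum_\theta m_\theta\theta=0$ and $\sum_\theta m_\theta$ odd.
\end{itemize}
The forward implication (i)$\Rightarrow$(ii) is trivial: extend the tuple $(\ell_\lambda)$ to $\Lambda\cup(-\Lambda)$ by setting it to $0$ on $(-\Lambda)\setminus\Lambda$, which preserves both the vanishing weighted sum and the parity.

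The reverse implication (ii)$\Rightarrow$(i) is the main obstacle, because $\Lambda$ need not be closed under negation (nonbipartiteness of $X$ does not prohibit some individual pairs $\pm\lambda$ from lying in $\Lambda$), forcing a case split. The construction is to collapse each pair $(m_\lambda,m_{-\lambda})$ with $|\lambda|=\mu$ into an $\ell$-contribution as follows: set $\ell_\mu=m_\mu-m_{-\mu}$ if only $\mu\in\Lambda$, set $\ell_{-\mu}=m_{-\mu}-m_\mu$ if only $-\mu\in\Lambda$, and set $\ell_\mu=m_\mu,\ell_{-\mu}=m_{-\mu}$ if both $\pm\mu\in\Lambda$. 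Then by inspection $\sum_\lambda\ell_\lambda\lambda=\sum_\theta m_\theta\theta=0$, and in each sub-case the discrepancy $\sum_\theta m_\theta-\sum_\lambda\ell_\lambda$ equals $2m_{-\mu}$, $2m_\mu$, or $0$ respectively, hence is always even. Thus the odd-parity witness from (ii) transfers to one for (i), establishing the equivalence and the second statement.
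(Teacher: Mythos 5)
Your proposal is correct and follows essentially the same route as the paper: compute $F_0=I_2\otimes E_0$ so that the diagonal entry at each copy of $u$ equals $(E_0)_{u,u}$, apply Theorem~\ref{thm:sed}(1) when $(E_0)_{u,u}>\tfrac12$, and when $(E_0)_{u,u}=\tfrac12$ translate sedentariness through the Kronecker-type parity condition of Theorem~\ref{thm:sed}(2) by extending integer tuples by zero in one direction and collapsing the pairs $(m_\mu,m_{-\mu})$ in the other. Your explicit case split according to whether one or both of $\pm\mu$ lie in $\supp_{\be_u}(X)$ is actually a bit more careful than the paper's treatment, which writes the collapsed sum without addressing the possibility that $\Lambda$ already contains a symmetric pair.
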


\begin{proof}
Let $0\in\supp_{\be_u}(X)$. Then $0\in\supp_{\be_{(0,u)}}(K_2\times X)$ and the orthogonal projection matrix associated with the eigenvalue $0$ of $A(K_2\times X)$ is given by $F_0=\big((I_2-\frac{1}{2}J)\otimes E_0\big) +\big(\frac{1}{2}J\otimes E_0\big)=I_2\otimes E_0$. Consequently, 
\begin{center}
$(F_0)_{(0,u),(0,u)}=(F_0)_{(1,u),(1,u)}=(E_0)_{u,u}$ 
\end{center}
for any vertex $u$ of $X$. Thus, if $(E_0)_{u,u}>\frac{1}{2}$, then we obtain the desired result by combining the above equation and Theorem \ref{thm:sed}(1). To prove the second statement, let $(E_0)_{u,u}=\frac{1}{2}$. First, suppose that vertex $u$ is sedentary in $X$. By Theorem \ref{thm:sed}(2), there exists a tuple of integers $(\ell_\lambda)_{\lambda\in\supp_{\be_u}(X)\backslash\{0\}}$ such that
\begin{center}
$\displaystyle\sum_{\lambda\in\supp_{\be_u}(X)\backslash\{0\}}\lambda\ell_\lambda=0\quad  \text{and} \quad \displaystyle\sum_{\lambda\in\supp_{\be_u}(X)\backslash\{0\}}\ell_\lambda\quad \text{is odd}.$
\end{center}
Now, let $(k_\lambda)_{\lambda\in\supp_{\be_{(0,u)}}(K_2\times X)\backslash\{0\}}$ be a tuple of integers such that $k_\lambda=\ell_\lambda$ for all $\lambda\in\supp_{\be_u}(X)\backslash\{0\}$ and $k_\lambda=0$ otherwise. Then from the above equation, we get that
\begin{center}
$\displaystyle\sum_{\lambda\in\supp_{\be_{(0,u)}}(K_2\times X)\backslash\{0\}}\lambda k_\lambda=\displaystyle\sum_{\lambda\in\supp_{\be_u}(X)\backslash\{0\}}\lambda\ell_\lambda=0\quad \text{and} \quad \displaystyle\sum_{\lambda\in\supp_{\be_{(0,u)}}(K_2\times X)\backslash\{0\}}k_\lambda=\displaystyle\sum_{\lambda\in\supp_{\be_u}(X)\backslash\{0\}}\ell_\lambda\quad \text{is odd}.$
\end{center}
Invoking Theorem \ref{thm:sed}(2), the two copies of vertex $u$ in $K_2\times X$ are also sedentary. Next, suppose that vertex $u$ is not sedentary in $X$. By Theorem \ref{thm:sed}(2), for any tuple of integers $(\ell_\lambda)_{\lambda\in\supp_{\be_u}(X)\backslash\{0\}}$, we have
\begin{equation}
\label{lasteq}
\displaystyle\sum_{\lambda\in\supp_{\be_u}(X)\backslash\{0\}}\lambda\ell_\lambda=0\quad  \text{implies that} \quad \displaystyle\sum_{\lambda\in\supp_{\be_u}(X)\backslash\{0\}}\ell_\lambda\quad \text{is even}.
\end{equation}
Now, suppose $\sum_{\mu\in\supp_{\be_{(0,u)}}(K_2\times X)\backslash\{0\}}\mu k_\lambda=0$. Since we may write 
\begin{center}
$\displaystyle\sum_{\mu\in\supp_{\be_{(0,u)}}(K_2\times X)\backslash\{0\}}\mu k_\lambda=\displaystyle\sum_{\lambda\in\supp_{\be_u}(X)\backslash\{0\}}\lambda(k_\lambda-k_{-\lambda})$,
\end{center}
if we let $\ell_\lambda=(k_\lambda-k_{-\lambda})$, then (\ref{lasteq}) implies that 
$\sum_{\mu\in\supp_{\be_{(0,u)}}(K_2\times X)\backslash\{0\}} ( k_\lambda - -k_{-\lambda})$ is even, which in turn implies that
$\sum_{\mu\in\supp_{\be_{(0,u)}}(K_2\times X)\backslash\{0\}}k_\lambda$ is even. 

Applying Theorem \ref{thm:sed}(2) once again, we get that the two copies of vertex $u$ in $K_2\times X$ are not sedentary.
\end{proof}

We illustrate Theorem \ref{0times} in the following corollaries. In what follows, $K_{n_1,\ldots,n_m}$ denotes a complete multipartite graph with $m$ parts of sizes $n_1,\ldots,n_m$.

\begin{corollary}
\label{cor27}
Let $m\geq 3$. If $X$ is the bipartite double of $K_{n_1,\ldots,n_m}$, then the following hold. 
\begin{enumerate}
\item If $n_j\geq 3$ for some $j\in\{1,\ldots,m\}$, then the vertices of $K_{n_1,\ldots,n_m}$ in the part of size $n_j$ and their copies in $X$ are sedentary. 
\item If $n_\ell\geq 3$ for all $\ell\in\{1,\ldots,m\}$, then $K_{n_1,\ldots,n_m}$ and $X$ are sedentary graphs.
\item Suppose $n_\ell=2$ for all $\ell\in\{1,\ldots,m\}$. If $m$ is odd, then $K_{n_1,\ldots,n_m}$ and $X$ are sedentary graphs. Otherwise, $K_{n_1,\ldots,n_m}$ and $X$ are not sedentary graphs.
\end{enumerate}
\end{corollary}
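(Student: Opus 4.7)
The plan is to pair the twin structure of $K_{n_1,\ldots,n_m}$ with the sedentariness transfer given by Theorem~\ref{0times}. Since $m\geq 3$, $K_{n_1,\ldots,n_m}$ contains a triangle and is therefore nonbipartite, which guarantees that $X=K_2\times K_{n_1,\ldots,n_m}$ is connected and that Theorem~\ref{0times} applies. In each of the three cases the game is to compute, or at least bound, the diagonal entry $(E_0)_{u,u}$ of the projection onto the $0$-eigenspace of $K_{n_1,\ldots,n_m}$: that single number decides whether Theorem~\ref{thm:sed}(1) gives sedentariness outright or whether we drop into the boundary regime of Theorem~\ref{thm:sed}(2), and it simultaneously dictates how the conclusion lifts to $X$ via Theorem~\ref{0times}.

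For parts 1 and 2, fix $u$ in a part of size $n_j\geq 3$. The vertices of that part form an independent twin set $W$ with $|W|=n_j$, so equation~\eqref{twins} expresses $E_0$ as $\bigl(I_{|W|}-\tfrac{1}{|W|}J_{|W|}\bigr)\oplus O$ plus a PSD correction $F$. This yields $(E_0)_{u,u}\geq 1-1/n_j\geq 2/3>1/2$, so Theorem~\ref{thm:sed}(1) gives sedentariness of $u$ in $K_{n_1,\ldots,n_m}$ and Theorem~\ref{0times} lifts it to both copies of $u$ in $X$. Part~2 is then the immediate special case in which every vertex lies in a part of size at least~$3$.

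Part~3 is the cocktail party graph $K_{m\times 2}$, with adjacency matrix $(J_m-I_m)\otimes J_2$ and spectrum $\{2(m-1),\,0^{(m)},\,-2^{(m-1)}\}$. The $m$ twin-difference vectors $\be_{(j,1)}-\be_{(j,2)}$ are linearly independent elements of $\ker A$, and a dimension count shows they span it; averaging their rank-one projections gives the sharp value $(E_0)_{u,u}=\tfrac12$, and then $\sum_\lambda E_\lambda=I$ together with the principal eigenvector $\tfrac{1}{\sqrt{2m}}\mathbf 1$ forces $(E_{2(m-1)})_{u,u}=\tfrac{1}{2m}$ and $(E_{-2})_{u,u}=\tfrac{m-1}{2m}$, so $\supp_{\be_u}=\{2(m-1),\,0,\,-2\}$ for every vertex. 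Since $(E_0)_{u,u}=\tfrac12$ exactly, we invoke Theorem~\ref{thm:sed}(2) with $\theta=0$: the relation $2(m-1)\ell_1-2\ell_2=0$ forces $\ell_2=(m-1)\ell_1$, so $\ell_1+\ell_2=m\ell_1$. For $m$ odd, $\ell_1=1$ gives the odd sum $m$, violating the parity condition, so $u$ is sedentary; for $m$ even, $m\ell_1$ is always even, so the condition holds and $u$ is not sedentary. Vertex-transitivity of $K_{m\times 2}$ extends this verdict to the whole graph, and then the iff-clause of Theorem~\ref{0times} at $(E_0)_{u,u}=\tfrac12$ transfers the same dichotomy to $X$.

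The main obstacle is Part~3: the lower bound $(E_0)_{u,u}\geq \tfrac12$ from~\eqref{twins} is not enough on its own, since one must land exactly at $\tfrac12$ in order to enter Theorem~\ref{thm:sed}(2); this in turn requires verifying that the twin-difference vectors actually span $\ker A$, so that the $F$-correction in~\eqref{twins} vanishes and contributes no additional diagonal mass. Once that exact identity is in hand, the parity-of-$m$ dichotomy comes out of a one-variable Kronecker computation and propagates to $X$ automatically.
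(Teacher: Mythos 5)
Your proof is correct and follows the same overall strategy as the paper: use the twin structure to pin down $(E_0)_{u,u}$, apply Theorem~\ref{thm:sed}, and lift to the bipartite double via Theorem~\ref{0times}. The only substantive difference is in Part~3, where the paper simply cites the known fact that the cocktail party graph $K_{2,\ldots,2}$ is sedentary if and only if $m$ is odd (Corollary~4 of \cite{monterde2023new}), whereas you rederive it from scratch: you verify that the twin-difference vectors span $\ker A$ so that $(E_0)_{u,u}=\tfrac12$ exactly, compute the full eigenvalue support $\{2(m-1),0,-2\}$, and run the parity test of Theorem~\ref{thm:sed}(2), correctly noting that the relation $2(m-1)\ell_1-2\ell_2=0$ forces $\ell_1+\ell_2=m\ell_1$. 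This makes your argument self-contained at the cost of a short spectral computation; both routes then transfer the dichotomy to $X$ through the $(E_0)_{u,u}=\tfrac12$ clause of Theorem~\ref{0times} in the same way.
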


\begin{proof}
Since $n\geq 3$, $K_{n_1,\ldots,n_m}$ is not bipartite, and so $K_2\times K_{n_1,\ldots,n_m}$ is connected. To prove 1, suppose $n_j\geq 3$ for some $j\in\{1,\ldots,m\}$. Then the vertices in the part of size $n_j$, which denote by $T$, form a twin set with at least three vertices that are pairwise non-adjacent. Invoking (\ref{twins}), we get that $0\in\supp_{\be_u}(K_{n_1,\ldots,n_m})$ and $(E_0)_{u,u}\geq \frac{2}{3}$ for each $u\in T$. Invoking Theorems \ref{thm:sed}(1) and \ref{0times}, we get that each $u\in T$ is sedentary in $K_{n_1,\ldots,n_m}$ and $K_2\times K_{n_1,\ldots,n_m}$. This proves 1. Note that 2 is immediate from 1. Finally, to prove 3, suppose $n_\ell=2$ for all $\ell\in\{1,\ldots,m\}$. In this case, $K_{n_1,\ldots,n_m}$ is simply the cocktail party graph $K_{2,\ldots,2}$ on $2m$ vertices, which is known to be sedentary if and only if $m$ is odd \cite[Corollary 4]{monterde2023new}. In this graph, every pair of non-adjacent vertices form a twin set of size two, and so $0\in\supp_{\be_u}(K_{n_1,\ldots,n_m})$ and $(E_0)_{u,u}=\frac{1}{2}$ for each vertex $u$ of $K_{2,\ldots,2}$. Combining these facts with Theorem \ref{0times} establishes 3.
\end{proof}

Let $X$ and $Y$ be unweighted graphs. By $X\vee Y$, we mean the unweighted graph obtained from $X$ and $Y$ by adding all edges with one endpoint in $X$ and the other in $Y$. If we denote the empty graph on $m$ vertices by $O_m$, then a connected \textit{threshold graph} is an unweighted graph that is either of the form:
\begin{center}
$((((O_{m_1}\vee K_{m_2})\cup O_{m_3})\vee K_{m_4})\ldots)\vee K_{m_{2k}}\quad $ or
$\quad ((((K_{m_1}\cup O_{m_2})\vee K_{m_3})\cup O_{m_4})\ldots)\vee K_{m_{2k+1}}$,
\end{center}
where each $m_j$ is an integer such that $m_1\geq 2$ and $m_j\geq 1$ for all $j\geq 2$ \cite{Severini}. The vertex subsets formed by each of $O_{m_j}$ and $K_{m_j}$ are called cells with size $m_j$. If $m_j\geq 3$, then the vertices in the cell of size $m_j$ form a twin set, and hence sedentary by Lemma \ref{monterde}. 
Applying Theorem \ref{0times}, we get the following result.

\begin{corollary}
Let $X$ be an unweighted threshold graph with a cell of size $m_j\geq 3$. Then each vertex in this cell is sedentary in $X$ and $K_2\times X$.
\end{corollary}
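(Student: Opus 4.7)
The plan is to handle sedentariness in $X$ and in $K_2\times X$ in two separate steps, both anchored on the observation from the paragraph immediately preceding the statement: a cell of size $m_j\geq 3$ in a threshold graph is a twin set $W$ of size $m_j$, where the induced subgraph on $W$ is either $O_{m_j}$ or $K_{m_j}$.

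For sedentariness in $X$, my plan is to appeal directly to Lemma \ref{monterde}. Since $|W|\geq 3$, the lemma forces every vertex of $W$ to be sedentary in $X$ with essentially no spectral computation. An alternative route would read off $(E_\lambda)_{u,u}\geq 1-1/m_j\geq 2/3$ for the dominant eigenvalue $\lambda\in\{0,-1\}$ associated to $W$ via (\ref{twins}) and then invoke Theorem \ref{thm:sed}(1), but invoking Lemma \ref{monterde} directly is shorter.

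For sedentariness in $K_2\times X$, the plan is to apply Theorem \ref{0times}. Its hypotheses are that $X$ is nonbipartite and that $0\in\supp_{\be_u}(X)$ with $(E_0)_{u,u}>\tfrac{1}{2}$ for each $u\in W$. The decisive case is when the cell is an independent cell $O_{m_j}$: then (\ref{twins}) with $\lambda=0$ yields $(E_0)_{u,u}\geq 1-1/m_j\geq 2/3$, so $(E_0)_{u,u}>\tfrac{1}{2}$ and Theorem \ref{0times} delivers sedentariness of both copies $(0,u)$ and $(1,u)$ in $K_2\times X$. Nonbipartiteness of $X$ would follow from the presence of any clique cell attached by a $\vee$ operation, which forces a triangle in $X$.

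The main obstacle I anticipate is the clique-cell case, where $W=K_{m_j}$ with $m_j\geq 3$. Here (\ref{twins}) supplies a large diagonal entry at $\lambda=-1$, namely $(E_{-1})_{u,u}\geq 2/3$, rather than at $\lambda=0$, so Theorem \ref{0times} does not apply verbatim; indeed, $0$ need not even lie in $\supp_{\be_u}(X)$ in this case. To close the argument one would need a separate treatment---for instance, verifying from the threshold-graph construction that contributions from other (independent) cells force $0\in\supp_{\be_u}(X)$ with $(E_0)_{u,u}>\tfrac{1}{2}$, or developing a clique-twin analogue of the spectral projection formula for $K_2\times X$ that underlies Theorem \ref{0times}. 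This is the step I expect to require the most care.
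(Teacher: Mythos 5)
Your handling of the independent-cell case is exactly the paper's argument: Lemma \ref{monterde} gives sedentariness in $X$, and for a cell inducing $O_{m_j}$ the formula (\ref{twins}) gives $0\in\supp_{\be_u}$ with $(E_0)_{u,u}\geq 1-\frac{1}{m_j}\geq\frac{2}{3}>\frac{1}{2}$, so Theorem \ref{0times} transfers sedentariness to both copies in $K_2\times X$. The paper's justification for the corollary is precisely this one-line appeal to Lemma \ref{monterde} and Theorem \ref{0times}, with no separate discussion of clique cells.

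The obstacle you flag for clique cells is not merely a step requiring more care --- it is fatal, and you should not try to close it. Take $X=O_2\vee K_3=K_5\backslash e$, a threshold graph whose clique cell has size $3$. Its spectrum is $\{1+\sqrt{7},\,0,\,-1^{(2)},\,1-\sqrt{7}\}$, and the unique $0$-eigenvector $\be_1-\be_2$ is supported on the $O_2$ cell, so $0\notin\supp_{\be_v}$ for every $v$ in the $K_3$ cell. By the first statement of Theorem \ref{bipdoub}, the two copies of such a $v$ are \emph{not} sedentary in $K_2\times X$; the paper itself records exactly this in its corollary on $K_n\backslash e$, where the vertices not incident to $e$ (which form the clique cell of size $n-2$) are sedentary in $K_n\backslash e$ but not in $K_2\times(K_n\backslash e)$. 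Hence your proposed repair --- forcing $0\in\supp_{\be_u}$ with $(E_0)_{u,u}>\frac{1}{2}$ from the threshold structure, or a clique-twin analogue of Theorem \ref{0times} --- cannot succeed: the claim is simply false for clique cells, and the corollary must be read as applying to cells inducing $O_{m_j}$. Your instinct that this case needed separate treatment was correct; the resolution is a counterexample rather than a proof. (A further small point: a threshold graph can be bipartite, e.g.\ the star $O_m\vee K_1$, in which case $K_2\times X$ is disconnected and Theorem \ref{0times} does not formally apply, though the conclusion is then immediate since $K_2\times X$ is two copies of $X$.)
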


For the case $0\notin\supp_{\be_u}(X)$, we have the following result.

\begin{theorem}
\label{bipdoub}
Let $X$ be a weighted non-bipartite graph with vertex $u$. If $0\notin\supp_{\be_u}(X)$, then the two copies of $u$ in $K_2\times X$ are not sedentary. 
Additionally, if $X$ is nonsingular, then $K_2\times X$ is not sedentary.    
\end{theorem}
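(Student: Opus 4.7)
The plan is to reduce Theorem \ref{bipdoub} entirely to Theorem \ref{thm:bipartite}, using the relationship in (\ref{spectimes1}) between the eigenvalue supports of $u$ in $X$ and its copies in $K_2 \times X$. The central observation is that $K_2 \times X$ is bipartite by construction, and, since $X$ is non-bipartite, $K_2 \times X$ is connected. Therefore Theorem \ref{thm:bipartite} applies to any vertex of $K_2 \times X$ whose eigenvalue support avoids $0$.

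To prove the first statement, I would fix a vertex $u$ of $X$ with $0\notin\supp_{\be_u}(X)$. By (\ref{spectimes1}),
\[
\supp_{\be_{(0,u)}}(K_2\times X)=\{\pm\lambda:\lambda\in\supp_{\be_u}(X)\},
\]
and since $-0=0$, the hypothesis forces $0\notin\supp_{\be_{(0,u)}}(K_2\times X)$. Applying Theorem \ref{thm:bipartite} to the bipartite graph $K_2\times X$ and the vertex $(0,u)$ yields that $(0,u)$ is not sedentary. The identical argument (equivalently, the symmetry of $K_2\times X$ swapping the two copies) gives the same conclusion for $(1,u)$.

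For the second statement, if $X$ is nonsingular, then $0\notin\spec(X)$, hence $0\notin\supp_{\be_u}(X)$ for every vertex $u$ of $X$. Since every vertex of $K_2\times X$ is of the form $(0,u)$ or $(1,u)$ for some $u\in V(X)$, the first statement shows that no vertex of $K_2\times X$ is sedentary, i.e., $K_2\times X$ is not sedentary.

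There is no real obstacle here: the proof is a direct combination of the formula for the eigenvalue supports of vertices in a bipartite double with the main result of Section \ref{sec:bip}. The only thing to be mindful of is the convention that sedentariness is defined for connected graphs, which is why the hypothesis that $X$ is non-bipartite (guaranteeing connectedness of $K_2\times X$) is included.
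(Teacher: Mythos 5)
Your proposal is correct and follows exactly the paper's own argument: use (\ref{spectimes1}) to transfer the condition $0\notin\supp_{\be_u}(X)$ to the copies in the (connected, bipartite) double $K_2\times X$, then apply Theorem \ref{thm:bipartite}, with the nonsingular case following immediately since $0\notin\spec(X)$ forces $0$ out of every eigenvalue support. No gaps; your added remarks on connectedness and the symmetry between the two copies are fine.
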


\begin{proof}
From (\ref{spectimes1}), $0\notin\supp_{\be_u}(X)$ if and only if $0\notin\supp_{\be_{(0,u)}}(K_2\times X)$. Applying Theorem \ref{thm:bipartite} yields the first statement. The second statement is immediate from the first.
\end{proof}

If $X=K_n$, then the above theorem recovers the fact that the bipartite double of $K_n$ for all $n\geq 3$ is not sedentary \cite[Corollary 35]{monterde2023new}. For the complete graph minus an edge $K_n\backslash e$, we have the next result.

\begin{corollary}
Let $n\geq 3$ and suppose $u$ and $v$ are vertices in $K_n\backslash e$ such that $u$ is an endpoint of $e$ but $v$ is not. Then $u$ is not sedentary in $K_n\backslash e$ and $K_2\times (K_n\backslash e)$, while $v$ is sedentary in $K_n\backslash e$ but not in $K_2\times (K_n\backslash e)$.
\end{corollary}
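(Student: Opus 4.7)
The approach is to reduce everything to the twin structure of $K_n\setminus e$ and then apply the transfer theorems of Section \ref{Sec:bipdoub}. Writing $e=\{a,b\}$, the pair $\{a,b\}$ is an independent twin set while $W=V(K_n\setminus e)\setminus\{a,b\}$ is a clique twin set of size $n-2$. Using formula (\ref{twins}) together with the $2\times 2$ quotient matrix $\bigl(\begin{smallmatrix}0 & n-2\\ 2 & n-3\end{smallmatrix}\bigr)$ of the equitable partition $\{\{a,b\},W\}$, I would read off $\spec(K_n\setminus e)=\{0,-1^{(n-3)},\lambda_+,\lambda_-\}$ where $\lambda_\pm=\tfrac{1}{2}\bigl((n-3)\pm\sqrt{(n+1)^2-8}\bigr)$; the surd is irrational because $(n+1)^2-8$ lies strictly between $(n-1)^2$ and $(n+1)^2$ for $n\ge 3$ and is easily seen to avoid $n^2$. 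Taking $\be_a-\be_b$ as a basis of the $0$-eigenspace and the twin differences $\be_w-\be_{w'}$ ($w,w'\in W$) as a basis of the $(-1)$-eigenspace, the two pieces of support data I need fall out: $\supp_{\be_u}=\{0,\lambda_+,\lambda_-\}$ with $(E_0)_{u,u}=\tfrac{1}{2}$, and $\supp_{\be_v}=\{-1,\lambda_+,\lambda_-\}$ with $(E_{-1})_{v,v}=\tfrac{n-3}{n-2}$ and $0\notin\supp_{\be_v}$ (the last because $\be_v\perp\be_a-\be_b$).

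To show $u$ is not sedentary in $K_n\setminus e$, I would apply Theorem \ref{thm:sed}(2) at $\theta=0$. An integer relation $\ell_+\lambda_++\ell_-\lambda_-=0$ separates, using irrationality of $\sqrt{(n+1)^2-8}$, into $(n-3)(\ell_++\ell_-)=0$ and $\ell_+-\ell_-=0$, forcing $\ell_+=\ell_-=0$, so the Kronecker-type condition holds vacuously and $u$ is not sedentary. For sedentariness of $v$ in $K_n\setminus e$ (in the clean regime $n\ge 5$), the twin set $W$ has size at least three, so Lemma \ref{monterde} applies directly; equivalently, $(E_{-1})_{v,v}=\tfrac{n-3}{n-2}>\tfrac{1}{2}$ and Theorem \ref{thm:sed}(1) applies.

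The two bipartite-double assertions now follow from Section \ref{Sec:bipdoub}; note that $K_n\setminus e$ is non-bipartite for $n\ge 4$, so $K_2\times(K_n\setminus e)$ is connected. For the copies of $u$, the equality case $(E_0)_{u,u}=\tfrac{1}{2}$ of Theorem \ref{0times} is exactly in play, and combined with the non-sedentariness of $u$ in $K_n\setminus e$ established in the previous paragraph, that theorem gives that the two copies of $u$ in $K_2\times(K_n\setminus e)$ are not sedentary. For the copies of $v$, the observation $0\notin\supp_{\be_v}(K_n\setminus e)$ is precisely the hypothesis of Theorem \ref{bipdoub}, which directly yields that the two copies of $v$ are not sedentary. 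The only real technical ingredient is the rank-$2$ irrational splitting used to handle $u$; once that is in hand, each of the four conclusions collapses to a single invocation of an already-proved result.
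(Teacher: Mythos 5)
Your proof is correct and, for the two bipartite-double claims, lands on exactly the same pair of results the paper uses (the equality case of Theorem \ref{0times} for the copies of $u$, and Theorem \ref{bipdoub} for the copies of $v$). Where you genuinely diverge is in showing $u$ is not sedentary in $K_n\backslash e$ itself: the paper simply cites an external result that $u$ is involved in pretty good state transfer (with the other endpoint of $e$) and invokes the mutual exclusivity of PGST and sedentariness, whereas you give a self-contained argument via the equitable-partition spectrum, the irrationality of $\sqrt{(n+1)^2-8}$, and the parity criterion of Theorem \ref{thm:sed}(2). Your route is longer but has the virtue of not importing anything outside the paper, and the spectral bookkeeping ($\supp_{\be_u}=\{0,\lambda_\pm\}$ with $(E_0)_{u,u}=\tfrac12$, $\supp_{\be_v}=\{-1,\lambda_\pm\}$ with $0\notin\supp_{\be_v}$) is exactly what both arguments need anyway. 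One further point in your favour: the paper's proof never addresses the clause ``$v$ is sedentary in $K_n\backslash e$'' at all, while you do, and your restriction to $n\geq 5$ is not mere caution --- for $n=3$ the vertex $v$ is the centre of $P_3$ and is not sedentary (it is in a part of size one of $K_{1,2}$), and for $n=4$ the two non-endpoints form an adjacent twin pair with $(E_{-1})_{v,v}=\tfrac12$, and the same Theorem \ref{thm:sed}(2) computation you ran for $u$ shows they admit PGST rather than being sedentary. So that clause of the corollary genuinely requires $n\geq 5$, and your Lemma \ref{monterde}/Theorem \ref{thm:sed}(1) argument covers precisely the range where it is true.
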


\begin{proof}
Note that $0\in\supp_{\be_u}(K_n\backslash e)$ with $(E_0)_{u,u}=\frac{1}{2}$. 
By \cite[Example 3]{Monterde2022}, $u$ is involved in PGST in $K_n\backslash e$, and so it is not sedentary. Applying the second statement in Theorem \ref{0times} yields the conclusion for vertex $u$. Since $0\notin\supp_{\be_v}(K_n\backslash e)$, the first statement of Theorem \ref{bipdoub} gives the result for vertex $v$.
\end{proof}

Since $C_{2n}=K_2\times C_n$, Theorem \ref{bipdoub} also gives us the following corollary.

\begin{corollary}
\label{cnodd}
Let $n$ be odd and $X$ be a weighted $C_n$. If $X$ is nonsingular, then the weighted $C_{2n}$ obtained from taking the bipartite double of $X$ is not sedentary.
\end{corollary}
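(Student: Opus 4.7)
The plan is to invoke Theorem~\ref{bipdoub} directly after checking that the hypotheses are met. Since $n$ is odd, the cycle $C_n$ is not bipartite, so the bipartite double $K_2\times C_n$ is connected. The standard identification $C_{2n}\cong K_2\times C_n$ when $n$ is odd follows from the fact that the tensor product with $K_2$ of an odd cycle unwinds into a single cycle of double length (for even $n$ one instead gets two disjoint copies of $C_n$, which is why we restrict to odd $n$). I would state this identification up front, either as a quick appeal to a known fact or with a one-line description of the explicit isomorphism sending $(i,u_j)\mapsto u_{2j+i}$ (indices mod $2n$), noting that the weighting on $C_{2n}$ is precisely the one inherited from $X$.

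Next I would observe that $X$ being nonsingular means $0\notin\spec(X)$, and in particular $0\notin\supp_{\be_u}(X)$ for every vertex $u$ of $X$. By the second statement of Theorem~\ref{bipdoub}, the bipartite double $K_2\times X$ is then not sedentary, i.e., every vertex of $K_2\times X$ is not sedentary. Under the identification with $C_{2n}$, this says precisely that the weighted $C_{2n}$ obtained from the bipartite double is not sedentary.

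There is no real obstacle here; the corollary is an immediate specialization of Theorem~\ref{bipdoub} to the case $X=C_n$ with $n$ odd, once the identification $C_{2n}\cong K_2\times C_n$ is recorded. The only point worth a sentence of care is explaining why the nonsingularity assumption is natural: it is exactly the hypothesis needed to guarantee $0\notin\supp_{\be_u}(X)$ for all $u$, which is what Theorem~\ref{bipdoub} requires in order to conclude non-sedentariness at every vertex of the double.
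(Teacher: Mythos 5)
Your proposal is correct and follows exactly the paper's route: identify $C_{2n}$ with $K_2\times C_n$ for odd $n$, note that nonsingularity of $X$ gives $0\notin\supp_{\be_u}(X)$ for every vertex $u$, and apply the second statement of Theorem~\ref{bipdoub}. The extra care you take in spelling out the isomorphism and why odd $n$ is needed for connectedness is fine but not a departure from the paper's argument.
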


\section{Subdivision}\label{Sec:subd}

Let $X$ be a weighted graph. A \textit{subdivision} $S(X)$ of $X$ is the weighted graph obtained from $X$ by replacing each edge $\{u,v\}$ of $X$ by the edges $\{u,w\}$ and $\{w,v\}$ whose weights are equal to that of $\{u,v\}$. Note that $S(X)$ is always bipartite and has exactly $|V(X)|+|E(X)|$ number of vertices.

Adapting the proofs of Theorems 4 and 5 in \cite{gervacio1990bipartite} yields the next two results.

\begin{lemma}
\label{unicyc}
Let $X$ be a connected graph. If $S(X)$ is nonsingular, then $X$ is a unicyclic weighted graph.
\end{lemma}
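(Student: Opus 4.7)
The plan is to exploit the bipartite structure of $S(X)$ together with a standard rank identity to force $X$ to have the same number of edges as vertices. Ordering the vertices of $S(X)$ with $V(X)$ first and the subdivision vertices second, the adjacency matrix takes the block form
$A(S(X))=\begin{bmatrix} O & B \\ B^T & O \end{bmatrix}$,
where $B$ is the weighted incidence matrix of $X$, namely $B_{u,e}=\omega_e$ if $u$ is an endpoint of $e$ in $X$ and $0$ otherwise. This block form is immediate from the subdivision construction, since each subdivision vertex $w_{uv}$ is adjacent in $S(X)$ exactly to $u$ and $v$, with edge weights inherited from $\omega_{uv}$.

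Next, I would invoke the identity $\rank A(S(X)) = 2\rank(B)$, which is standard for block antidiagonal real matrices. One quick verification is that a vector $\begin{bmatrix} \bx \\ \by \end{bmatrix}$ lies in $\ker A(S(X))$ precisely when $B\by = \zero$ and $B^T\bx = \zero$, so the nullity of $A(S(X))$ equals $(|V(X)|-\rank B)+(|E(X)|-\rank B)$. Equivalently, the nonzero eigenvalues of $A(S(X))$ come in $\pm$ pairs matching the singular values of $B$.

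Finally, if $S(X)$ is nonsingular then $2\rank(B) = |V(X)|+|E(X)|$. Combining this with the trivial bound $\rank(B) \leq \min\{|V(X)|, |E(X)|\}$ forces $|V(X)| = |E(X)|$. Since $X$ is connected, $|E(X)| = |V(X)|$ means that $X$ has cyclomatic number one, i.e., $X$ is unicyclic.

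The main content is the rank identity together with the inequality $\rank(B) \leq \min\{|V(X)|, |E(X)|\}$, both of which are routine, so I do not anticipate a significant obstacle. I would also note that this argument only establishes the stated implication; the converse is not claimed, and indeed invertibility of $B$ for a unicyclic $X$ depends on the edge weights (and, in the unweighted case, on the parity of the unique cycle), which is presumably the content of the next result adapted from \cite{gervacio1990bipartite}.
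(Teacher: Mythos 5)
Your proof is correct, and it takes a cleaner, more unified route than the paper's. The paper argues by excluding the two alternatives separately: if $X$ is a tree, then $|V(S(X))|=2|V(X)|-1$ is odd, and a bipartite graph on an odd number of vertices is singular because its nonzero eigenvalues pair up as $\pm\lambda$; if $X$ has at least two cycles, then the $|E(X)|$ subdivision vertices form an independent set of size exceeding $|V(S(X))|/2$, and eigenvalue interlacing forces $0$ into the spectrum. Your single argument --- writing $A(S(X))$ in the block antidiagonal form with $B$ the weighted vertex--edge incidence matrix, noting $\operatorname{rank}A(S(X))=2\operatorname{rank}B\leq 2\min\{|V(X)|,|E(X)|\}$, and concluding that nonsingularity forces $|V(X)|=|E(X)|$ --- subsumes both cases at once; it makes explicit the underlying principle (a weighted bipartite graph with parts of unequal size is singular) that the paper invokes only implicitly and via two different tools. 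Your closing remark is also on point: this direction needs no hypothesis on the weights or on the parity of the cycle, which is exactly where the subsequent theorem (the converse direction, via the nonsingularity of $S(C_p)$) does the remaining work. The only cosmetic caveat is that your $B$ is the $|V(X)|\times|E(X)|$ incidence matrix with $B_{u,e}=\omega_e$ for $u$ an endpoint of $e$, so the weight of an edge of $X$ appears in both of the two rows of its column; this is consistent with the subdivision assigning the same weight $\omega_{u,v}$ to both new edges, as you state.
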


\begin{proof}
Let $S(X)$ be nonsingular. Then $X$ is connected. To show that $X$ is unicyclic, we show that $X$ is neither a tree nor a graph with at least two cycles. If $X$ is a tree, then $|E(X)|=|V(X)|-1$, and so $|V(S(X))|=|V(X)|+|E(X)|=2|V(X)|-1$. Hence $|V(S(X))|$ is always odd, and so $S(X)$ has no perfect matching. By Corollary \ref{matching}, 0 is an eigenvalue of $A(S(X))$. Now, if $X$ has at least two cycles, then $|E(X)|<|V(X)|$, and so $|V(S(X))|=|V(X)|+|E(X)|>2|E(X)|$. Note that the $|E(X)|$ new vertices added to $X$ form an independent set in $S(X)$. Since $|E(X)|>\frac{|V(S(X))|}{2}$, 0 is an eigenvalue of $A(S(X))$ by interlacing. In both cases, we see that $S(X)$ is singular, a contradiction. Thus, $X$ must be unicyclic.
\end{proof}

In the next proof, $X\backslash V'$ denotes the subgraph of $X$ induced by $V(X)\backslash V'$ for some $V'\subseteq V(X)$.

\begin{theorem}
\label{subd}
Let $X$ be a connected weighted graph. Then $S(X)$ is nonsingular if and only if $X$ is a unicyclic graph with a weighted cycle $C_p$ as a subgraph such that $S(C_p)$ is nonsingular.
\end{theorem}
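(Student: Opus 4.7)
The plan is to isolate the theorem's content into a single reduction: if $X$ is a connected unicyclic weighted graph with unique cycle $C_p$, then $S(X)$ is nonsingular if and only if $S(C_p)$ is nonsingular. Granting this reduction, the forward direction of Theorem~\ref{subd} follows from Lemma~\ref{unicyc} (which forces $X$ to be unicyclic once $S(X)$ is nonsingular), while the backward direction is the reduction itself.

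The key auxiliary fact I would invoke is the standard pendant-vertex determinant identity: if $u$ is a pendant vertex of a weighted graph $G$ with unique neighbor $w$ and edge weight $a\neq 0$, then cofactor expansion of $\det(A(G))$ along the row of $u$ gives
\begin{equation*}
\det(A(G)) = -a^2 \det(A(G\setminus\{u,w\})),
\end{equation*}
so $G$ is nonsingular if and only if $G\setminus\{u,w\}$ is. This is a one-line calculation and needs no further argument.

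With this in hand, I would prove the reduction by induction on $|V(X)|-p$, the number of vertices of $X$ lying outside the cycle. The base case $X=C_p$ is tautological. In the inductive step, since $X$ is connected and unicyclic with at least one vertex outside $C_p$, it has a pendant vertex $u$; let $v$ be its unique neighbor in $X$ and let $w$ denote the subdivision vertex inserted on the edge $\{u,v\}$ in forming $S(X)$. Then $u$ is pendant in $S(X)$ with unique neighbor $w$, so the identity above gives that $S(X)$ is nonsingular if and only if $S(X)\setminus\{u,w\}$ is. A quick inspection of vertex and edge sets (with the same weights) yields $S(X)\setminus\{u,w\}=S(X\setminus u)$, and $X\setminus u$ is again a connected unicyclic graph with the same cycle $C_p$ and one fewer vertex off the cycle; the inductive hypothesis then closes the argument. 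The only real obstacle is the routine verification that $S(X)\setminus\{u,w\}$ coincides, as a weighted graph, with $S(X\setminus u)$, which is purely bookkeeping on vertex and edge labels. No additional spectral machinery beyond Lemma~\ref{unicyc} is needed.
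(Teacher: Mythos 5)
Your proposal is correct and follows essentially the same route as the paper: reduce via Lemma~\ref{unicyc} to the unicyclic case, then repeatedly strip a pendant vertex of $X$ together with the subdivision vertex on its unique edge, using the fact that this deletion preserves (non)singularity and that what remains is $S(X\setminus u)$. The only cosmetic difference is that you phrase the determinant reduction as a cofactor expansion where the paper uses an elementary row operation to reach block-triangular form.
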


\begin{proof}
By virtue of Lemma \ref{unicyc}, it suffices to show that for a connected unicyclic graph $X$ with a weighted cycle $C_p$ as a subgraph, $S(X)$ is nonsingular if and only if $S(C_p)$ is nonsingular. Let $X$ be such a graph. If $X=C_p$, then there is nothing to prove. So, suppose $X$ has a cycle $C_p$ as a subgraph and a pendent vertex. Let $u$ be a pendent vertex in $S(X)$ adjacent to vertex $v$ with degree two and let $w$ be the other neighbor of $v$. By definition of $S(X)$, the edges $\{u,v\}$ and $\{v,w\}$ have equal weights, say $\omega$. If we index the first, second and third rows of $A(S(X))$ by vertices $u$, $v$ and $w$ respectively, then we may write
\begin{equation*}
A(S(X))=\left[\begin{array}{c|c|ccc}
     0 & \omega &0&\zero&\\\hline
     \omega& 0&\omega&\zero& \\\hline
     0&\omega & &&\\
     &&&&\\
     \zero&\zero&&A(S(X)\backslash\{u,v\})&\\
     &&&&
\end{array}\right].
\end{equation*}
Suppose we subtract a $1/\omega$ multiple of the first row of $A(S(X))$ from its third row. This elementary row operation yields a matrix in block triangular form with one block equal to $A(S(X)\backslash\{u,v\})$. Thus, $\det A(S(X))\neq 0$ if and only if $\det A(S(X)\backslash\{u,v\})\neq 0$. Applying this argument exactly $|X\backslash V(C_p)|$ times starting with $A(S(X))$ successively removes pairs of edges in each component of $S(X)\backslash S(C_p)$, where each pair of edges comes from a subdivided edge in $X$. In particular, the last step in this inductive process allows us to deduce that $\det A(S(X))\neq 0$ if and only if $\det A(S(C_p))\neq 0$.
\end{proof}

For unweighted graphs, $S(C_p)$ is nonsingular if and only if $p$ is odd. Applying Theorem \ref{subd}, we get a result first proved in \cite[Theorem 7]{gervacio1990bipartite}.

\begin{corollary}
\label{subd1}
Let $X$ be a connected unweighted graph. Then $S(X)$ is nonsingular if and only if $X$ is a nonbipartite unicyclic graph.
\end{corollary}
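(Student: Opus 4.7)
The plan is to invoke Theorem~\ref{subd} and reduce the question to a single spectral computation about even cycles. By Theorem~\ref{subd}, for a connected unweighted graph $X$, $S(X)$ is nonsingular if and only if $X$ is unicyclic and the unique cycle $C_p$ of $X$ satisfies that $S(C_p)$ is nonsingular. So the only work is to characterize those integers $p\geq 3$ for which $S(C_p)$ is nonsingular in the unweighted case.

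First, observe that subdividing every edge of the unweighted cycle $C_p$ yields the unweighted cycle $C_{2p}$, i.e.\ $S(C_p)=C_{2p}$. The eigenvalues of the adjacency matrix of the unweighted cycle $C_n$ are well known to be $2\cos(2\pi k/n)$ for $k=0,1,\ldots,n-1$. Applying this with $n=2p$, the eigenvalues of $A(C_{2p})$ are $2\cos(\pi k/p)$ for $k=0,1,\ldots,2p-1$. Zero appears among these eigenvalues precisely when $\pi k/p=\pi/2+m\pi$ for some integer $m$, that is, when $p$ is even. Hence $S(C_p)$ is nonsingular if and only if $p$ is odd.

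To conclude, recall that a unicyclic graph $X$ is bipartite if and only if its unique cycle has even length; equivalently, $X$ is nonbipartite if and only if its unique cycle $C_p$ has $p$ odd. Combining this equivalence with the previous paragraph and Theorem~\ref{subd} yields: $S(X)$ is nonsingular iff $X$ is unicyclic with unique cycle of odd length iff $X$ is a nonbipartite unicyclic graph.

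The argument has no real obstacle; the only nontrivial ingredient is knowing the spectrum of the unweighted even cycle, which is standard. The one small point to be careful about is the translation between ``unicyclic with unique cycle $C_p$ of odd length'' and ``nonbipartite unicyclic,'' but this is immediate because any bipartite graph containing a cycle must contain an even cycle, and a unicyclic graph has only the one cycle to examine.
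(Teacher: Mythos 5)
Your proof is correct and takes essentially the same route as the paper: invoke Theorem~\ref{subd} and reduce to the fact that the unweighted $S(C_p)=C_{2p}$ is nonsingular if and only if $p$ is odd. The only difference is that you supply the (standard) spectral verification of that fact, which the paper simply asserts.
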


We now combine Corollary \ref{cor:intbipgraphs} with Theorem \ref{subd} and Corollary \ref{subd1}.

\begin{theorem}
\label{subdsed}
If $X$ is a connected weighted unicyclic graph with a weighted cycle $C_p$ as a subgraph such that $S(C_p)$ is nonsingular,
then $S(X)$ is not sedentary. In particular, if $X$ is a connected unweighted nonbipartite unicyclic graph, then $S(X)$ is not sedentary.
\end{theorem}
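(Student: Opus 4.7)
The plan is to assemble this theorem directly from the two principal ingredients already established earlier in the paper: Theorem \ref{subd}, which characterizes nonsingularity of the subdivision $S(X)$ of a connected weighted graph in terms of the nonsingularity of $S(C_p)$ for its unique weighted cycle $C_p$, and Corollary \ref{cor:intbipgraphs}, which asserts that any nonsingular weighted bipartite graph is not sedentary. The key structural observation is that $S(X)$ is always bipartite by construction: the $|E(X)|$ new degree-two vertices inserted on the edges of $X$ form an independent set whose neighbors all lie in the original vertex set $V(X)$, which immediately exhibits a bipartition of $V(S(X))$.

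Given this, the first assertion falls out in a single step. Under the stated hypothesis, Theorem \ref{subd} guarantees that $A(S(X))$ is nonsingular, and because $S(X)$ is a weighted bipartite graph, Corollary \ref{cor:intbipgraphs} applies directly and concludes that $S(X)$ is not sedentary.

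For the ``in particular'' clause, the approach is to invoke Corollary \ref{subd1}, which specializes Theorem \ref{subd} to the unweighted setting by noting that $S(C_p)$ is nonsingular if and only if $p$ is odd, equivalently $X$ is a nonbipartite unicyclic graph. Thus the very hypothesis of the corollary guarantees that $S(X)$ is nonsingular, and the same application of Corollary \ref{cor:intbipgraphs} yields the conclusion.

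There is no genuine obstacle in this argument; all of the substantive work has already been carried out in Lemma \ref{unicyc}, Theorem \ref{subd}, and the bipartite sedentariness results of Section \ref{sec:bip}. The only minor point to make explicit in the write-up is the observation that subdivisions are automatically bipartite, after which the proof reduces to a two-line invocation of the named results.
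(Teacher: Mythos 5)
Your proof is correct and follows exactly the route the paper intends: the theorem is stated immediately after the line ``We now combine Corollary \ref{cor:intbipgraphs} with Theorem \ref{subd} and Corollary \ref{subd1},'' and your argument (together with the observation, already made at the start of Section \ref{Sec:subd}, that $S(X)$ is always bipartite) is precisely that combination.
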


\begin{corollary}
\label{c6}
Let $X$ be a connected weighted unicyclic graph with a weighted cycle $C_3$ as a subgraph. Then $S(X)$ is a planar graph that is not sedentary.
\end{corollary}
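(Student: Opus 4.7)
The plan is to verify the two assertions separately: planarity of $S(X)$ and non-sedentariness of $S(X)$, with the latter reduced to the already-established Theorem \ref{subdsed}.

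For planarity, I would first note that a connected unicyclic graph is a tree with exactly one additional edge, which can always be drawn in the plane; hence $X$ is planar. Since $S(X)$ is obtained from $X$ by iteratively replacing edges with paths of length two (i.e., $S(X)$ is homeomorphic to $X$), and planarity is preserved under subdivision, $S(X)$ is planar as well.

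For the non-sedentariness, by Theorem \ref{subdsed} it suffices to exhibit a weighted cycle $C_p$ in $X$ such that $S(C_p)$ is nonsingular; by hypothesis $p = 3$ works if I can show that the weighted $S(C_3)$ is nonsingular for every assignment of nonzero edge weights. I plan to do this by direct determinant computation. Labeling the original vertices of $C_3$ as $1,2,3$ with weights $a,b,c$ on the edges $\{1,2\}$, $\{2,3\}$, $\{1,3\}$ respectively, and the subdivision vertices as $4,5,6$, the adjacency matrix of $S(C_3)$ is bipartite with biadjacency block
\begin{equation*}
B=\begin{pmatrix} a & 0 & c \\ a & b & 0 \\ 0 & b & c \end{pmatrix},
\end{equation*}
so $\det A(S(C_3)) = \pm(\det B)^2 = \pm(2abc)^2 = \pm 4a^2b^2c^2$. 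Since $a,b,c$ are all nonzero, $S(C_3)$ is nonsingular, and Theorem \ref{subdsed} applies to yield that $S(X)$ is not sedentary.

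I do not expect a real obstacle in this proof: the only nontrivial input, namely Theorem \ref{subdsed}, is already available, and the required verification reduces to a small explicit determinant. The one spot that deserves care is making sure that the subdivision $S(C_3)$ inherits the exact weight pattern prescribed by the definition of subdivision (the two halves of each subdivided edge share the original weight), which is exactly what gives the clean form of $B$ above.
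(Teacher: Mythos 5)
Your proposal is correct and follows the same route as the paper: reduce to Theorem \ref{subdsed} by checking that the weighted $S(C_3)$ is nonsingular, which the paper dismisses with ``one checks'' and you carry out explicitly via $\det B = 2abc \neq 0$ for the biadjacency block (a computation that is right). The planarity observation, which the paper leaves implicit, is also handled correctly.
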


\begin{proof}
Consider a weighted $C_3$ with edge weights $\alpha,\beta,\gamma\neq 0$. One checks that $S(C_3)$ is nonsingular for all $\alpha,\beta,\gamma\neq 0$. Hence, $S(X)$ is not sedentary by Theorem \ref{subdsed}.
\end{proof}

Note that if $T$ is a weighted tree, then adding an edge between any two vertices sharing a common neighbor results in a weighted unicyclic graph with a weighted cycle $C_3$ as a subgraph. One may then apply Corollary \ref{c6} to construct infinitely many planar subdivision graphs that are not sedentary.

\begin{remark}
Let $X$ be an unweighted graph. The following hold.
\begin{enumerate}
\item If $S(X)$ is nonsingular, then it is a bipartite graph that need not have a unique perfect matching, see the graph $X'$ in Example \ref{ex1}(1).
\item There are connected bipartite unicyclic graphs that are singular, see the graph $X$ in Example \ref{ex1}(1). However, such graphs are not subdivision graphs by Theorem \ref{subd}.
\end{enumerate}
\end{remark}

\section{Bipartite graphs with few distinct eigenvalues}\label{Sec:bip2}

In this section, we examine vertex sedentariness in connected unweighted bipartite graphs with $k\leq 5$ distinct eigenvalues. A complete characterization of vertex sedentariness is given for the case $k\in\{2,3,4\}$. On the other hand, an infinite family of graphs with all but one vertex that is sedentary is constructed for $k=5$. Combining this with Corollary \ref{subdstar}, we obtain two infinite families of graphs with either four or five distinct eigenvalues, one of which is not sedentary, while the other one is sedentary for all but one vertex.

First, note that $K_2$ is the only bipartite graph with two distinct eigenvalues, which we know is not sedentary. Next, connected unweighted bipartite graphs with three distinct eigenvalues are precisely the complete bipartite graphs $K_{m,n}$ \cite{van1998nonregular}. We characterize vertex sedentariness in $K_{m,n}$ below.

\begin{proposition}
Let $u$ be a vertex in an unweighted $K_{m,n}$ that belongs to a part of size $m$. Then vertex $u$ is sedentary if and only if $m\geq 3$.
\end{proposition}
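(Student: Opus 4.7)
The plan is to directly compute the three quantities $(E_{\sqrt{mn}})_{u,u}$, $(E_{-\sqrt{mn}})_{u,u}$, and $(E_0)_{u,u}$ for a vertex $u$ in the part of size $m$ of $K_{m,n}$, and then split into cases on $m$.

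First, I would recall/derive that $\spec(K_{m,n})=\{\sqrt{mn},\,0^{(m+n-2)},\,-\sqrt{mn}\}$. The unit Perron vector has the form $v_{+}=\frac{1}{\sqrt{2mn}}[\sqrt{n}\mathbf{1}_m;\sqrt{m}\mathbf{1}_n]$ and its bipartite partner $v_{-}=\frac{1}{\sqrt{2mn}}[\sqrt{n}\mathbf{1}_m;-\sqrt{m}\mathbf{1}_n]$ is a unit eigenvector for $-\sqrt{mn}$. A direct read-off yields $(E_{\pm\sqrt{mn}})_{u,u}=\frac{1}{2m}$ for $u$ in the part of size $m$, and hence $(E_0)_{u,u}=1-\frac{1}{m}=\frac{m-1}{m}$.

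With these values in hand, the three cases are each one-line arguments using machinery already established in the paper. If $m\geq 3$, then $(E_0)_{u,u}=\frac{m-1}{m}>\frac{1}{2}$, so Theorem~\ref{thm:sed}(1) applied with $\theta=0$ shows that $u$ is sedentary (in fact $|U(t)_{u,u}|\geq \frac{m-2}{m}$). If $m=1$, then $(E_0)_{u,u}=0$, so $0\notin\supp_{\be_u}$, and Theorem~\ref{thm:bipartite} immediately gives that $u$ is not sedentary.

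The only slightly delicate case is $m=2$, where $(E_0)_{u,u}=\frac{1}{2}$ and Theorem~\ref{thm:sed}(1) does not apply. Here I would appeal directly to Lemma~\ref{lem:bipartite}, which gives
\begin{equation*}
U(t)_{u,u}=(E_0)_{u,u}+2(E_{\sqrt{mn}})_{u,u}\cos(\sqrt{mn}\,t)=\tfrac{1}{2}\bigl(1+\cos(\sqrt{2n}\,t)\bigr).
\end{equation*}
This vanishes at $t=\pi/\sqrt{2n}$, so $\inf_{t>0}|U(t)_{u,u}|=0$ and $u$ is not sedentary. (Alternatively, one could notice that $u$ lives in a twin set of size $2$ and invoke Lemma~\ref{monterde} together with the explicit PGST computation above.) Combining the three cases proves the equivalence. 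I do not expect any real obstacle; the only care required is the clean identification of the three eigenvalue projection diagonals and recognizing that $m=2$ must be treated outside Theorem~\ref{thm:sed}(1).
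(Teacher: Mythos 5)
Your proof is correct, and it takes a more self-contained route than the paper. The paper disposes of $m=1$ and $m=2$ by citation (an explicit vanishing time for the star's centre from an earlier example, and perfect state transfer between the two vertices of the size-two part from another reference), and handles $m\geq 3$ via the twin-set machinery of Corollary~\ref{cor27}(1), which ultimately also rests on $(E_0)_{u,u}\geq\tfrac{2}{3}>\tfrac12$ and Theorem~\ref{thm:sed}(1). You instead compute the full spectral decomposition of $K_{m,n}$ once — $(E_{\pm\sqrt{mn}})_{u,u}=\tfrac{1}{2m}$, hence $(E_0)_{u,u}=\tfrac{m-1}{m}$ — and then all three cases fall out of results already proved in the paper: Theorem~\ref{thm:sed}(1) for $m\geq 3$, Theorem~\ref{thm:bipartite} for $m=1$ (where $(E_0)_{u,u}=0$ correctly forces $0\notin\supp_{\be_u}$ since $(E_0)_{u,u}=\norm{E_0\be_u}^2$), and the explicit formula $U(t)_{u,u}=\tfrac12\bigl(1+\cos(\sqrt{2n}\,t)\bigr)$ from Lemma~\ref{lem:bipartite} for $m=2$, which vanishes at $t=\pi/\sqrt{2n}$. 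What your version buys is independence from the two external references and an explicit sedentariness constant $\tfrac{m-2}{m}$; what the paper's version buys is brevity and consistency with its twin-set framework. Both are valid; no gaps.
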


\begin{proof}
If $m=1$, then $U(\frac{\pi}{2\sqrt{n}})_{u,u}=0$ by \cite[Example 11]{monterde2023new}. Meanwhile, if $m=2$, then vertex $u$ admits PST \cite[Theorem 12(1i)]{Monterde2022}. Combining this with Corollary \ref{cor27}(1) yields the desired result.
\end{proof}

A \textit{multiplicative design} is an incidence structure $\mathcal{S}=(V,\mathcal{B},I)$, with $|V|=v$, $|\mathcal{B}|=b$ such that the
incidence matrix $N$ of $\mathcal{S}$ (with rows and columns indexed by points and blocks, respectively) satisfies 
\begin{equation*}
NN^T=D+\bv\bv^T
\end{equation*}
for some diagonal matrix $D$, with positive diagonal entries, and some vector $\bv$ with real positive entries. If the matrix $D$ is a scalar matrix $dI$, then such a multiplicative design is said to be \textit{uniform}. Bipartite graphs with four distinct eigenvalues are precisely the incidence graphs of uniform multiplicative designs \cite{van2004combinatorial}. Combining this fact with Corollary \ref{evenevals} yields the following result.

\begin{corollary}
The unweighted incidence graphs of uniform multiplicative designs are not sedentary.
\end{corollary}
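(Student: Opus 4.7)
The plan is to derive this as an immediate consequence of \Cref{evenevals}, once we have translated the design-theoretic hypothesis back into a spectral one. The bridge is precisely the characterization cited just before the statement: connected unweighted bipartite graphs with exactly four distinct eigenvalues are the incidence graphs of uniform multiplicative designs (and vice versa). So once we know our incidence graph falls under that characterization, the number of distinct eigenvalues is pinned down to four.

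Concretely, I would argue as follows. Let $X$ be the incidence graph of a uniform multiplicative design $\mathcal{S}$. By construction $X$ is bipartite, with point-side and block-side forming the two color classes, and with incidence matrix $N$ playing the role of the bipartite block $B$ in $A(X)=\left[\begin{smallmatrix} O & N \\ N^T & O\end{smallmatrix}\right]$. Invoking the cited theorem of van Dam, $X$ has exactly $k=4$ distinct eigenvalues. Since $4$ is even, \Cref{evenevals} applies directly and yields that $X$ is not sedentary.

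The only thing to keep an eye on is that \Cref{evenevals} is stated for connected weighted bipartite graphs, so one should point out that $X$ being an incidence graph of a (nondegenerate) design ensures $X$ is connected, and that the unweighted assumption is a special case of the weighted one. There is essentially no real obstacle: the work is all hidden in \Cref{thm:bipartite} (via \Cref{cor:intbipgraphs} and \Cref{evenevals}), and the present statement is just a recognition lemma observing that uniform multiplicative designs furnish a rich combinatorial family fitting that spectral template.
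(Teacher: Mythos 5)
Your argument is exactly the paper's: invoke the van Dam characterization to conclude the incidence graph has four distinct eigenvalues, then apply \Cref{evenevals} since four is even. The remark about connectedness and the unweighted case being subsumed by the weighted one is a reasonable bit of extra care but does not change the route.
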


For an infinite family of bipartite graphs constructed from uniform multiplicative designs, see \cite{ryser1972symmetric}.

Next, we utilize the construction in \cite[Section 2]{van2014graphs} to build bipartite graphs with five distinct eigenvalues that contain sedentary vertices. Let $H$ be a graph on $n$ vertices which is the disjoint union of $f>1$ mutually non-isomorphic complete bipartite graphs, each with $e$ edges (note that $e$ is not prime, otherwise $f=1$, a contradiction). Let $G$ be the graph on $n+1$ vertices obtained from $H$ by adding a new vertex labeled $v$ that is joined to all vertices of one color class of each of the components of $H$. Then
\begin{equation}
\label{eq1}
\spec(G)=\left\{\lambda,\sqrt{e}^{(f-1)},0^{(n-2f+1)},-\sqrt{e}^{(f-1)}, -\lambda\right\},
\end{equation}
where $\lambda$ is the spectral radius of $A$. Since $\operatorname{tr}A^2=2|E(G)|$, we get $\lambda^2+e(f-1)=ef+\operatorname{deg}v$. Hence,
\begin{equation*}
\lambda=\sqrt{e+\operatorname{deg}v},
\end{equation*}
and so $G$ has exactly five distinct eigenvalues.

\begin{theorem}
\label{21}
Let $G$ be the unweighted bipartite graph constructed above with parts $B_1\cup\{v\}$ and $ B_2$. 
\begin{enumerate}
\item Let $H'$ be a component of $H$. If $u$ belongs to a part of $H'$ of size at least three, then $u$ is sedentary. In particular, if no component of $H$ is isomorphic to $K_{1,e}$ and $K_{2,e/2}$ (say, when $e$ is even), then every vertex in $G$, other than $v$, is sedentary.
\item Let $e\geq 4$ be even and $H'=K_{2,\frac{e}{2}}$. If $u$ belongs to a part of $H'$ of size two, then the following hold.
\begin{enumerate}
\item If $u\in B_1$, then $u$ is sedentary. 
\item If $u\in B_2$, then $u$ is not sedentary if and only if for all integers $a,b$, $a\lambda+b\sqrt{e}=0$ implies that $a+b$ is even. In particular, $u$ is not sedentary whenever $\lambda$ and $\sqrt{e}$ are linearly independent.
\end{enumerate}
\item We have $\supp_{\be_v}=\{\pm\lambda, 0\}$, and so vertex $v$ is not sedentary in $G$. 
\item Let $e\geq 4$ and $H'=K_{1,e}$. If $u$ is a vertex in $H'$ of degree $e$, then the following hold.
\begin{enumerate}
\item If $u\in B_1$, then $u$ is not sedentary whenever (i) $\lambda$ and $\sqrt{e}$ are integers with $\nu_2(\lambda)=\nu_2(\sqrt{e})$, or (ii) $\lambda$ and $\sqrt{e}$ are linearly independent over $\mathbb{Q}$.
\item If $u\in B_2$, then $u$ is not sedentary.
\end{enumerate}
\end{enumerate}
\end{theorem}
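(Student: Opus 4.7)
The proof is a case analysis: for each of the six sub-claims I would reduce to one of the general criteria established earlier, namely Lemma~\ref{monterde} (twin sets of size at least three), Theorem~\ref{thm:sed} (large diagonal entry of $E_\theta$), Theorem~\ref{thm:bipartite} ($0$ outside the support), or Corollary~\ref{cor18} (small $(E_0)_{u,u}$ together with few positive eigenvalues). The preparation common to all six cases is an explicit description of the spectral projections of $A(G)$.

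Label the color classes of each $H_i$ so that $B^1_i\subseteq B_1$ and $B^2_i\subseteq B_2$, with $|B^1_i|=a_i$ and $|B^2_i|=b_i$, so $a_ib_i=e$ and $\deg v=\sum_i b_i$. Seeking eigenvectors constant on each part with $v$-coordinate $c$, the relations $\sum_i b_i\beta_i=\mu c$, $b_i\beta_i=\mu\alpha_i$, and $c+a_i\alpha_i=\mu\beta_i$ yield: the simple Perron eigenvectors $\pm\lambda$; the $\pm\sqrt{e}$ eigenspaces (each of dimension $f-1$) with $c=0$, $\alpha_i=b_i\beta_i/\mu$, and the constraint $\sum_i b_i\beta_i=0$; and a single ``$v$-vector'' in the $0$-eigenspace with entries $(x_v,x_{B^1_i},x_{B^2_i})=(1,-1/a_i,0)$. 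The remainder of the $0$-eigenspace is the orthogonal direct sum of the subspaces of vectors supported on a single $B^k_i$ and orthogonal to $\mathbf 1$ there. Reading off the supports and projection diagonals gives $(E_0)_{v,v}=1/(1+\sum_i 1/a_i)$, $(E_0)_{u,u}=(1-1/a_i)+1/(a_i^2(1+\sum_j 1/a_j))$ for $u\in B^1_i$, and $(E_0)_{u,u}=1-1/b_i$ for $u\in B^2_i$; moreover $\pm\sqrt{e}\in\supp_{\be_u}$ for every non-$v$ vertex $u$, because $f\geq 2$ lets one choose a $\pm\sqrt{e}$-eigenvector that does not vanish at $u$ while still satisfying $\sum_j b_j\beta_j=0$.

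With this data in hand, each case is short. For part 1, vertices in a part of size $\geq 3$ of $H'$ share all of their neighbors in $G$ (just the opposite part of $H'$, possibly together with $v$), so they form a twin set of size $\geq 3$ and Lemma~\ref{monterde} applies; the ``in particular'' clause is immediate because excluding $K_{1,e}$ and $K_{2,e/2}$ forces both parts of every component to have size $\geq 3$. For part 2(a), $a_i=2$ and the formula above yields $(E_0)_{u,u}>1/2$, so Theorem~\ref{thm:sed}(1) applies. For part 2(b), $b_i=2$ gives $(E_0)_{u,u}=1/2$ exactly; substituting $a=\ell_\lambda-\ell_{-\lambda}$ and $b=\ell_{\sqrt{e}}-\ell_{-\sqrt{e}}$ in Theorem~\ref{thm:sed}(2) at $\theta=0$, and noting $\sum_\mu\ell_\mu\equiv a+b\pmod 2$, translates the criterion into the stated form. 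For part 3, the $\pm\sqrt{e}$-eigenvectors vanish at $v$ while the $v$-vector and the Perron vectors do not, giving $\supp_{\be_v}=\{\pm\lambda,0\}$; Lemma~\ref{lem:bipartite} then gives $U(\pi/\lambda)_{v,v}=2(E_0)_{v,v}-1\leq 0$, and continuity plus $U(0)_{v,v}=1$ forces $U(t)_{v,v}=0$ for some $t$. For part 4(a), $a_i=1$ gives $\supp_{\be_u}\cap\mathbb R^+=\{\lambda,\sqrt{e}\}$ and $(E_0)_{u,u}=1/(1+\sum_j 1/a_j)<1/2$ (automatic since $a_i=1$ forces $\sum_j 1/a_j>1$), so Corollary~\ref{cor18}(2) handles the two listed conditions directly. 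For part 4(b), $b_i=1$ forces the within-part subspace of $B^2_i$ to be zero-dimensional while $x_u=0$ in the $v$-vector, so $(E_0)_{u,u}=0$, i.e.\ $0\notin\supp_{\be_u}$, and Theorem~\ref{thm:bipartite} yields non-sedentariness.

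The main obstacle is the eigenspace bookkeeping, and in particular verifying $(E_0)_{v,v}\leq 1/2$ in part 3: this is equivalent to $\sum_i 1/a_i\geq 1$, which is not automatic from the combinatorics but appears to be implicit in the construction of \cite{van2014graphs} (and which, if violated, would actually make $v$ sedentary via Theorem~\ref{thm:sed}(1)). Once this is in hand (and the analogous bound in part 4(a), which is automatic from $a_i=1$), the six cases each reduce to one invocation of a previously established tool.
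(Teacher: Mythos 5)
Your proposal is correct and follows essentially the same route as the paper: explicit bookkeeping of the $0$- and $\pm\sqrt{e}$-eigenspaces (twin-difference vectors plus the single part-constant null vector supported on $B_1\cup\{v\}$), followed by one invocation of Lemma~\ref{monterde}, Theorem~\ref{thm:sed}, Theorem~\ref{thm:bipartite}, or Corollary~\ref{cor18} per case; your explicit formulas for the diagonal entries of $E_0$ agree with the quantities the paper computes more tersely. The one point where you hedge --- that part 3 needs $(E_0)_{v,v}\leq\frac12$, i.e.\ $\sum_i 1/a_i\geq 1$ --- is a genuine issue, and it is present in the paper's own argument as well: the paper concludes part 3 by citing Corollary~\ref{cor18}(1) without verifying its hypothesis $(E_0)_{v,v}<\frac12$, which can fail (e.g.\ if $v$ is attached to the smaller color class of each component, so that $\deg v<e$), in which case $\supp_{\be_v}=\{0,\pm\lambda\}$ together with $(E_0)_{v,v}>\frac12$ would make $v$ sedentary by Theorem~\ref{thm:sed}(1). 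So your caution is warranted rather than a defect relative to the paper; apart from recording that the construction must satisfy $\deg v\geq e$, your proof is complete.
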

\begin{proof}
Note that $G$ preserves twin vertices in $H$. Therefore, the first statement in 1 is immediate from Lemma \ref{monterde}. In particular, if no component of $H$ is isomorphic to $K_{1,e}$ and $K_{2,e/2}$, then each vertex of $G$ other than $v$ belongs to a twin set of size at least three. From this, the conclusion is immediate. 

To prove 2, let $H'=K_{2,\frac{e}{2}}$ for some even $e\geq 4$, and suppose  $u$ belongs to a part of $H'$ of size two. That is, $u$ belongs to a twin set of size two, and so by (\ref{twins}), we obtain $(E_0)_{u,u}=\frac{1}{2}+F_{u,u}$,
where $F_{u,u}\geq 0$. Note that $n-2f$ zero eigenvalues of $A(G)$ arise from the fact that $G$ preserves all twin vertices in $H$, each of which has associated eigenvector of the form $\be_a-\be_b$ where $a$ and $b$ belong to the same part of a component in $H$. Since 0 has multiplicity $n-2f+1$ by (\ref{eq1}), there is an eigenvector for 0 that is orthogonal to each $\be_a-\be_b$. This eigenvector will determine whether $F_{u,u}>0$ or $F_{u,u}=0$. For each $j\in\{1,\ldots, f\}$, let $H_j=K_{n_j,m_j}$ be the $j$th component of $H$ and assume that $v$ is adjacent to each vertex in the part of size $m_j$ for each $j$. That is, $n_1,\ldots,n_f$ are the sizes of the parts of the components of $H$ that belong to $B_1$. Now, suppose we index the rows of $A(G)$ by the $n_1$ vertices of $H_1$, followed by the $n_2$ vertices of $H_2$, and so on until the $n_f$ vertices of $H_f$, followed by $v$, followed by the $m_1$ vertices of $H_1$, followed by the $m_2$ vertices of $H_2$, and so on until the $m_f$ vertices of $H_f$. Set
\begin{equation*}
N=\left[\begin{array}{ccccc}
     J_{n_1,m_1}&O&\cdots&O\\
     O&J_{n_2,m_2}&\cdots& O\\
    \vdots&\vdots&\ddots&\vdots\\
     O&O&\cdots&J_{n_f,m_f}\\
    \mathbf{1}^T& \mathbf{1}^T&\cdots&\mathbf{1}^T 
\end{array}\right]\quad \text{and} \quad \mathbf{v}=\left[\begin{array}{c}
     \frac{1}{n_1}\mathbf{1}_{n_1}\\\frac{1}{n_2}\mathbf{1}_{n_2}\\    \vdots\\ \frac{1}{n_f}\mathbf{1}_{n_f} \\
    -1
\end{array}\right].
\end{equation*}
Then we may write $A(G)=\left[\begin{array}{cc}O&N\\ N^T&O \end{array}\right]$ and we have $A(G)\left[\begin{array}{cc}\mathbf{v}\\ \mathbf{0}\end{array}\right]=0$. Thus, $\left[\begin{array}{cc}\mathbf{v}\\ \mathbf{0}\end{array}\right]$ is an eigenvector for 0 that is orthogonal to each $\be_a-\be_b$. If $u\in B_1$, then $\be_u^T\left[\begin{array}{cc}\mathbf{v}\\ \mathbf{0}\end{array}\right]\neq 0$, and so $F_{u,u}>0$. Therefore, $(E_0)_{(u,u)}>\frac{1}{2}$ and so applying Theorem \ref{thm:sed}(1) yields 2a. Meanwhile, if $u\in B_2$, then $\be_u^T\left[\begin{array}{cc}\mathbf{v}\\ \mathbf{0}\end{array}\right]=0$, and so $F_{u,u}=0$. Thus, $(E_0)_{u,u}=\frac{1}{2}$, and applying Theorem \ref{thm:sed}(2) yields 2b. 

We now prove 3. For $\ell \in\{2,\ldots, f\}$, let $\bv_{\ell-1}$ be the vector whose $j$th entry is equal to 1 for each vertex $j$ in the part of size $n_1$ in $H_1$, equal to $-1$ for each vertex $j$ in the part of size $n_\ell$ in $H_\ell$, equal to $\frac{\sqrt{n_1}}{\sqrt{m_1}}$ for each vertex $j$ in the part of size $m_1$ in $H_1$, equal to $-\frac{\sqrt{n_\ell}}{\sqrt{m_\ell}}$ for each vertex $j$ in the part of size $m_\ell$ in $H_\ell$ and equal to 0 otherwise. Then $\{\bv_1,\ldots,\bv_{f-1}\}$ is a linearly independent set of eigenvectors associated with the eigenvalue $\sqrt{e}$, each of which has a zero at the entry indexed by $v$. This implies that $\sqrt{e}\notin\supp_{\be_v}$. Consequently, $\supp_{\be_v}=\{\pm\lambda, 0\}$, and so vertex $v$ is not sedentary in $G$ by Corollary \ref{cor18}(1). 

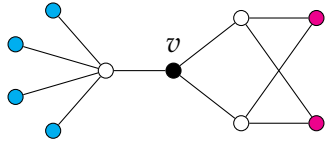
\begin{figure}[h!]
	\begin{center}
		\begin{tikzpicture}
		\tikzset{enclosed/.style={draw, circle, inner sep=0pt, minimum size=.2cm}}	   
	    \node[enclosed, fill=cyan] (a) at (0,0.35) {};
        \node[enclosed, fill=cyan] (b) at (0,-0.35) {};
        \node[enclosed, fill=cyan] (c) at (0.5,0.8) {};
        \node[enclosed, fill=cyan] (d) at (0.5,-0.8) {};
        \node[enclosed] (v) at (1.2,0) {};
        \node[enclosed, fill=black, label={above, yshift=0cm: $v$}] (u) at (2.1,0) {};
        \node[enclosed] (e) at (3,0.7) {};
        \node[enclosed] (f) at (3,-0.7) {};
        \node[enclosed, fill=magenta] (g) at (4,0.7) {};
        \node[enclosed, fill=magenta] (h) at (4,-0.7) {};
		\draw (a) -- (v);
		\draw (b) -- (v);
		\draw (c) -- (v);
		\draw (d) -- (v);
        \draw (e) -- (g);
        \draw (e) -- (h);
        \draw (f) -- (g);
        \draw (f) -- (h);
        \draw (u) -- (v);
        \draw (u) -- (e);
        \draw (u) -- (f);
		\end{tikzpicture}
	\end{center}
	\caption{The graph $G$ with vertex $v$ and a color class consisting of nonsedentary vertices marked white}\label{yay}
\end{figure}
Finally, we prove 4. If $u\in B_1$, then from the proof of 2, we have $0\in\spec(G)$. In particular, since $u$ belongs to a part of size $1$, we get $n_\ell=1$ for some $\ell$, and so   $(E_0)_{u,u}=\frac{1}{1+\sum_{j\neq \ell}\frac{1}{n_j}+1}<\frac{1}{2}$. By Corollary \ref{cor18}(2), 4a holds. If $u\in B_2$, then  $0\notin\spec(G)$ from the proof of 2, while $\pm\sqrt{e}\in\spec(G)$ from the proof of 3. Thus, $\supp_{\be_u}=\{\pm\lambda,\pm\sqrt{e}\}$, and so $u$ is not sedentary by Theorem \ref{thm:bipartite}. This proves 4b.
\end{proof}

If $e$ is odd and $H$ has no component isomorphic to a star, then all but one vertex of $G$ is sedentary by Theorem \ref{21}(1,3). We now give an example where $G$ has more than one vertex that is not sedentary.

\begin{example}
Let $G$ be the graph constructed above, where $H$ is a union of $K_{1,3}$ and $K_{2,2}$ (so $e=4$ and $f=2$), see Figure \ref{yay}. Then $\spec(G)=\left\{\sqrt{7},2,0^{6},-2,-\sqrt{7}\right\}.$ Blue and pink vertices are sedentary by Theorem~\ref{21}(1) and Theorem~\ref{21}(2a), respectively. However, the white vertex with degree five is not sedentary by Theorem~\ref{21}(4b). Moreover, the white vertices with degree three are also not sedentary by Theorem \ref{21}(2b), because $\lambda=\sqrt{7}$ and $\sqrt{e}=2$ are linearly independent over $\mathbb{Q}$. The black vertex is not sedentary by Theorem~\ref{21}(3).
\end{example}

We note that there are other families of bipartite graphs with five distinct eigenvalues. In \cite{li2024bipartite}, the authors characterized the family of unweighted bipartite graphs with eigenvalues $\{r,1^{(a)},0^{(b)},-1^{(a)},-r\}$ for some $r>1$. They showed that a graph in this family belongs to either one of five infinite families or a list of seven sporadic graphs. It would be interesting to characterize vertex sedentariness in these graphs. We suspect that the proof will employ similar techniques used in Corollary \ref{subdstar} and Theorem \ref{21}. In fact, subdivided stars are a subfamily of one of the five infinite families given in \cite{li2024bipartite}.

\section{Paths and cycles}\label{Sec:paths}

In this section, we show that unweighted paths and unweighted even cycles are not sedentary.

\begin{theorem}
\label{paths}
For all $n\geq 3$, the unweighted $P_{n}$ is not sedentary.
\end{theorem}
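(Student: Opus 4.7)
The plan is to split on the parity of $n$. If $n$ is even, then $P_n$ is a tree with the perfect matching $\{\{v_{2k-1},v_{2k}\}:1\leq k\leq n/2\}$, so Corollary~\ref{matching} gives that $P_n$ is not sedentary.

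If $n$ is odd, I will use the explicit spectral decomposition of the unweighted $P_n$: the eigenvalues are $\lambda_j=2\cos(j\pi/(n+1))$ with orthonormal eigenvectors $\phi_j(v)=\sqrt{2/(n+1)}\sin(vj\pi/(n+1))$ for $j=1,\ldots,n$. The zero eigenvalue occurs at $j_0=(n+1)/2$, and $\phi_{j_0}(v)=\sqrt{2/(n+1)}\sin(v\pi/2)$ vanishes at vertices of even position and is $\pm\sqrt{2/(n+1)}$ at vertices of odd position. Hence $(E_0)_{u,u}=0$ when $u$ has even position and $(E_0)_{u,u}=\tfrac{2}{n+1}$ when $u$ has odd position. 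For an even-position $u$, $0\notin\supp_{\be_u}$ and Theorem~\ref{thm:bipartite} applies immediately to give non-sedentariness.

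For an odd-position $u$, I split further on $n$. If $n=3$, then $(E_0)_{u,u}=1/2$ and the positive support is $\{\sqrt{2}\}$; the only integer relation $\ell_{\sqrt{2}}\sqrt{2}-\ell_{-\sqrt{2}}\sqrt{2}=0$ forces $\ell_{\sqrt{2}}=\ell_{-\sqrt{2}}$, yielding an even sum, and Theorem~\ref{thm:sed}(2) yields non-sedentariness. If $n\geq 5$, then $(E_0)_{u,u}=\tfrac{2}{n+1}<\tfrac{1}{2}$, and I plan to invoke Theorem~\ref{bip0} by choosing a subset $S\subseteq\supp_{\be_u}\cap\mathbb{R}^+$ with $\sum_{\lambda\in S}(E_\lambda)_{u,u}\geq \tfrac{1}{4}$ that is either a singleton (when a single $(E_\lambda)_{u,u}$ already exceeds $\tfrac{1}{4}$, as for small $n$) or $\mathbb{Q}$-linearly independent, so that Corollary~\ref{cor17}(i) applies.

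The main obstacle is producing such an $S$ uniformly for every odd $n\geq 5$ and every odd-position $u$. The positive support lies in the totally real cyclotomic field $K=\mathbb{Q}(\cos(\pi/(n+1)))$, of dimension $\phi(2(n+1))/2$. When the number of positive support elements exceeds $[K:\mathbb{Q}]$ — as in $P_{11}$, where $\lambda_1-\lambda_3-\lambda_5=0$ — the full positive support is $\mathbb{Q}$-linearly dependent and Corollary~\ref{cor18} does not apply directly. My strategy is to remove one low-mass eigenvalue from each independent rational relation, and argue that the resulting subset remains $\mathbb{Q}$-linearly independent while retaining at least $\tfrac{1}{4}$ of the total positive mass, which equals $\tfrac{n-1}{2(n+1)}\geq \tfrac{1}{3}$ for $n\geq 5$. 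Controlling this mass loss using bounds on the coefficients $\sin^2(uj\pi/(n+1))$ and the dimension of $K$ is the delicate step; once it is carried out, Theorem~\ref{bip0} yields non-sedentariness for all odd-position $u$ in odd $P_n$, completing the proof.
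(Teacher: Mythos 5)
Your handling of even $n$ (perfect matching plus Corollary~\ref{matching}) and of even-position vertices in odd $P_n$ ($0\notin\supp_{\be_u}$ plus Theorem~\ref{thm:bipartite}) matches the paper exactly, and your $n=3$ case via Theorem~\ref{thm:sed}(2) is fine. The problem is the case you yourself flag as ``the delicate step'': odd-position vertices in odd $P_n$ with $n\geq 5$. You never actually produce the $\mathbb{Q}$-linearly independent subset $S$ of $\supp_{\be_u}\cap\mathbb{R}^+$ carrying mass at least $\tfrac14$, and it is far from clear that one always exists. The positive eigenvalues $2\cos(j\pi/(n+1))$, $1\leq j\leq \tfrac{n-1}{2}$, span a $\mathbb{Q}$-vector space of dimension at most $\phi(2(n+1))/2$, so when $n+1$ is highly composite the number of independent rational relations is large: for $n+1=210$ there are at least $104-48=56$ relations among $104$ positive eigenvalues, so your pruning procedure must discard at least $56$ of them. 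Each discarded eigenvalue can carry mass up to $\tfrac{2}{n+1}$, and the total positive mass is only $\tfrac{n-1}{2(n+1)}<\tfrac12$; a priori the discarded mass can exceed what you are allowed to lose. You would need a quantitative argument that the low-mass representatives of the relations can always be chosen so that at least $\tfrac14$ survives (together with a proof that deleting one element per relation actually leaves an independent set), and neither is supplied. Moreover, the eigenvalue $\lambda_{(n+1)/2-j_0}$ corresponding to $j$ with $\cos(j\pi/(n+1))$ rational (e.g.\ $\lambda=1$ when $3\mid n+1$) forces integer elements into $S$, which interacts badly with condition (i) of Corollary~\ref{cor17}. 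So as written the argument has a genuine gap precisely where the theorem is hard.

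For comparison, the paper avoids this number-theoretic route entirely for odd-position vertices: it uses the closed form $U(t)_{u,u}=\tfrac{2}{n+1}+\tfrac{4}{n+1}\sum_j\cos\bigl(2t\cos(\tfrac{j\pi}{n+1})\bigr)\sin^2(\tfrac{uj\pi}{n+1})$, evaluates at the single time $t=\pi/\sqrt2$, and shows $U(\pi/\sqrt2)_{u,u}\leq 0$ by pairing the terms $A_j=\cos(\sqrt2\pi\cos(\tfrac{j\pi}{n+1}))$ and $B_j=\cos(\sqrt2\pi\sin(\tfrac{j\pi}{n+1}))$ and using the pointwise inequalities $\cos(\sqrt2\pi\cos x)\leq\cos(\sqrt2\pi\sin x)$ on $[0,\tfrac{\pi}{4}]$ and $\cos(\sqrt2\pi\sin x)+\cos(\sqrt2\pi\sin(\tfrac{\pi}{4}-x))\leq 0$. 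Continuity and $U(0)_{u,u}=1$ then give a zero of $U(t)_{u,u}$. If you want to salvage your approach you would need to either complete the mass-retention estimate or switch to a direct evaluation of this kind.
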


\begin{proof}
Label the vertices of $P_n$ by $\{1,\dots, n\}$, with consecutive vertices adjacent.
It suffices to consider the case when $n$ is odd by Corollary \ref{matching}. If $u$ is an even vertex in $P_{n}$, then $0\notin\supp_{\be_u}$, so the desired result is obtained by applying Theorem \ref{thm:bipartite}. On the other hand, if $u$ is an odd vertex in $P_n$, then Corollary 10.2.3 and Lemma 10.2.5 in \cite{Coutinho2021} gives us
\begin{equation*}
U(t)_{u,u}=\frac{2}{n+1}+\frac{4}{n+1}\sum_{j=1}^{\frac{n-1}{2}}\cos\left(2 t \cos\left(\frac{j\pi}{n+1}\right) \right)\sin^2\left(\frac{uj\pi}{n+1}\right).
\end{equation*}
So at $t=\frac{\pi}{\sqrt{2}}$, we have
\begin{equation}
\label{patheq}
\begin{split}
\frac{n+1}{4}U\left(\frac{\pi}{\sqrt{2}}\right)_{u,u}&=\frac{1}{2}+\sum_{j=1}^{\frac{n-1}{2}}\cos\left(\sqrt{2}\pi\cos\left(\frac{j\pi}{n+1}\right)\right)\sin^2\left(\frac{uj\pi}{n+1}\right).
\end{split}
\end{equation}
It is easy to check that $P_n$ is not sedentary for $n\in\{3,5,7\}$. Hence, suppose $n\geq 9$. First, assume that $\frac{n+1}{4}$ is an integer. Let $A_j=\cos\left(\sqrt{2}\pi\cos\left(\frac{j\pi}{n+1}\right)\right)$ and $B_j=\cos\left(\sqrt{2}\pi\sin\left(\frac{j\pi}{n+1}\right)\right)$. Since $u$ is odd and $A_{\frac{n+1}{2}-j}=B_j$, we may write  (\ref{patheq}) as
\begin{equation*}
\begin{split}
\frac{n+1}{4}U\left(\frac{\pi}{\sqrt{2}}\right)_{u,u}
&=\frac{1}{2}+\sum_{j=1}^{\frac{n+1}{4}-1}A_j\sin^2\left(\frac{uj\pi}{n+1}\right)+\cos\pi\sin^2\left(\frac{u\pi}{4}\right)+\sum_{j=\frac{n+1}{4}+1}^{\frac{n-1}{2}}A_j\sin^2\left(\frac{uj\pi}{n+1}\right)\\
&=\sum_{j=1}^{\frac{n-3}{4}}A_j\sin^2\left(\frac{uj\pi}{n+1}\right)+\sum_{j=1}^{\frac{n-3}{4}}A_{\frac{n+1}{2}-j}\sin^2\left(\frac{u(\frac{n+1}{2}-j)\pi}{n+1}\right)\\
&=\sum_{j=1}^{\frac{n-3}{4}}A_j\sin^2\left(\frac{uj\pi}{n+1}\right)+B_j\cos^2\left(\frac{uj\pi}{n+1}\right)=\sum_{j=1}^{\frac{n-3}{4}}(A_j-B_j)\sin^2\left(\frac{uj\pi}{n+1}\right)+\sum_{j=1}^{\frac{n-3}{4}}B_j.
\end{split}
\end{equation*}
Since $\cos\left(\sqrt{2}\pi\cos x\right)-\cos\left(\sqrt{2}\pi\sin x\right)\leq 0$ for all $0\leq x\leq \frac{\pi}{4}$, we get that $A_j-B_j\leq 0$ for each $j$. Therefore, $\sum_{j=1}^{\frac{n-3}{4}}\left((A_j-B_j)\sin^2\left(\frac{uj\pi}{n+1}\right)\right)\leq 0$. In order to prove that $\frac{n+1}{4}U\left(\frac{\pi}{\sqrt{2}}\right)_{u,u}\leq 0$, it suffices to show that $\sum_{j=1}^{\frac{n-3}{4}}B_j\leq 0$. Let $\zeta=\lfloor\frac{n+1}{\pi}\arcsin(\frac{1}{2\sqrt{2}})\rfloor\approx\lfloor 0.11527(n+1)\rfloor$. Since $n\geq 9$, we have $\zeta\geq 1$. Moreover, $B_j\leq 0$ whenever $j>\zeta+1$. Now, observe that
\begin{equation*}
\begin{split}
\sum_{j=1}^{\frac{n-3}{4}}B_j
&=\sum_{j=1}^{\zeta}B_j+\sum_{j=\zeta+1}^{\frac{n-3}{4}-\zeta}B_j+\sum_{j=\frac{n-3}{4}-\zeta+1}^{\frac{n-3}{4}}B_j\\
&=\sum_{j=1}^{\zeta}\cos\left(\sqrt{2}\pi\sin\left(\frac{j\pi}{n+1}\right)\right)+\sum_{j=\zeta+1}^{\frac{n-3}{4}-\zeta}B_j+\sum_{j=1}^{\zeta}\cos\left(\sqrt{2}\pi\sin\left(\frac{(\frac{n-3}{4}-j+1)\pi}{n+1}\right)\right)\\
&=\sum_{j=1}^{\zeta}\left(\cos\left(\sqrt{2}\pi\sin\left(\frac{j\pi}{n+1}\right)\right)+\cos\left(\sqrt{2}\pi\sin\left(\frac{\pi}{4}-\frac{j\pi}{n+1}\right)\right)\right)+\sum_{j=\zeta+1}^{\frac{n-3}{4}-\zeta}B_j.
\end{split}
\end{equation*}
Since $\sum_{j=\zeta+1}^{\frac{n-3}{4}-\zeta}B_j\leq 0$ and $\cos\left(\sqrt{2}\pi\sin x\right)+\cos\left(\sqrt{2}\pi\sin\left(\frac{\pi}{4}-x\right)\right)\leq 0$ for all $x\in\R$, it follows that $\sum_{j=1}^{\frac{n-3}{4}}B_j\leq 0$. Hence $\frac{n+1}{4}U\left(\frac{\pi}{\sqrt{2}}\right)_{u,u}\leq 0$, i.e., an odd vertex $u$ in $P_n$ is not sedentary. Finally, using the same argument and the fact that $B_j\leq -\frac{1}{2}$ whenever $j>\lfloor\frac{n+1}{\pi}\arcsin(\frac{1}{3\sqrt{2}})\rfloor$ yields the desired conclusion for the case when $\frac{n+1}{4}$ is not an integer.
\end{proof}

From Theorem \ref{paths}, all unweighted odd paths are not sedentary. This complements Proposition \ref{oddpath} which states that any odd path has a weighting that results in an end vertex being sedentary.

\begin{theorem}
\label{cn}
For all even $n\geq 4$, the unweighted $C_{n}$ is not sedentary.
\end{theorem}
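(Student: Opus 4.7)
The plan is to split into two cases by the residue of $n$ modulo $4$. If $n\equiv 2\pmod 4$, the eigenvalues $2\cos(2\pi k/n)$ of $C_n$ avoid $0$ (because $n/4\notin\mathbb{Z}$), so $C_n$ is a nonsingular bipartite graph and Corollary~\ref{cor:intbipgraphs} immediately yields that $C_n$ is not sedentary. The substantive case is therefore $n=4m$ with $m\geq 1$, for which my strategy is to compute $U(\pi/2)_{u,u}$ explicitly and show it is $\leq 0$; together with $U(0)_{u,u}=1$ and the fact that $U(t)_{u,u}$ is continuous and real-valued (Lemma~\ref{lem:bipartite}), the intermediate value theorem then yields a $t_0\in(0,\pi/2]$ with $U(t_0)_{u,u}=0$, witnessing that $u$, and hence every vertex of $C_{4m}$ by vertex transitivity, is not sedentary.

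For $n=4m$, the eigenvalues $\pm 2$ are simple, $0$ has multiplicity two, and each $\pm\lambda_k$ with $\lambda_k=2\cos(k\pi/(2m))$ for $k=1,\dots,m-1$ has multiplicity two. By vertex transitivity, $(E_\lambda)_{u,u}$ equals the multiplicity of $\lambda$ divided by $4m$, so substitution into Lemma~\ref{lem:bipartite} at $t=\pi/2$ gives
\begin{equation*}
U(\pi/2)_{u,u}=\frac{1}{2m}\Bigl[1+\cos\pi+2\sum_{k=1}^{m-1}\cos\!\bigl(\pi\cos\!\bigl(\tfrac{k\pi}{2m}\bigr)\bigr)\Bigr]=\frac{1}{m}\sum_{k=1}^{m-1}\cos\!\bigl(\pi\cos\!\bigl(\tfrac{k\pi}{2m}\bigr)\bigr),
\end{equation*}
with the convention that the empty sum at $m=1$ gives $U(\pi/2)_{u,u}=0$ directly. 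For $m\geq 2$ I would then pair the index $k$ with $m-k$, using $\cos((m-k)\pi/(2m))=\sin(k\pi/(2m))$, rewriting the sum as $\sum_{k=1}^{\lfloor (m-1)/2\rfloor}\bigl[\cos(\pi\cos\theta_k)+\cos(\pi\sin\theta_k)\bigr]$ (with $\theta_k=k\pi/(2m)\in(0,\pi/4)$) plus, when $m$ is even, the unpaired middle term $\cos(\pi\cos(\pi/4))=\cos(\pi/\sqrt{2})<0$ from $k=m/2$.

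The heart of the argument is the trigonometric inequality
\begin{equation*}
\cos(\pi\cos\theta)+\cos(\pi\sin\theta)\leq 0\qquad\text{for every }\theta\in[0,\pi/2],
\end{equation*}
strict on $(0,\pi/2)$, which I would prove via the sum-to-product identity
\begin{equation*}
\cos(\pi\cos\theta)+\cos(\pi\sin\theta)=2\cos\!\Bigl(\tfrac{\pi(\cos\theta+\sin\theta)}{2}\Bigr)\cos\!\Bigl(\tfrac{\pi(\cos\theta-\sin\theta)}{2}\Bigr).
\end{equation*}
Since $\cos\theta+\sin\theta\in[1,\sqrt{2}]$ on $[0,\pi/2]$, the first argument lies in $[\pi/2,\pi/\sqrt{2}]\subset[\pi/2,\pi]$, making that factor nonpositive; and since $|\cos\theta-\sin\theta|\leq 1$, the second argument lies in $[-\pi/2,\pi/2]$, making that factor nonnegative, with both factors becoming strict in $(0,\pi/2)$. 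Applying this inequality to every pair term (which is strict because each $\theta_k\in(0,\pi/4)$) and recalling the negative unpaired middle term in the even case gives $U(\pi/2)_{u,u}<0$ for all $m\geq 2$, and combined with the two already-handled settings ($m=1$ and $n\equiv 2\pmod 4$) this finishes the proof. The main obstacle is the sum-to-product inequality; its success rests on the fortunate fact that $\pi/\sqrt{2}<\pi$, which keeps the first cosine factor on the nonpositive branch of $[\pi/2,\pi]$.
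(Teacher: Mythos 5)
Your proof is correct and follows essentially the same route as the paper: the $n\equiv 2\pmod 4$ case is dispatched via nonsingularity of the bipartite cycle, and for $n\equiv 0\pmod 4$ you evaluate $U(\pi/2)_{u,u}$, pair terms, and invoke the inequality $\cos(\pi\cos\theta)+\cos(\pi\sin\theta)\leq 0$. Your symmetric pairing $k\leftrightarrow m-k$ is a little cleaner than the paper's split at $\lfloor n/6\rfloor$, and you supply a sum-to-product proof of the key trigonometric inequality that the paper only asserts.
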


\begin{proof}
Let $n=2k$. For odd $k$, $C_k$ is nonsingular. So Corollary \ref{cnodd} yields the desired conclusion. Suppose $k$ is even. Then $\lambda_{\frac{n}{4}}=0$ is an eigenvalue of $A(C_n)$ with multiplicity two.
For any vertex $u$ of $C_n$ and for all $t\in\R$, we have
\begin{center}
$\displaystyle U(t)_{u,u}=(2/n)+(2/n)\cos(2t)+(4/n)\sum_{1}^{(n/4)-1}\cos(\lambda_j t).$
\end{center}
When $t=\frac{\pi}{2}$, we obtain
\begin{equation}
\label{eqCn}
(n/4) U(\pi/2)_{u,u}=\sum_{j=1}^{(n/4)-1}\cos(\lambda_j(\pi/2)).
\end{equation}
Note that $\lambda_j\pi/2=\cos(2j\pi/n)\pi$. For $0<j<\frac{n}{4}$, we have $\cos(\lambda_j\pi/2)\leq 0$ if and only if $j\leq \lfloor\frac{n}{6}\rfloor$. Now, each $j\in\{\lfloor\frac{n}{6}\rfloor+1,\ldots,\frac{n}{4}-1\}$ can be written as $\frac{n}{4}-\ell$, where $\ell\in\{1,\ldots,\frac{n}{4}-\lfloor\frac{n}{6}\rfloor-1\}$. Thus, if $j>\lfloor\frac{n}{6}\rfloor$, then  $\lambda_j(\pi/2)=\pi\cos(2j\pi/n)=\pi\cos\left(\frac{2(n/4-\ell)\pi}{n}\right)=\pi\cos\left(\frac{\pi}{2}-\frac{2\ell\pi}{n}\right)=\pi\sin(2\ell\pi/n).$
This allows us to write (\ref{eqCn}) as
\begin{equation*}
\begin{split}
\frac{n}{4}U\left( \pi/2 \right)_{u,u}&=\sum_{j=1}^{\lfloor n/6\rfloor}\cos(\lambda_j(\pi/2))+\sum_{j=\lfloor n/6\rfloor+1}^{(n/4))-1}\cos(\lambda_j(\pi/2))\\
&=\sum_{j=1}^{\lfloor n/6\rfloor}\cos(\pi\cos(2j\pi/n))+\sum_{\ell=1}^{(n/4)-\lfloor n/6\rfloor-1}\cos(\pi\sin(2\ell\pi/n))\\
&=\sum_{j=1}^{(n/4)-\lfloor n/6\rfloor-1}[\cos(\pi\cos(2j\pi/n))+\cos(\pi\sin(2j\pi/n))]+\sum_{j=(n/4)-\lfloor n/6\rfloor}^{\lfloor n/6\rfloor}\cos(\pi\cos(2j\pi/n)).
\end{split}
\end{equation*}
Since $\cos(\pi\cos x)+\cos(\pi\sin x)\leq 0$ for all $x\in\R$ and $\cos(\pi\cos x)\leq 0$ for all $x\in[0,\pi/3]$, the above equation implies that $U(\pi/2)_{u,u}\leq 0$. That is, each vertex $u$ of $C_n$ is not sedentary.
\end{proof}

Since odd cycles are nonbipartite, our results in Section \ref{sec:bip} do not apply to this family. To complement the fact that $C_5$ is not sedentary, we make the following observation about a weighted $C_3$.

\begin{example}
Let $V(X)=\{1,2,3\}$ with edges $\{1,2\}$, $\{2,3\}$ and $\{1,3\}$ with corresponding weights $1$, $1$ and $\alpha\in\R$, respectively. 
Then 
\begin{center}
$\spec(X)=\big\{\theta=-\alpha,\lambda=\frac{1}{2}(\alpha-\sqrt{\alpha^2+8}),\eta=\frac{1}{2}(\alpha+\sqrt{\alpha^2+8})\big\}$. 
\end{center}
If $\alpha>0$, then  $(E_{\lambda})_{2,2}>\frac{1}{2}$, while if $\alpha<0$, then $(E_{\eta})_{2,2}>\frac{1}{2}$. Thus, if $\alpha\neq 0$, then vertex $2$ is sedentary.
\end{example}

Next, we complement Theorem \ref{cn} by providing a weighting for $C_{4k}$ that yields a sedentary vertex.

\begin{proposition}
\label{c4k}
Let $u,v,w$ be vertices of a weighted $C_{4k}$ such that $u$ and $v$ are adjacent, $v$ and $w$ are adjacent, and $\{u,v\}$ and $\{v,w\}$ have edge weights $\frac{1}{\alpha}$. If $|\alpha|>\sqrt{2k-1}$ and all other edge weights are one, then vertex $v$ is sedentary.  
\end{proposition}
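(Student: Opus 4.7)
The plan is to mirror the strategy of Proposition \ref{oddpath}: show that $0$ is an eigenvalue of the weighted adjacency matrix $A$ and compute $(E_0)_{v,v}$ directly from an orthogonal basis of $\ker A$, then apply Theorem \ref{thm:sed}(1) once the hypothesis $|\alpha|>\sqrt{2k-1}$ forces $(E_0)_{v,v}>\tfrac{1}{2}$.

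First, I would label $V(C_{4k})=\{1,2,\ldots,4k\}$ with consecutive vertices adjacent and take $v=1$, $u=2$, $w=4k$, so that the only edges of nonstandard weight are $\{1,2\}$ and $\{1,4k\}$, each with weight $1/\alpha$. Writing the system $A\mathbf{x}=\zero$ row by row, I would read off: from row $1$, $x_2+x_{4k}=0$; from row $2$, $x_3=-\tfrac{1}{\alpha}x_1$; from row $4k$, $x_{4k-1}=-\tfrac{1}{\alpha}x_1$; and the interior rows give the simple three-term recurrence $x_{j+1}=-x_{j-1}$ for $3\le j\le 4k-1$. This splits cleanly into the odd-indexed and even-indexed positions, so the nullspace is two-dimensional with orthogonal basis
\begin{equation*}
\mathbf{v}_1=\be_1-\tfrac{1}{\alpha}\sum_{j=1}^{2k-1}(-1)^{j+1}\be_{2j+1},\qquad \mathbf{v}_2=\sum_{j=1}^{2k}(-1)^{j+1}\be_{2j},
\end{equation*}
with $\mathbf{v}_1$ supported on the odd positions and $\mathbf{v}_2$ on the even positions. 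One checks that the boundary conditions at vertices $1$ and $4k$ are consistent with this pattern.

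Next, since $\mathbf{v}_1\perp\mathbf{v}_2$, the orthogonal projection onto $\ker A$ is $E_0=\tfrac{\mathbf{v}_1\mathbf{v}_1^T}{\|\mathbf{v}_1\|^2}+\tfrac{\mathbf{v}_2\mathbf{v}_2^T}{\|\mathbf{v}_2\|^2}$. Because $\mathbf{v}_2$ has a zero in coordinate $v=1$, only the first summand contributes to $(E_0)_{v,v}$, giving
\begin{equation*}
(E_0)_{v,v}=\frac{1}{\|\mathbf{v}_1\|^2}=\frac{1}{1+\frac{2k-1}{\alpha^2}}=\frac{\alpha^2}{\alpha^2+2k-1}.
\end{equation*}
The hypothesis $|\alpha|>\sqrt{2k-1}$ is exactly the condition $(E_0)_{v,v}>\tfrac{1}{2}$. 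Invoking Theorem \ref{thm:sed}(1) then yields that $v$ is sedentary.

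The only step that requires any real care is verifying the basis for $\ker A$ and checking the two boundary equations (at rows $1$ and $4k$); the alternation pattern $x_{j+1}=-x_{j-1}$ combined with $4k\equiv 0\pmod 4$ is what makes both boundary conditions automatically consistent, so the nullity is indeed $2$ rather than $1$. After that, everything reduces to a one-line computation of $\|\mathbf{v}_1\|^2$ and a direct appeal to Theorem \ref{thm:sed}(1), exactly paralleling Proposition \ref{oddpath}.
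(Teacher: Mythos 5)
Your proof is correct and follows essentially the same route as the paper: exhibit explicit null vectors of the weighted adjacency matrix (one supported on the positions containing $v$, one on the complementary positions), compute $(E_0)_{v,v}=\frac{\alpha^2}{\alpha^2+2k-1}$, and apply Theorem \ref{thm:sed}(1). The only difference is that you verify the nullity is exactly $2$ and so get equality where the paper settles for the lower bound $(E_0)_{v,v}\geq\frac{\alpha^2}{\alpha^2+2k-1}$, which already suffices.
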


\begin{proof}
Let $X$ be the weighted $C_{4k}$ in the assumption, and order the vertices so consecutive ones are adjacent. If we index the first three rows of $A(X)$ by $u,v,w$ respectively, then $\frac{1}{\sqrt{2k} }[1,0,-1,0,1,0,-1,0, \dots ]^T$ and $\frac{1}{\sqrt{\alpha^2+ 2k-1}}[0,\alpha,0,-1,0,1,0,-1, \dots ]^T$ are orthonormal eigenvectors associated with the eigenvalue 0 of $A(X)$. Thus, $(E_0)_{2,2}\geq \frac{\alpha^2}{\alpha^2+2k-1}$, and so $(E_0)_{2,2}>\frac{1}{2}$ whenever $|\alpha|>\sqrt{2k-1}$. Invoking Theorem \ref{thm:sed}(1) completes the proof.
\end{proof}

We close this section with an example.

\begin{example}
\label{ex1}
Let $G$ be a weighted $C_3$ with edge weights  $\alpha,\beta,\gamma\neq 0$ and $H$ be a weighted $C_4$ with edge weights  $\alpha,\beta,\gamma,\delta\neq 0$. The following hold for the graphs $X,X',Y$ and $Y'$ in Figure \ref{fig:cycle}.
\begin{enumerate}
\item Since $X=K_2\times G$ and $G$ is nonsingular for all $\alpha,\beta,\gamma\neq 0$, $X$ is not sedentary by Theorem \ref{bipdoub}. On the other hand, since $X'=S(G)$, Corollary \ref{c6} implies that $X'$ is not sedentary. 
\item Note that $Y$ is nonsingular if and only if $|\alpha\gamma|\neq |\beta\delta|$. Since $Y$ is bipartite, each vertex of $Y$ is not sedentary whenever $|\alpha\gamma|\neq |\beta\delta|$ by Corollary \ref{cor:intbipgraphs}. On the other hand, $Y'=S(H)$ and $A(Y')$ has nullity at least two for all $\alpha,\beta,\gamma,\delta\neq 0$. If $\beta=\gamma=\delta=1$, then by Proposition \ref{c4k}, the vertex of $Y'$ incident to the edges with weight $\alpha$ is sedentary when $|\alpha|<\frac{1}{\sqrt{3}}$.
\end{enumerate}
\end{example}

\begin{figure}
    \centering
    \begin{tikzpicture}
    		\tikzset{enclosed/.style={draw, circle, inner sep=0pt, minimum size=.2cm}}
		\node[enclosed] (la1) at (0.75,1) {};
		\node[enclosed] (lb1) at (0.25,0) {};
		\node[enclosed] (lc1) at (0.75,-1) {};
		\node[enclosed] (la2) at (2,1) {};
		\node[enclosed] (lb2) at (2.5,0) {};
		\node[enclosed] (lc2) at (2,-1) {};
                 \draw (la1)--node[left] {$\alpha$}(lb1);
                 \draw (lb1)--node[left] {$\beta$}(lc1);
                 \draw (la2)--node[right] {$\beta$}(lb2);
                 \draw (lb2)--node[right] {$\alpha$}(lc2);
                 \draw (la1)--node[above] {$\gamma$}(la2);
                 \draw (lc1)--node[below] {$\gamma$}(lc2);
    
		\node[enclosed] (a1) at (4.75,1) {};
		\node[enclosed] (b1) at (4.25,0) {};
		\node[enclosed] (c1) at (4.75,-1) {};
		\node[enclosed] (a2) at (6,1) {};
		\node[enclosed] (b2) at (6.5,0) {};
		\node[enclosed] (c2) at (6,-1) {};
                 \draw (a1)--node[left] {$\alpha$}(b1);
                 \draw (b1)--node[left] {$\gamma$}(c1);
                 \draw (a2)--node[right] {$\beta$}(b2);
                 \draw (b2)--node[right] {$\beta$}(c2);
                 \draw (a1)--node[above] {$\alpha$}(a2);
                 \draw (c1)--node[below] {$\gamma$}(c2);

        \node[enclosed] (m1) at (8.5,0.5) {};
		\node[enclosed] (m2) at (9.25,1.25) {};
		\node[enclosed] (m3) at (10.5,1.25) {};
		\node[enclosed] (m4) at (11.25,0.5) {};
		\node[enclosed] (m5) at (11.25,-0.5) {};
        \node[enclosed] (m6) at (10.5,-1.25) {};
        \node[enclosed] (m7) at (9.25,-1.25) {};
        \node[enclosed] (m8) at (8.5,-0.5) {};
        \draw (m1)--node[left] {$\alpha$}(m2);
        \draw (m2)--node[above] {$\beta$}(m3);
        \draw (m3)--node[right] {$\gamma$}(m4);
        \draw (m4)--node[right] {$\delta$}(m5);
        \draw (m5)--node[right] {$\alpha$}(m6);
        \draw (m6)--node[below] {$\beta$}(m7);
        \draw (m7)--node[left] {$\gamma$}(m8);
        \draw (m8)--node[left] {$\delta$}(m1);

        \node[enclosed] (n1) at (13,0.5) {};
		\node[enclosed] (n2) at (13.75,1.25) {};
		\node[enclosed] (n3) at (15,1.25) {};
		\node[enclosed] (n4) at (15.75,0.5) {};
		\node[enclosed] (n5) at (15.75,-0.5) {};
        \node[enclosed] (n6) at (15,-1.25) {};
        \node[enclosed] (n7) at (13.75,-1.25) {};
        \node[enclosed] (n8) at (13,-0.5) {};
        \draw (n1)--node[left] {$\alpha$}(n2);
        \draw (n2)--node[above] {$\alpha$}(n3);
        \draw (n3)--node[right] {$\beta$}(n4);
        \draw (n4)--node[right] {$\beta$}(n5);
        \draw (n5)--node[right] {$\gamma$}(n6);
        \draw (n6)--node[below] {$\gamma$}(n7);
        \draw (n7)--node[left] {$\delta$}(n8);
        \draw (n8)--node[left] {$\delta$}(n1);
                 
    \end{tikzpicture}
    \caption{The graphs $X$ (leftmost) and $X'$ (center left), which are weighted versions of $C_6$, and the graphs $Y$ (center right) and $Y'$ (rightmost), which are weighted versions of $C_8$}
    \label{fig:cycle}
\end{figure}
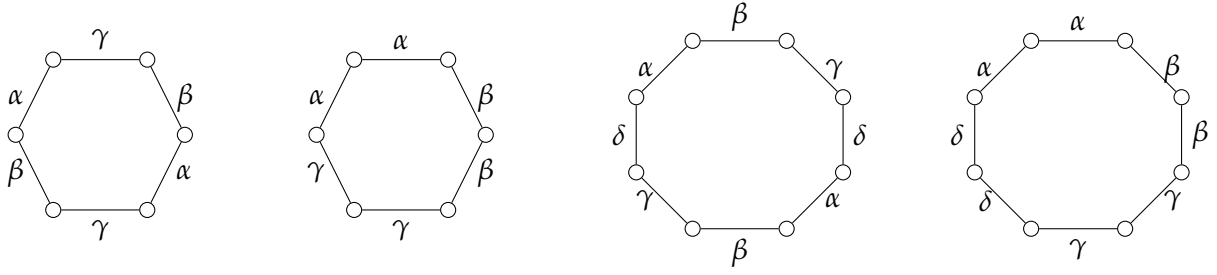

\section{More constructions}\label{Sec:cons1}

In this penultimate section, we give two constructions of graphs where it is possible to track the vertices that are sedentary.

\subsection*{Cartesian product}

Let $X$ and $Y$ be weighted graphs. The \textit{Cartesian product} of $X$ and $Y$, denoted $X\square Y$, is the graph with vertex set $V(X)\times V(Y)$ and adjacency matrix $A(X)\otimes A(Y)$, where $A\otimes B$ denotes the Kronecker product of matrices $A$ and $B$. From~\cite[Lemma 4.2]{Godsil2012a}, it is known that
\begin{center}
$U_{X\square Y}(t)=U_X(t)\otimes U_Y(t)\quad $ for all $t\in\R$.
\end{center}
Thus, if $u\in V(X)$ and $y\in V(Y)$, then
\begin{equation}
\label{cartprodeq}
|U_{X\square Y}(t)_{(u,v),(u,v)}|=|U_{X}(t)_{u,u}|\cdot|U_{Y}(t)_{v,v}|\quad \text{for all $t\in\R$}.
\end{equation}
The following is immediate from (\ref{cartprodeq}).

\begin{theorem}
\label{cp}
Vertex $(u,v)$ is sedentary in $X\square Y$ if and only if vertices $u$ and $v$ are sedentary in $X$ and $Y$, respectively.
\end{theorem}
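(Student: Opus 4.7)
The plan is to derive the result directly from the product structure in equation (\ref{cartprodeq}), treating the two implications separately since sedentariness is defined via an infimum of $|U(t)_{u,u}|$.

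For the forward direction, I would assume that $(u,v)$ is $C$-sedentary in $X \square Y$ for some $C > 0$. By (\ref{cartprodeq}), for every $t > 0$ we have
\[
C \;\leq\; |U_{X \square Y}(t)_{(u,v),(u,v)}| \;=\; |U_X(t)_{u,u}|\cdot|U_Y(t)_{v,v}|.
\]
Since $U_X(t)$ and $U_Y(t)$ are unitary, each of the two factors on the right is bounded above by $1$. Hence $|U_X(t)_{u,u}| \geq C$ and $|U_Y(t)_{v,v}| \geq C$ for all $t > 0$, which is exactly the statement that $u$ is $C$-sedentary in $X$ and $v$ is $C$-sedentary in $Y$.

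For the reverse direction, suppose $u$ is $C_1$-sedentary in $X$ and $v$ is $C_2$-sedentary in $Y$ for some $C_1, C_2 > 0$. Applying (\ref{cartprodeq}) again,
\[
|U_{X \square Y}(t)_{(u,v),(u,v)}| \;=\; |U_X(t)_{u,u}|\cdot|U_Y(t)_{v,v}| \;\geq\; C_1 C_2 \;>\; 0
\]
for all $t > 0$, so $(u,v)$ is $C_1 C_2$-sedentary in $X \square Y$.

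There is essentially no obstacle here: the statement really is immediate once one has the factorization $|U_{X \square Y}(t)_{(u,v),(u,v)}| = |U_X(t)_{u,u}|\cdot|U_Y(t)_{v,v}|$ and the observation that the diagonal entries of a unitary are at most $1$ in modulus, which is what allows the product bound to force each factor to be bounded away from zero. The only minor care needed is to track the constants, noting that the resulting sedentariness constant on the product is (at worst) the product of the constants on the factors.
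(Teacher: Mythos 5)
Your proof is correct and is exactly the argument the paper has in mind: the statement is declared ``immediate'' from the factorization $|U_{X\square Y}(t)_{(u,v),(u,v)}|=|U_X(t)_{u,u}|\cdot|U_Y(t)_{v,v}|$, and your two directions (bounding each factor below by $C$ using that diagonal entries of a unitary have modulus at most $1$, and taking the product $C_1C_2$ for the converse) are the natural way to spell that out.
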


Using our results in the previous sections, we can build bigger graphs using the Cartesian product operation by virtue of Theorem \ref{cp}. For example, Corollary \ref{cor:intbipgraphs} and Theorem \ref{thm:bipeveneval} combined with Theorem \ref{cp}, implies that if $X$ or $Y$ is a nonsingular weighted bipartite graph, then $X\square Y$ is not sedentary. In particular, if $X$ or $Y$ is a weighted bipartite graph with unique perfect matching, then $X\square Y$ is not sedentary.

Next, since $K_n$ is sedentary if and only if $n\geq 3$, the following is immediate from Theorem \ref{cp}.

\begin{corollary}
The Cartesian product of unweighted complete graphs is sedentary if and only if none of its factors is isomorphic to $K_2$.
\end{corollary}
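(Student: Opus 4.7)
The plan is to reduce the statement to the two-factor case of Theorem~\ref{cp} via induction on the number of factors, using the known behavior of $K_n$ on both sides.

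First I would observe that the Cartesian product is associative, so a product $K_{n_1}\square K_{n_2}\square\cdots\square K_{n_k}$ can be written as $K_{n_1}\square(K_{n_2}\square\cdots\square K_{n_k})$. A straightforward induction on $k$ using Theorem~\ref{cp} then yields that a vertex $(u_1,u_2,\ldots,u_k)$ is sedentary in the full product if and only if each $u_i$ is sedentary in the corresponding $K_{n_i}$. Consequently, the full product is sedentary (every vertex sedentary) precisely when every vertex of each factor $K_{n_i}$ is sedentary.

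For the forward direction (contrapositive), suppose some factor is $K_2$, say $K_{n_1}=K_2$. Since $K_2$ admits perfect state transfer between its two vertices, neither vertex of $K_2$ is sedentary (recall that sedentariness and PGST are mutually exclusive). By the equivalence above, no vertex of the Cartesian product is sedentary, so the product is not sedentary. For the reverse direction, assume every factor satisfies $n_i\geq 3$. As noted in the discussion following Definition~\ref{def}, $K_n$ is $(1-\tfrac{2}{n})$-sedentary for all $n\geq 3$, so every vertex of every $K_{n_i}$ is sedentary. By the two-factor equivalence applied iteratively, every vertex of the Cartesian product is sedentary, so the product is sedentary.

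There is no real obstacle here; the only thing to be careful about is the inductive step, where one must invoke Theorem~\ref{cp} with $X=K_{n_1}$ and $Y=K_{n_2}\square\cdots\square K_{n_k}$ and verify that sedentariness of a vertex of $Y$ reduces to sedentariness in each individual $K_{n_i}$ by the inductive hypothesis. Both directions then follow immediately, giving the claimed equivalence.
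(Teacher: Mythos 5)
Your proposal is correct and follows essentially the same route as the paper, which simply combines Theorem~\ref{cp} (applied iteratively across the factors) with the facts that $K_n$ is $\bigl(1-\tfrac{2}{n}\bigr)$-sedentary for $n\geq 3$ and that $K_2$ admits perfect state transfer and hence has no sedentary vertex. The paper states this as immediate; your write-up just makes the induction and the $K_2$ case explicit.
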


The next result shows that the grid and the torus are not sedentary. It is immediate from Theorems \ref{cp}, \ref{paths}, \ref{cn} and Corollary \ref{matching}.

\begin{corollary}
For all integers $n_1,\ldots,n_k\geq 2$, each vertex in the unweighted $P_{n_1}\square\cdots\square P_{n_k}$ is not sedentary. Moreover, for all even integers $n_1,\ldots,n_k\geq 2$, we have that
\begin{enumerate}
\item each vertex in the unweighted $C_{n_1}\square\cdots\square C_{n_k}$ is not sedentary, and
\item each vertex $P_{n_1}\square\cdots\square P_{n_k}$ is not sedentary for any assignment of edge weights.
\end{enumerate}
\end{corollary}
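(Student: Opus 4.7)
The plan is to reduce everything to a coordinate-wise application of Theorem \ref{cp} and then invoke the appropriate non-sedentariness result for each factor. The key point is that the ``only if'' direction of Theorem \ref{cp}, extended inductively, gives: if at least one coordinate $u_i$ is not sedentary in $X_i$, then $(u_1,\ldots,u_k)$ is not sedentary in $X_1\square\cdots\square X_k$. This is immediate from the multiplicative identity
\begin{equation*}
\bigl|U_{X_1\square\cdots\square X_k}(t)_{(u_1,\ldots,u_k),(u_1,\ldots,u_k)}\bigr|=\prod_{i=1}^k\bigl|U_{X_i}(t)_{u_i,u_i}\bigr|,
\end{equation*}
which follows by iterating (\ref{cartprodeq}). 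Thus, to force every vertex of the product to be non-sedentary, it suffices to check that every vertex of each factor is non-sedentary.

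For the first statement (unweighted grids), I would verify that every vertex of unweighted $P_n$ is non-sedentary for each $n\geq 2$. For $n\geq 3$ this is Theorem \ref{paths}. For $n=2$, we have $P_2=K_2$, which is bipartite with $\supp_{\be_u}=\{\pm 1\}$, so $0\notin\supp_{\be_u}$ and Theorem \ref{thm:bipartite} applies; alternatively, $P_2$ has a unique perfect matching and we invoke Corollary \ref{matching}. Hence every vertex of $P_{n_i}$ is non-sedentary for each $i$, and the product result follows.

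For the two parts of the even-integer statement, I would argue similarly. For part (1), Theorem \ref{cn} says every vertex of unweighted $C_n$ is non-sedentary whenever $n$ is even (with $n\geq 4$ so the cycle is defined as a simple graph), so the induction gives the conclusion for $C_{n_1}\square\cdots\square C_{n_k}$. For part (2), even paths $P_{n_i}$ carry a unique perfect matching, so Theorem \ref{thm:bipeveneval} (or equivalently Corollary \ref{matching} applied to this weighted tree) shows that every vertex of a weighted $P_{n_i}$ is non-sedentary for \emph{any} assignment of edge weights. Since this holds for each factor independently of the weighting chosen, the Cartesian product of weighted even paths has no sedentary vertex.

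There is essentially no main obstacle: the statement is a direct corollary of the multiplicativity in Theorem \ref{cp} combined with results already proved. The only small subtlety is bookkeeping: the first claim needs to handle $n=2$ separately (since Theorem \ref{paths} is stated for $n\geq 3$), and one should note that for the weighted even path statement the non-sedentariness of each factor is uniform in the weights, so the conclusion for the product also holds for any assignment of edge weights.
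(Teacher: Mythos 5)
Your proposal is correct and follows essentially the same route as the paper, which derives the corollary directly from the multiplicativity in Theorem \ref{cp} together with Theorem \ref{paths}, Theorem \ref{cn}, and Corollary \ref{matching}; your extra care with the $P_2=K_2$ case and with the uniformity in the edge weights only makes explicit what the paper leaves as ``immediate.''
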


The \textit{Hamming graph} $H(d,q)$ is a Cartesian product of $d$ copies of unweighted $K_q$. The \textit{Doob graph} $D(\ell,d)$ is the Cartesian product of $\ell$ copies of the unweighted Shrikhande graph and one copy of $H(d,4)$. Hamming graphs are known to be sedentary \cite{Monterde2023}. Below we show that Doob graphs are also sedentary.

\begin{corollary}
$H(d,q)$ is $\big(1-\frac{2}{q}\big)^d$-sedentary for all $d\geq 1$ and $q\geq 3$ with equality at $t=\frac{\pi}{q}$. Moreover, $D(\ell,d)$ is $\big(\frac{1}{2^{2\ell+d}}\big)$-sedentary for all $d,\ell\geq 1$, with equality at $t=\frac{\pi}{q}$. 
\end{corollary}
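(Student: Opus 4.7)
The plan is to reduce both statements to the Cartesian product formula (\ref{cartprodeq}) via Theorem \ref{cp}. Writing $\bv=(v_1,\ldots,v_k)$ for a vertex of a $k$-fold Cartesian product and iterating (\ref{cartprodeq}) gives
\begin{equation*}
|U_{H(d,q)}(t)_{\bv,\bv}| = |U_{K_q}(t)_{u,u}|^d \quad\text{and}\quad |U_{D(\ell,d)}(t)_{\bv,\bv}| = |U_{\op{Sh}}(t)_{s,s}|^{\ell}\cdot |U_{K_4}(t)_{w,w}|^d,
\end{equation*}
where $\op{Sh}$ denotes the Shrikhande graph. It therefore suffices to analyze the sedentariness of each single factor and check that all factors attain their minima at a common time.

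For $H(d,q)$, the paper already records after Definition \ref{def} that $K_q$ is $(1-2/q)$-sedentary with equality at $t=\pi/q$, so the product formula immediately yields the $(1-2/q)^d$-sedentariness claim with the same minimizing time. The real work is thus analyzing $\op{Sh}$, which is a strongly regular graph on $16$ vertices with spectrum $\{6,2^{(6)},-2^{(9)}\}$. Since $\op{Sh}$ is vertex-transitive and therefore walk-regular, each spectral idempotent has constant diagonal $(E_\lambda)_{s,s}=m_\lambda/16$, so (\ref{specdecU}) yields
\begin{equation*}
U_{\op{Sh}}(t)_{s,s} = \tfrac{1}{16}\bigl(e^{6it}+6e^{2it}+9e^{-2it}\bigr).
\end{equation*}
Expanding $256|U_{\op{Sh}}(t)_{s,s}|^2$ in terms of $\cos(4t)$ and $\cos(8t)$ and applying $\cos(8t)=2\cos^2(4t)-1$ should collapse the expression into a perfect square of the form $4(3\cos(4t)+5)^2$. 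This is minimized precisely when $\cos(4t)=-1$, i.e., at $t=\pi/4$, giving $|U_{\op{Sh}}(\pi/4)_{s,s}|=1/4$ as the global infimum.

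A parallel (shorter) computation gives $|U_{K_4}(t)_{w,w}|^2=(10+6\cos(4t))/16$, also minimized at $t=\pi/4$ with value $1/4$; this is consistent with the $K_q$ result specialized to $q=4$. Since both Cartesian factors in the Doob product attain their respective infima at the same time $t=\pi/4$, we obtain $|U_{D(\ell,d)}(\pi/4)_{\bv,\bv}|=(1/4)^{\ell}(1/2)^{d} = 1/2^{2\ell+d}$, which is simultaneously a lower bound at every $t>0$ and the value attained at $t=\pi/4$. The main obstacle is establishing that $t=\pi/4$ is a \emph{global}, rather than merely local, minimizer of $|U_{\op{Sh}}(t)_{s,s}|$; the perfect-square collapse above is the key technical ingredient that accomplishes this in one clean step. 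Without such a collapse, one would instead need to invoke Lemma \ref{lem:persed} (using that $\op{Sh}$ is periodic, with eigenvalue ratios in $\Q$) together with a more delicate case analysis on the period interval.
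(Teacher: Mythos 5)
Your proposal is correct and follows essentially the same route as the paper: reduce via the Cartesian product identity (\ref{cartprodeq}) to the factors, use the known $\bigl(1-\tfrac{2}{q}\bigr)$-sedentariness of $K_q$ at $t=\tfrac{\pi}{q}$, and check that the Shrikhande graph is $\tfrac14$-sedentary with equality at $t=\tfrac{\pi}{4}$. The paper simply asserts the Shrikhande computation with ``one checks,'' whereas you carry it out explicitly; your perfect-square collapse $256\lvert U_{\mathrm{Sh}}(t)_{s,s}\rvert^2 = 4(3\cos(4t)+5)^2$ is correct and does establish the global minimum at $t=\tfrac{\pi}{4}$ in one step.
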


\begin{proof}
Since $K_q$ is $(1-\frac{2}{q})$-sedentary for all $q\geq 3$ with equality at $t=\frac{\pi}{q}$, making use of (\ref{cartprodeq}) yields the first statement. One checks that the Shrikhande graph is $(\frac{1}{4})$-sedentary with equality at $t=\frac{\pi}{4}$. As $H(d,4)$ is $\big(\frac{1}{2^d}\big)$-sedentary with equality at $t=\frac{\pi}{4}$, again applying (\ref{cartprodeq}) yields the second statement.
\end{proof}

\subsection*{Rooted product}
Let $X$ be a weighted graph with vertices $\{v_1,\ldots,v_n\}$ and $Y$ be a disjoint union of weighted graphs $Y_1,\ldots,Y_n$, rooted at $y_1,\ldots,y_n$, respectively. The \textit{rooted product} $X(Y)$ of $X$ and $Y$ is the weighted graph obtained by identifying each vertex $v_i$ of $X$ with vertex $y_i$ of $Y_i$. If each $Y_i$ is isomorphic to a graph $H$, then we write $X(Y)=X(H)$. The rooted product and its spectra was first studied in \cite{godsil1978new}.

\begin{theorem}
\label{biprp}
Suppose $X$ is bipartite and $Y$ is a disjoint union of bipartite graphs $Y_1,\ldots,Y_n$, each with a unique perfect matching $M_i$ and rooted at vertices $y_1,\ldots,y_n$. Then $X(Y)$ is not sedentary for any assignment of edge weights.
\end{theorem}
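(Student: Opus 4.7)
The plan is to reduce to Theorem \ref{thm:bipeveneval}. Specifically, I would show that under the stated hypotheses, the rooted product $X(Y)$ is itself a bipartite graph with a unique perfect matching, and then invoke Theorem \ref{thm:bipeveneval} directly.

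First, bipartiteness of $X(Y)$ is essentially immediate: fix a proper $2$-coloring of $X$; for each $i$ take a proper $2$-coloring of $Y_i$ and swap its two colors if necessary so that the color assigned to the root $y_i$ agrees with the color of $v_i$. After the identifications $v_i=y_i$, every edge of $X$ is properly colored by the coloring of $X$ and every edge of $Y_i$ is properly colored by the (possibly swapped) coloring of $Y_i$, giving a proper $2$-coloring of $X(Y)$.

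Next, I would exhibit a perfect matching of $X(Y)$ and then show it is the only one. The candidate matching is $M:=M_1\cup\cdots\cup M_n$: since each $M_i$ is a perfect matching of $Y_i$ and every vertex of $X(Y)$ lies in exactly one $Y_i$ after the identifications, $M$ is a perfect matching. For uniqueness, suppose $M'$ is any perfect matching of $X(Y)$, and fix an index $i$. Note that $|V(Y_i)|$ is even (as $Y_i$ admits a perfect matching) and the only edges of $X(Y)$ incident to vertices of $V(Y_i)\setminus\{v_i\}$ are edges of $Y_i$. Hence if $v_i$ were matched in $M'$ along an edge of $X$, the $|V(Y_i)|-1$ remaining vertices of $Y_i$ would need to be perfectly matched using edges of $Y_i$ alone, which is impossible since $|V(Y_i)|-1$ is odd. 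Thus $v_i$ is matched in $M'$ along an edge of $Y_i$, and the restriction of $M'$ to $Y_i$ is a perfect matching of $Y_i$. By the uniqueness of $M_i$, this restriction equals $M_i$. Since this holds for every $i$ and no edge of $X$ is used, we conclude $M'=M$.

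Having established that $X(Y)$ is a bipartite graph with a unique perfect matching, Theorem \ref{thm:bipeveneval} gives at once that $X(Y)$ is not sedentary for any assignment of edge weights. The main (and really the only) obstacle is the uniqueness argument for the perfect matching; the parity trick above is what makes it go through cleanly, and it is exactly where the hypothesis that each $Y_i$ has its \emph{own} unique perfect matching (rather than, say, just a perfect matching) is used.
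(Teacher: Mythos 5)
Your proposal is correct and follows the same route as the paper: reduce to Theorem \ref{thm:bipeveneval} by observing that $X(Y)$ is bipartite with unique perfect matching $\bigcup_i M_i$. The paper asserts this in a single sentence, whereas you supply the (correct) details — the two-coloring and the parity argument forcing each root to be matched inside its own $Y_i$ — so the only difference is the level of detail.
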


\begin{proof}
By assumption, $X(Y)$ is bipartite with a unique perfect matching $\bigcup_i M_i$. Applying Theorem \ref{thm:bipeveneval}  yields the desired conclusion.
\end{proof}

If $Y$ is a union of copies of $K_2$, then Theorem \ref{biprp} gives us the next corollary.

\begin{corollary}
\label{(k2)}
If $X$ is bipartite, then $X(K_2)$ is not sedentary for any assignment of edge weights. In particular, if $T$ is a tree, then $T(K_2)$ is a tree that is not sedentary for any assignment of edge weights.
\end{corollary}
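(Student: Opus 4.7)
The plan is to verify that each copy of $K_2$ satisfies the hypotheses of Theorem \ref{biprp} and then specialize to trees. First I would observe that $K_2$ is trivially bipartite and its single edge is its unique perfect matching, so taking $Y$ to be the disjoint union of $n=|V(X)|$ copies of $K_2$, each rooted at one of its two vertices, meets all the conditions required in Theorem \ref{biprp}. Since $X$ is bipartite by hypothesis, Theorem \ref{biprp} then immediately yields that $X(K_2)$ is not sedentary under any assignment of edge weights, establishing the first assertion.

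For the second assertion, I would check that $T(K_2)$ remains a tree whenever $T$ is a tree on $n$ vertices. By definition of the rooted product, $T(K_2)$ is obtained from $T$ by attaching one new pendent vertex to each vertex of $T$. This adds $n$ new vertices and $n$ new edges without creating any cycle, since each new vertex has degree one. Hence $T(K_2)$ is connected with $2n$ vertices and $2n-1$ edges, so it is a tree. Since every tree is bipartite, the first part of the corollary applies to $T(K_2)$, giving that $T(K_2)$ is not sedentary for any assignment of edge weights.

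There is no real obstacle here beyond the two routine verifications above: that $K_2$ is bipartite with a unique perfect matching, and that attaching pendents to every vertex of a tree produces a tree. The substance of the corollary is simply recognizing that the hypotheses of Theorem \ref{biprp} are automatically satisfied when $Y$ is a disjoint union of copies of $K_2$, which in turn identifies a large and natural class of bipartite graphs (and, in particular, trees of the form $T(K_2)$) that are never sedentary regardless of the edge weights.
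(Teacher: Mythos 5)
Your proposal is correct and follows exactly the paper's route: the paper simply notes that taking $Y$ to be a union of copies of $K_2$ (each bipartite with its single edge as the unique perfect matching) puts $X(K_2)$ directly under the hypotheses of Theorem \ref{biprp}. Your additional verification that $T(K_2)$ is again a tree is a routine check the paper leaves implicit, but it is accurate.
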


\section{Future work}\label{Sec:open}

In this paper, we reported some recent progress on vertex sedentariness in quantum walks, with emphasis on bipartite graphs, trees, and planar graphs. In particular, we showed that almost all trees and almost all planar graphs contain sedentary vertices. This is in contrast to nonsingular bipartite graphs, which do not contain any sedentary vertex. In order to inspire future work on this topic, we present some open questions that deserve further exploration.

\begin{enumerate}
\item We are interested in a complete characterization of sedentariness in unweighted cycles. By virtue of Theorem \ref{cn}, it suffices to ask: which unweighted odd cycles are sedentary? More generally, it is desirable to characterize all weightings of cycles and odd paths that produce sedentary vertices.

\item Two natural questions arise from Theorem \ref{thm:bipeveneval}. First, is there an infinite family of nonbipartite graphs with a unique perfect matching that is sedentary for any assignment of edge weights? And second, amongst all unweighted bipartite graphs with at least two perfect matchings, which ones are nonsingular? How about those which are sedentary?

\item We also pose a question related to the previous one. Given a connected bipartite graph $X$ with $m\geq 2$ perfect matchings, what is the least number of edges that must be removed from $X$ in order to obtain a connected graph $X'$ with a unique perfect matching?

\item Theorems \ref{alltrees} and \ref{planar} motivate us to determine infinite families of planar graphs and trees that are not sedentary. Are twin-free trees generally not sedentary? We also ask: amongst all planar graphs whose every edge belongs to a cycle (such as maximal planar graphs and outerplanar graphs), which ones are not sedentary? 
\end{enumerate}

\noindent
\emph{Acknowledgement}: 
K.~Meagher is supported by NSERC Discovery Grant RGPIN-2025-04101. H.~Monterde is supported by the Pacific Institute for the Mathematical Sciences through the PIMS-Simons Postdoctoral Fellowship. 

\bibliographystyle{alpha}
\bibliography{mybibfile}
\end{document}